\newtheorem{thm}{Theorem}[section]
\newtheorem{cor}[thm]{Corollary}
\newtheorem{prop}[thm]{Proposition}
\newtheorem{lem}[thm]{Lemma}
\theoremstyle{remark}
\newtheorem{rem}[thm]{Remark}
\theoremstyle{definition}
\newtheorem{defn}[thm]{Definition}
\newtheorem{ex}[thm]{Example}
\DeclareMathOperator{\Conf}{Conf}
\DeclareMathOperator{\Top}{Top}
\DeclareMathOperator{\Sp}{Sp}
\DeclareMathOperator{\Ab}{Ab}
\DeclareMathOperator{\Mod}{Mod}
\DeclareMathOperator{\Ch}{Ch}
\DeclareMathOperator{\Hom}{Hom}
\DeclareMathOperator{\colim}{colim}
\DeclareMathOperator{\hocolim}{hocolim}
\DeclareMathOperator{\FI}{FI}
\DeclareMathOperator{\GI}{GI}
\newcommand{\tinytriangle}{{\includegraphics[height=1.5ex]{./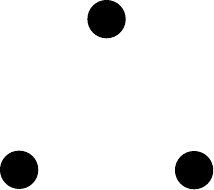}}}
\newcommand{\tinytriangleab}{{\includegraphics[height=1.5ex]{./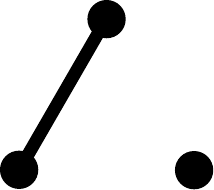}}}
\newcommand{\tinytriangleac}{{\includegraphics[height=1.5ex]{./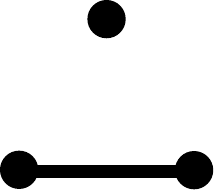}}}
\newcommand{\tinytrianglebc}{{\includegraphics[height=1.5ex]{./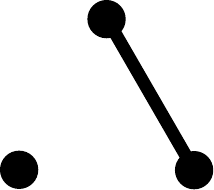}}}
\newcommand{\tinytriangleabac}{{\includegraphics[height=1.5ex]{./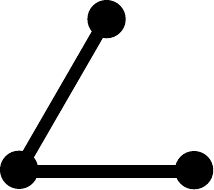}}}
\newcommand{\tinytriangleabbc}{{\includegraphics[height=1.5ex]{./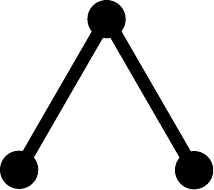}}}
\newcommand{\tinytriangleacbc}{{\includegraphics[height=1.5ex]{./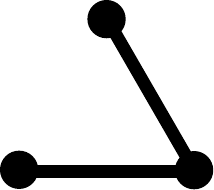}}}
\newcommand{\tinytriangleabacbc}{{\includegraphics[height=1.5ex]{./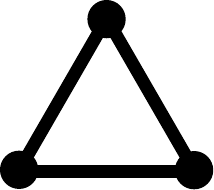}}}
\newcommand{\tinyintervalab}{{\includegraphics[height=1.5ex]{./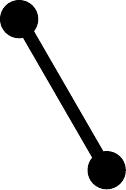}}}
\newcommand{\tinyinterval}{{\includegraphics[height=1.5ex]{./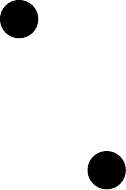}}}
\title{Configuration space in a product}
\author{John D. Wiltshire-Gordon}
\begin{document}
\maketitle
\begin{abstract}
Given a finite graph $\Gamma$ and a topological space $Z$, the graphical configuration space $\Conf(\Gamma, Z)$ is the space of functions $\mathcal{V}(\Gamma) \to Z$ so that adjacent vertices map to distinct points.  We provide a homotopy decomposition of $\Conf(\Gamma, X \times Y)$ in terms of the graphical configuration spaces in $X$ and $Y$ individually.  By way of application, we prove a stabilization result for homology of configuration space in $X \times \mathbb{C}^p$ as $p \to \infty$.  We also compute $H_\bullet \Conf(K_3,T)/T$, the integral homology of the space of ordered triples of distinct points in a torus $T=\mathbb{R}^r/\mathbb{Z}^r$ of rank $r$, where configurations are considered up to translation.  In \S\ref{sec:method}, we give an algorithm for computing homology of configuration space in a product of simplicial complexes.  The method is applied to products of some sans-serif capital letters in Example~\ref{ex:computation}.
\end{abstract}
\section{Introduction}
\noindent
Define the configuration space of $n$ distinct, labeled points in a topological space $Z$
$$
\Conf(n, Z) = \{(z_1, \ldots, z_n) \mbox{ such that $z_i = z_j \implies i=j$ } \}.
$$
Our aim is to understand the space $\Conf(n, Z)$ if $Z=X \times Y$ is a product of two spaces.  As a consequence of our analysis, we obtain the following homological stability result:
\begin{thm} \label{thm:stable}
If $X$ is a finite simplicial complex, and if $m, b \in \mathbb{N}$ are natural numbers,
$$
H_{mp + b} \Conf(n, X \times \mathbb{C}^p)
$$
stabilizes to a single abelian group as $p \to \infty$, and this group is $0$ if $m$ is odd.
\end{thm}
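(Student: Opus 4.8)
The plan is to apply the paper's homotopy decomposition to the complete graph $K_n$ and then to perform a degree count, exploiting the fact that the configuration homology of an even-dimensional Euclidean space lives only in degrees that are multiples of $2p-1$. Since $\Conf(n,Z)=\Conf(K_n,Z)$, I would first invoke the decomposition with $\Gamma=K_n$ and $Y=\mathbb{C}^p$. This presents $H_\bullet\Conf(n,X\times\mathbb{C}^p)$ as the output of a finite algebraic object (a direct sum, or the abutment of a spectral sequence) whose inputs are the groups $H_\bullet\Conf(\Gamma',X)$ and $H_\bullet\Conf(\Gamma'',\mathbb{C}^p)$ for subgraphs $\Gamma',\Gamma''\subseteq K_n$. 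Two structural features are all I need: since $K_n$ has only finitely many subgraphs, the $X$-side contributes a fixed, finite-dimensional graded group supported in a bounded range of degrees \emph{independent of $p$}; and the entire dependence on $p$ is funneled through the Euclidean factors $H_\bullet\Conf(\Gamma'',\mathbb{C}^p)$.

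Next I would analyze those Euclidean factors. Because $\mathbb{C}^p=\mathbb{R}^{2p}$ has even dimension $2p$, the cohomology $H^\bullet\Conf(\Gamma'',\mathbb{C}^p)$ is the Orlik--Solomon/Arnold algebra generated by one class of degree $2p-1$ per edge of $\Gamma''$, modulo the degree-homogeneous three-term relations. It is therefore free abelian and concentrated in degrees that are integer multiples of $2p-1$, and the rank of its piece in degree $j(2p-1)$ is a combinatorial quantity determined by $\Gamma''$ alone, independent of $p$ (the Poincaré polynomial of $\Conf(n,\mathbb{R}^d)$, namely $\prod_{k=1}^{n-1}(1+kt^{d-1})$, already exhibits this scaling for the complete graph). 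Dually $H_\bullet\Conf(\Gamma'',\mathbb{C}^p)$ is free abelian, concentrated in degrees $j(2p-1)$ with $p$-independent ranks. Freeness is what makes the argument clean: all Künneth maps in the decomposition are Tor-free, so each contribution to $H_\bullet\Conf(n,X\times\mathbb{C}^p)$ is an honest tensor product of an $X$-factor in some bounded degree $a$ with a $\mathbb{C}^p$-factor in some degree $j(2p-1)$, $0\le j\le\binom{n}{2}$.

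The theorem then drops out of a degree count. A class in total degree $mp+b$ must satisfy $a+j(2p-1)=mp+b$, that is, $(2j-m)p=b+j-a$. The right-hand side is bounded independently of $p$, since $a$ ranges over a fixed finite set and $j$ is bounded, whereas the left-hand side grows linearly in $p$ unless $2j=m$. Hence for all sufficiently large $p$ the only surviving contributions come from $j=m/2$ and $a=b+m/2$. If $m$ is odd there is no integer $j$ with $2j=m$, so the group vanishes; if $m$ is even, the stable group is $\bigoplus_{\Gamma'}H_{b+m/2}\Conf(\Gamma',X)\otimes(\text{the } p\text{-independent free part of degree }(m/2)(2p-1))$, which manifestly does not depend on $p$.

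I expect the genuine obstacle to be the first step rather than the bookkeeping: one must pin down the exact indexing and any internal differentials of the decomposition precisely enough that a single notion of total degree is well defined and preserved, and then verify that the only $p$-dependent degree shifts built into the decomposition are exactly the $j(2p-1)$ shifts coming from the Euclidean factors, with no residual $p$-dependence hiding in the $X$-side or in the shifts themselves. Once that bookkeeping is secured, finiteness of the index set and the $(2p-1)$-scaling of the Euclidean pieces force both stabilization and the vanishing for odd $m$.
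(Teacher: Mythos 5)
Your skeleton is the paper's own: decompose via Theorem \ref{thm:pointwise} with $Y=\mathbb{C}^p$, feed in the Orlik--Solomon / de Longueville--Schultz structure of $H_\bullet\Conf(-,\mathbb{C}^p)$ (free abelian, concentrated in degrees $j(2p-1)$, with $p$-independent combinatorics), and close with the degree count forcing $2j=m$; that count is essentially verbatim the end of the paper's proof of Theorem \ref{thm:stableformula}. But the step you defer to your last paragraph as ``the genuine obstacle'' is exactly the missing idea, and it does not follow from freeness. The functor $\mathbb{L}(U_n^{op})_!$ is a \emph{derived} Kan extension: its homology is not a direct sum of termwise tensor products over pairs $(\Gamma',\Gamma'')$, but a derived colimit over the poset $\mathcal{P}(K_n)$, carrying higher derived-colimit terms, differentials, and extension problems. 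Your degree count does kill interactions between distinct $j$-strata once $p$ is large, but it cannot kill differentials \emph{within} a stratum (those shift the auxiliary homological degree, not $j$), and nothing in your argument prevents that chain-level data from depending on $p$; indeed, without splitting the chain-level diagram there is no well-defined ``$j$-stratum'' of $C_\bullet^{sing}\Conf(-,\mathbb{C}^p)$ at all, only of its homology. The paper supplies precisely this input: the splitting criterion of Lemma \ref{lem:split} in \S\ref{sec:stable-proof} (a complex whose homology is concentrated in degrees $\{0,k,2k,\ldots\}$, each homology presheaf of projective dimension at most $k-1$, is quasi-isomorphic to its homology), combined with the bound that presheaves of free abelian groups on $\mathcal{G}(n)$ have projective dimension at most $\binom{n}{2}$. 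For $2p-1>\binom{n}{2}$ this yields the formality statement of Proposition \ref{prop:largep}, $C_\bullet^{sing}\Conf(-,\mathbb{C}^p)\simeq\bigoplus_i F_i[(2p-1)i]$, and only after this splitting does $\odot$ distribute to give $H_{mp+b}\Conf(n,X\times\mathbb{C}^p)\cong\bigoplus_i H_{(m-2i)p+b+i}(M_\bullet\odot F_i)$, with each $M_\bullet\odot F_i$ concentrated in degrees at most $\binom{n}{2}+nd$ independently of $p$.

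Two further corrections. First, your closed formula for the stable group, $\bigoplus_{\Gamma'}H_{b+m/2}\Conf(\Gamma',X)\otimes(\cdots)$, sums over the indexing poset instead of gluing over it, and is false: for $n=2$, $X=[0,1]$, $m=b=0$, the space $\Conf(2,X\times\mathbb{C}^p)$ is connected, so the stable group is $\mathbb{Z}$, whereas the direct sum you write down has rank at least $3$ however its index set is read (already $H_0\Conf(K_2,[0,1])=\mathbb{Z}^2$ contributes two summands). The correct stable value is $H_{m/2+b}(M_\bullet\odot F_{m/2})$, the homology of the derived Kan extension, as in Theorem \ref{thm:stableformula}. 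Second, your degree equation $a+j(2p-1)=mp+b$ omits the homological degree $s$ contributed by the derived colimit; the correct count is $a+s+j(2p-1)=mp+b$ with $0\le s\le\binom{n}{2}$. Since $s$ is bounded independently of $p$ the conclusion survives, but that bound (again a projective-dimension statement over $\mathcal{G}(n)$) must be part of the argument; and for the stable group to be $p$-independent as a \emph{group}, not merely in rank, you need the isomorphism of presheaves $H_{j(2p-1)}\Conf(-,\mathbb{C}^p)\cong F_j$, functorial in the graph, which is what \cite{dLS01} provides and the paper explicitly invokes.
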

\noindent
For example, writing $\textsf{Y}$ for the cone on three points,
\begin{align*}
H_{2p + 1} \Conf(3, \textsf{Y} \times \mathbb{C}^p) &= \mathbb{Z}^3 \\
H_{4p + 1} \Conf(3, \textsf{Y} \times \mathbb{C}^p) &= \mathbb{Z}^{10},
\end{align*}
for all $p > 2$.  In Theorem \ref{thm:stableformula}, we provide a bound for stabilization and a formula for the limiting groups.

\begin{rem}
Theorem \ref{thm:stable} is already known if $X$ is a manifold by \cite[Remark 2.3]{CT78}. 
\end{rem}

We give one more application before attending to the main results.  
Let $T = \mathbb{R}^r/\mathbb{Z}^r$ be a real torus of rank $r \geq 2$, and write $\Conf(3, T)/T$ for the configuration space of ordered triples in $T$ considered up to simultaneous translation.
\begin{thm} \label{thm:torus-mod-rotation}
The groups $H_p (\Conf(3, T)/T)$ are torsion-free for all $p$.  The top non-vanishing 
Betti number is $\beta_{2r-2} =  r \cdot (r + 3)/2$, 
and in all lower degrees the Betti numbers are given by the formula
$$
\beta_p = \binom{2 r}{p}-3 \binom{r}{p-r}.
$$
\end{thm}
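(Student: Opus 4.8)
The plan is to remove the translation symmetry by passing to difference coordinates. Writing a point of $\Conf(3,T)$ as $(z_1,z_2,z_3)$ and setting $u=z_2-z_1$, $v=z_3-z_1$, the assignment $(z_1,z_2,z_3)\mapsto(z_1;u,v)$ is a translation-equivariant homeomorphism $\Conf(3,T)\cong T\times U$, where the first factor records $z_1$ and
$$
U=\{(u,v)\in T\times T : u\neq 0,\ v\neq 0,\ u\neq v\}.
$$
Hence $\Conf(3,T)/T\cong U$, and I am reduced to computing $H_\bullet(U)$. The space $U$ is the complement in the closed oriented $2r$-manifold $M=T\times T$ of the union $S=A\cup B\cup C$ of the three codimension-$r$ subtori $A=\{u=0\}$, $B=\{v=0\}$, $C=\{u=v\}$. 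A direct check shows that $A,B,C$ meet pairwise (and triply) only at the origin, so $S$ is a one-point union $S\cong T^r\vee T^r\vee T^r$; in particular its cohomology is transparent.

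First I would apply Poincar\'e--Lefschetz duality $H_p(U)\cong H^{2r-p}_c(U)$ together with the long exact sequence of the open--closed decomposition $M=U\sqcup S$,
$$
\cdots\to H^{j-1}(S)\to H^{j}_c(U)\to H^{j}(M)\xrightarrow{\ \rho^{j}\ } H^{j}(S)\to\cdots,
$$
in which $\rho^{j}$ is restriction. Since $H^\bullet(M)$ and $H^\bullet(S)$ are free, this expresses $H^{j}_c(U)$ as an extension of $\ker\rho^{j}$ by $\operatorname{coker}\rho^{j-1}$, so the entire problem comes down to understanding the single restriction map $\rho\colon H^\bullet(T^{2r})\to H^\bullet(T^r)^{\oplus 3}$.

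The key step is to compute $\rho$ explicitly from the exterior-algebra model $H^\bullet(M)=\Lambda(a_1,\dots,a_r,b_1,\dots,b_r)$, where the $a_i$ come from the $u$-torus and the $b_i$ from the $v$-torus. Restriction to $A$ kills the $a_i$, restriction to $B$ kills the $b_i$, and restriction to the diagonal $C$ sends both $a_i$ and $b_i$ to the same generator $c_i$. On a monomial $a_S b_{S'}$ the first two maps vanish unless the monomial is ``pure'', while the $C$-component is $\pm c_{S\cup S'}$ when $S\cap S'=\varnothing$ and $0$ otherwise, because $c_i^2=0$. From this I would read off that $\rho^{j}$ is surjective for $j\ge 2$ with $\dim\ker\rho^{j}=\binom{2r}{j}-3\binom{r}{j}$ (for $2\le j\le r$), that $\rho^{1}$ is injective with cokernel of rank $r$, and that $H^{j}(S)=0$ forces $\ker\rho^{j}=H^{j}(M)$ for $j>r$. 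Feeding this into the duality/LES bookkeeping and reindexing by $p=2r-j$ (using $\binom{r}{2r-p}=\binom{r}{p-r}$) produces the stated formula $\beta_p=\binom{2r}{p}-3\binom{r}{p-r}$; the rank-$r$ cokernel of $\rho^{1}$ contributes only in degree $p=2r-2$, which is exactly what promotes the top Betti number from $r(r+1)/2$ to $r(r+3)/2$.

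I expect two points to demand the most care. The first is the precise kernel count: the crucial and slightly subtle phenomenon is that the ``overlapping'' mixed monomials $a_S b_{S'}$ with $S\cap S'\neq\varnothing$ all lie in $\ker\rho$ (they die on $A$, on $B$, and on $C$), and it is these that produce the correction term $3\binom{r}{p-r}$; getting the combinatorics right, together with the boundary degrees $j=1$, $j=r$, and $j>r$, is where the real work lies. The second is torsion-freeness: I would verify that each $\operatorname{coker}\rho^{j}$ is free abelian -- the only nonzero one, $\operatorname{coker}\rho^{1}$, is visibly $\cong\mathbb{Z}^r$ after eliminating the $a_i$- and $b_i$-coordinates against the $c_i$-coordinates -- so that every extension in the long exact sequence splits and $H_\bullet(U)$ is free, as claimed.
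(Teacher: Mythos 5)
Your argument is correct, and it takes a genuinely different route from the paper. The paper proves this theorem as a showcase of its main machinery: it constructs, from a thirteen-set good cover of a hexagonal fundamental domain, a cofibrant chain model for the functor $\Conf(-,\mathbb{R}/\mathbb{Z})/(\mathbb{R}/\mathbb{Z})$ over $\mathcal{G}(3)^{op}$, prunes it (Lemma \ref{lem:prune}) to a sum of three small complexes $A_\bullet \oplus B_\bullet \oplus C_\bullet$, and then iterates the product decomposition of Theorem \ref{thm:pointwise}: the $r$-fold product $M_\bullet^{\odot r}$ is governed by the multiplication table of Lemma \ref{lem:multiplication}, encoded in a $4\times 4$ transfer matrix over $\mathbb{N}[s]$ whose $(r-1)$-st power yields a generating function $\sum_p \beta_p s^p$, and the closed formula follows by induction on $r$. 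You instead kill the translation action with difference coordinates, identify $\Conf(3,T)/T$ with the complement $U$ of a union $S$ of three codimension-$r$ subtori of $T^{2r}$ meeting pairwise only at the origin, and compute $H_p(U)\cong H^{2r-p}_c(U)$ via Poincar\'e duality and the long exact sequence tying $H^\bullet_c(U)$ to the restriction map $\rho\colon H^\bullet(T^{2r})\to H^\bullet(S)$; your analysis of $\rho$ in the exterior algebra is right (surjective for $j\geq 2$, injective with free cokernel $\mathbb{Z}^r$ for $j=1$, and that cokernel is precisely what lifts $\beta_{2r-2}$ from $r(r+1)/2$ to $r(r+3)/2$). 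Two small precision points: the extensions $0 \to \operatorname{coker}\rho^{j-1} \to H^j_c(U) \to \ker\rho^j \to 0$ split because the quotient $\ker\rho^j$ is free (a subgroup of $H^j(T^{2r})$), not because the cokernel is free --- freeness of $\operatorname{coker}\rho^1$ is instead what you need for torsion-freeness; and the kernel count is cleanest via surjectivity plus rank-nullity rather than via the overlapping monomials themselves. As for what each approach buys: yours is shorter, classical, and entirely self-contained, but it leans on the group structure of $T$ (it is close in spirit to the abelian-arrangement viewpoint of \cite{Dupont15} and \cite{Bibby16}, though it delivers integral rather than rational information); the paper's proof is heavier but exercises a general algorithm that applies to configuration spaces in products of arbitrary simplicial complexes and outputs the full functorial chain model over $\mathcal{G}(3)^{op}$, not just the Betti numbers of this one space.
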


\begin{rem}
For even $r$, the space $\Conf(3, T)/T$ has appeared in the study of abelian arrangements.  Specifically, if $A$ is a complex abelian variety of real dimension $r$, then $\Conf(3, T)/T$ is homeomorphic to the arrangement complement
$$
\{ \mbox{ $(a_1, a_2) \in A^2$ so that $a_1 \neq 0$ and $a_2 \neq 0$ and $a_1 \neq a_2$ } \}.
$$
The rational cohomology of this space may be found using a theorem of Dupont, which provides a rational model for certain hypersurface complements \cite{Dupont15}.  The rational cohomology, and its weight filtration, may also be computed from a theorem of Bibby; see \cite[Example 4.2]{Bibby16}.
\end{rem}

\noindent
For ease of exposition, we begin our discussion of the main results with the case $n=2$.
\subsection{Ordered pairs of distinct points} \label{sec:pairs}
The configuration space of ordered pairs of distinct points in $Z$,
$$
\Conf(2, Z) = Z \times Z - \{(z,z) \mbox{ for } z \in Z \},
$$
is also known as the \textbf{deleted diagonal}.  
Suppose that $Z = X \times Y$ factors as a product of two spaces.  We wish to understand $\Conf(2,Z)$ in terms of $X$ and $Y$.

If $z_1 = (x_1, y_1)$ and $z_2 = (x_2, y_2)$ are two points of $Z$, then there are three ways we might have $(z_1, z_2) \in \Conf(2, Z)$:  $(x_1, x_2) \in \Conf(2, X)$, or $(y_1, y_2) \in \Conf(2, Y)$, or both.  In other words, we have a pushout diagram
\begin{equation} \label{eq:basic}
\begin{tikzcd}
\Conf(2, X) \times \Conf(2, Y) \ar[r] \arrow{d} & X^2 \times \Conf(2, Y) \ar[d] \\
\Conf(2, X) \times Y^2 \arrow{r} & \Conf(2, X \times Y).
\end{tikzcd}
\end{equation}
From (\ref{eq:basic}) we see that $X^2$ and $Y^2$ want to be treated as a configuration spaces as well.   Introduce \textbf{graphical configuration space}.  If $\Gamma$ is a graph with vertices $\{1, \ldots, n\}$, let
$$
\Conf(\Gamma, X) = \{ (x_1, \ldots, x_n) \in X^n \mbox{ so that $i \sim_{\Gamma} j \implies x_i \neq x_j$ } \},
$$
where $i \sim_{\Gamma} j$ indicates the existence of an edge in $\Gamma$ connecting $i$ and $j$.
Rewriting (\ref{eq:basic}),
\begin{equation} \label{eq:basic2}
\begin{tikzcd}
\Conf(\, \tinyintervalab \;, X) \times \Conf(\, \tinyintervalab \;, Y) \ar[r] \arrow{d}  & \Conf(\, \tinyinterval \;, X) \times \Conf(\, \tinyintervalab \;, Y) \arrow{d}{\beta} \\
\Conf(\, \tinyintervalab \;, X) \times \Conf(\, \tinyinterval \;, Y) \arrow{r}{\alpha} & \Conf(2, X \times Y).
\end{tikzcd}
\end{equation}
If the spaces $X$ and $Y$ are Hausdorff, then the maps $\alpha$ and $\beta$ are open immersions.  Defining $U = \mathrm{im}(\alpha)$, $V = \mathrm{im}(\beta)$, we find that $\{U, V\}$ is an open cover of $\Conf(2, X \times Y)$ satisfying
\begin{align*} 
U &\cong  \Conf(\, \tinyintervalab \;, X) \times \Conf(\, \tinyinterval \;, Y) \\
V &\cong \Conf(\, \tinyinterval \;, X) \times \Conf(\, \tinyintervalab \;, Y) \\
U \cap V &\cong \Conf(\, \tinyintervalab \;, X) \times \Conf(\, \tinyintervalab \;, Y).
\end{align*}
The Mayer-Vietoris sequence in homology
$$
\cdots \to H_i (U \cap V) \to H_i U \oplus H_i V \to H_i \Conf(2, X \times Y) \xrightarrow{\partial_i} H_{i-1} (U \cap V) \to \cdots,
$$
tightly constrains the homology of $\Conf(2, X \times Y)$ in terms of the homologies of $U$, $V$, and $U \cap V$.  Our next steps provide a method of computing the connecting maps $\partial_i$.

A general pushout of spaces is homotopically ill-behaved.  However, the pushout of (\ref{eq:basic2}) is really the union of two open sets, and such pushouts are weakly equivalent to homotopy pushouts.  Specifically,
$$
\Conf(2, X \times Y) \simeq (U \times [0, 1]) \sqcup (V \times [0, 1]) \Big / \sim
$$
where $\sim$ identifies the two copies of $(U \cap V) \times 0$.  Due to its homotopical nature, this description depends only on the homotopy types of the space-subspace pairs
$$
(\Conf(\, \tinyinterval \;, X) \,, \Conf(\, \tinyintervalab \;, X)) \;\;\;\mbox{ and } \;\;\; (\Conf(\, \tinyinterval \;, Y) \,, \Conf(\, \tinyintervalab \;, Y)).
$$
We have therefore succeeded in providing a description $\Conf(2, X \times Y)$ in terms of $X$ and $Y$ separately.  Moreover, the homology of $\Conf(2, X \times Y)$ may be computed by a double complex spectral sequence whose $E^2$ page coincides with the Mayer-Vietoris sequence.  This double complex enables the calculation of $H_i \Conf(2, X \times Y)$ as well as the connecting maps.

\subsection{An open cover for graphical configuration space in a product} \label{sec:cover}
The results of this paper ultimately rest on three elementary lemmas that describe an organized open cover of configuration space, generalizing the cover found in \S\ref{sec:pairs}.

Given graphs $\Gamma'$ and $\Gamma''$ on the nodes $\{1, \ldots, n\}$, define a subset $\mathcal{U}_{\Gamma', \, \Gamma''} \subseteq (X \times Y)^n$
$$
\mathcal{U}_{\Gamma', \, \Gamma''} = \{ ((x_1, y_1), \ldots, (x_n, y_n)) \mbox{ so that $i \sim_{\Gamma'} j \implies x_i \neq x_j$ and $i \sim_{\Gamma''} j \implies y_i \neq y_j$ } \}.
$$
Under our standing assumption that $X$ is Hausdorff, this subset is guaranteed to be open.  Moreover, if the spaces $X \in \Top_G$ and $Y \in \Top_H$ carry actions of topological groups $G$ and $H$, then the open subset $\mathcal{U}_{\Gamma', \, \Gamma''}$ is stable under the action of $G \times H$.

In the new notation, the open cover from  \S\ref{sec:pairs} reads
$$
\Conf(2, X \times Y) = \mathcal{U}_{\;\tinyintervalab \;,\; \tinyinterval} \; \cup \; \mathcal{U}_{\;\tinyinterval \;,\; \tinyintervalab}
$$

\begin{lem} \label{lem:split}
For all $\Gamma', \Gamma''$, we have an evident homeomorphism
$$
\mathcal{U}_{\Gamma', \, \Gamma''} \cong \Conf(\Gamma', X) \times \Conf(\Gamma'', Y).
$$
\end{lem}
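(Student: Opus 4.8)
The plan is to exhibit the homeomorphism directly by means of the canonical ``shuffle'' map that reorganizes coordinates. Recall the standard homeomorphism
$$
\phi \colon (X \times Y)^n \xrightarrow{\;\cong\;} X^n \times Y^n, \qquad ((x_1, y_1), \ldots, (x_n, y_n)) \longmapsto ((x_1, \ldots, x_n), (y_1, \ldots, y_n)),
$$
which simply separates the $X$-coordinates from the $Y$-coordinates. This map $\phi$ is a homeomorphism of the ambient product spaces, with inverse given by interleaving the two tuples back together.

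First I would check that $\phi$ carries $\mathcal{U}_{\Gamma', \, \Gamma''}$ bijectively onto $\Conf(\Gamma', X) \times \Conf(\Gamma'', Y)$. By definition, a point $((x_1, y_1), \ldots, (x_n, y_n))$ lies in $\mathcal{U}_{\Gamma', \, \Gamma''}$ precisely when $i \sim_{\Gamma'} j$ forces $x_i \neq x_j$ and $i \sim_{\Gamma''} j$ forces $y_i \neq y_j$. These two conditions are logically independent and refer only to the $X$-tuple and the $Y$-tuple respectively: the first says exactly that $(x_1, \ldots, x_n) \in \Conf(\Gamma', X)$, and the second says exactly that $(y_1, \ldots, y_n) \in \Conf(\Gamma'', Y)$. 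Hence a point lies in $\mathcal{U}_{\Gamma', \, \Gamma''}$ if and only if its image under $\phi$ lies in the product $\Conf(\Gamma', X) \times \Conf(\Gamma'', Y)$, so $\phi$ restricts to a bijection between the two subsets.

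Finally, since $\phi$ is a homeomorphism of the ambient spaces and restricts to a bijection between these subsets, its restriction is automatically a homeomorphism once both subsets carry their subspace topologies. There is no genuine obstacle here: the entire content is the observation that the defining inequalities of $\mathcal{U}_{\Gamma', \, \Gamma''}$ decouple into an $X$-condition and a $Y$-condition, which is precisely the ``evident'' nature asserted in the statement. The only point meriting a moment's care is that the subspace topology on $\mathcal{U}_{\Gamma', \, \Gamma''} \subseteq (X \times Y)^n$ is carried by $\phi$ to the subspace topology on $\Conf(\Gamma', X) \times \Conf(\Gamma'', Y) \subseteq X^n \times Y^n$, and this is immediate from $\phi$ being a homeomorphism of the total spaces.
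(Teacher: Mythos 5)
Your proof is correct and is exactly the argument the paper has in mind: the paper states this lemma without proof, deeming the homeomorphism ``evident,'' and what it implicitly intends is precisely your shuffle map $(X \times Y)^n \cong X^n \times Y^n$ together with the observation that the defining conditions decouple into an $X$-condition and a $Y$-condition. Your write-up simply makes that evident identification explicit, so there is nothing to compare beyond noting that you have filled in the omitted details faithfully.
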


\begin{lem} \label{lem:intersections} 
The intersection of two such opens is given by taking unions of graphs
$$
(\mathcal{U}_{\Gamma_1', \,\Gamma_1''}) \cap (\mathcal{U}_{\Gamma_2', \, \Gamma_2''}) = \mathcal{U}_{(\Gamma_1' \cup \Gamma_2'), \,(\Gamma_1'' \cup \Gamma_2'')}.
$$
\end{lem}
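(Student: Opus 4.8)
The plan is to prove Lemma~\ref{lem:intersections} by unwinding the definition of $\mathcal{U}_{\Gamma', \, \Gamma''}$ and observing that intersecting the defining open sets corresponds precisely to conjoining the logical conditions that cut them out, which in turn amounts to taking unions of the underlying graphs. Concretely, a point $((x_1,y_1), \ldots, (x_n,y_n))$ lies in $\mathcal{U}_{\Gamma_1', \, \Gamma_1''}$ exactly when $i \sim_{\Gamma_1'} j \implies x_i \neq x_j$ and $i \sim_{\Gamma_1''} j \implies y_i \neq y_j$, and it lies in $\mathcal{U}_{\Gamma_2', \, \Gamma_2''}$ under the analogous conditions with the subscript $2$. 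A point lies in the intersection iff \emph{both} pairs of conditions hold simultaneously.

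The key step is to recognize that for the $x$-coordinates, requiring both ``$i \sim_{\Gamma_1'} j \implies x_i \neq x_j$'' and ``$i \sim_{\Gamma_2'} j \implies x_i \neq x_j$'' is logically equivalent to the single condition ``$i \sim_{\Gamma_1' \cup \Gamma_2'} j \implies x_i \neq x_j$''. This is because an edge appears in the union graph $\Gamma_1' \cup \Gamma_2'$ precisely when it appears in $\Gamma_1'$ or in $\Gamma_2'$, so the hypothesis $i \sim_{\Gamma_1' \cup \Gamma_2'} j$ is the disjunction of the two individual hypotheses, and an implication with a disjunctive hypothesis is equivalent to the conjunction of the two implications. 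Running the identical argument on the $y$-coordinates with $\Gamma_1''$ and $\Gamma_2''$ completes the logical equivalence, so the defining condition for the intersection is exactly the defining condition for $\mathcal{U}_{(\Gamma_1' \cup \Gamma_2'), \, (\Gamma_1'' \cup \Gamma_2'')}$.

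Having matched the two underlying sets pointwise, I would conclude equality of the two open subsets of $(X \times Y)^n$. Since both sides are described as the same subset of the same ambient space carrying the subspace topology, there is nothing further to check: the equality is literal rather than merely a homeomorphism.

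This lemma presents essentially no obstacle; it is a purely formal manipulation of quantifiers, and the only thing to be careful about is the direction of the equivalence between a disjunctive-hypothesis implication and a conjunction of implications. One might also wish to note explicitly that the union of graphs is taken edgewise on the common vertex set $\{1, \ldots, n\}$, so that the operation is well defined, but this is immediate from the setup in which all graphs share the node set $\{1, \ldots, n\}$.
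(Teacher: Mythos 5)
Your proof is correct and is exactly the definitional unwinding the paper has in mind: the paper states Lemma~\ref{lem:intersections} without proof precisely because intersecting the two opens conjoins their defining implications, and an implication whose hypothesis is the disjunction ``edge in $\Gamma_1'$ or edge in $\Gamma_2'$'' is equivalent to the conjunction of the two separate implications, which is the defining condition of $\mathcal{U}_{(\Gamma_1' \cup \Gamma_2'),\,(\Gamma_1'' \cup \Gamma_2'')}$. Nothing in your argument deviates from or adds to what the paper treats as immediate, and your observation that the equality is a literal equality of subsets of $(X \times Y)^n$ (not merely a homeomorphism) is the right reading of the statement.
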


\begin{lem} \label{lem:cover}
For all graphs $\Gamma$ on the nodes $\{1, \ldots, n\}$, we have
$$
\Conf(\Gamma, X \times Y) = \bigcup_{\Gamma', \, \Gamma''} \mathcal{U}_{\Gamma', \, \Gamma''},
$$
where the union is over all pairs of subgraphs $\Gamma', \Gamma''$ with $\Gamma' \cup \Gamma'' = \Gamma$.
\end{lem}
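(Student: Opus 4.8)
The plan is to verify the asserted set equality by checking both inclusions pointwise, resting on the elementary fact that two points of a product $X \times Y$ are distinct exactly when they differ in at least one coordinate. So I would fix an arbitrary point $p = ((x_1, y_1), \ldots, (x_n, y_n)) \in (X \times Y)^n$ and analyze when it lies in each side.

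For the inclusion $\supseteq$, suppose $p \in \mathcal{U}_{\Gamma', \Gamma''}$ for some pair of subgraphs with $\Gamma' \cup \Gamma'' = \Gamma$. Given any edge $i \sim_{\Gamma} j$, the hypothesis $\Gamma' \cup \Gamma'' = \Gamma$ forces this edge to lie in $\Gamma'$ or in $\Gamma''$; in the former case $x_i \neq x_j$ and in the latter $y_i \neq y_j$, so in either case $(x_i, y_i) \neq (x_j, y_j)$. Since this holds for every edge of $\Gamma$, we conclude $p \in \Conf(\Gamma, X \times Y)$, and hence the right-hand union is contained in the left-hand side.

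For the inclusion $\subseteq$, suppose $p \in \Conf(\Gamma, X \times Y)$. Here I would produce a decomposition tailored to the point: let $\Gamma'$ be the subgraph of $\Gamma$ whose edges are those $\{i, j\}$ with $x_i \neq x_j$, and let $\Gamma''$ be the subgraph of $\Gamma$ whose edges are those $\{i, j\}$ with $y_i \neq y_j$. Then $p \in \mathcal{U}_{\Gamma', \Gamma''}$ by construction. It remains to check that $\Gamma' \cup \Gamma'' = \Gamma$, and this is exactly where the configuration hypothesis is used: for each edge $i \sim_{\Gamma} j$ we have $(x_i, y_i) \neq (x_j, y_j)$, so at least one coordinate differs, placing the edge in $\Gamma'$ or $\Gamma''$. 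Thus $p$ belongs to a term of the union.

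There is no serious obstacle here; the content is purely combinatorial once Lemma \ref{lem:split} has identified each $\mathcal{U}_{\Gamma', \Gamma''}$ with a product of graphical configuration spaces. The only point deserving emphasis is that in the $\subseteq$ direction the witnessing decomposition $(\Gamma', \Gamma'')$ depends on $p$, so the union must genuinely range over \emph{all} pairs with $\Gamma' \cup \Gamma'' = \Gamma$ rather than any single fixed pair; this is the reason the statement is an equality of sets and not a homeomorphism onto one distinguished open.
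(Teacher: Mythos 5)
Your proof is correct and takes essentially the same approach as the paper, whose entire proof is the one-line observation that $(x_i, y_i) \neq (x_j, y_j)$ if and only if $x_i \neq x_j$ or $y_i \neq y_j$; you have simply expanded that observation into the two explicit inclusions (including the pointwise choice of witnessing pair $(\Gamma', \Gamma'')$ in the $\subseteq$ direction, which is exactly what the paper leaves implicit).
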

\begin{proof}
Two ordered pairs $(x_i, y_i)$ and $(x_j, y_j)$ are distinct if and only if
$$
x_i \neq x_j \;\;\;\mbox{ or } \;\;\;y_i \neq y_j. \qedhere
$$
\end{proof}
\noindent

\subsection{Main results}
We obtain suitable generalizations of the ideas and results already indicated for $n=2$ in \S\ref{sec:pairs}.  Every aspect of the argument upgrades in a natural way, corresponding to the combinatorics explained in \S\ref{sec:cover}.

Write $\mathcal{G}(n)$ for the poset of graphs on the vertices $\{1, \ldots, n\}$ ordered by graph inclusion.  The coproduct in this poset, written
$$
U_n \colon \mathcal{G}(n) \times \mathcal{G}(n) \to \mathcal{G}(n),
$$
sends a pair of graphs to their union.  Any $\mathcal{G}(n)$-morphism $\Gamma' \subseteq \Gamma$ induces an inclusion 
$$
\Conf(\Gamma, X) \subseteq \Conf(\Gamma', X)
$$
in the opposite direction.  This construction makes $\Conf(-,X)$ into a functor
$$
\Conf(-,X) \colon \mathcal{G}(n)^{op} \to \Top.
$$
Let $G,H$ be topological groups, and let $X \in \Top_G$ and $Y \in \Top_H$ be Hausdorff spaces with group actions.
\begin{thm} \label{thm:pointwise}
For every $n \in \mathbb{N}$ there is a weak equivalence of $\mathcal{G}(n)^{op}$-shaped diagrams
$$
\mathbb{L}(U_n^{op})_! \left[ \Conf(-, X) \times \Conf(-, Y) \right] \overset{\sim}{\longrightarrow} \Conf(-,X \times Y)
$$
where $\mathbb{L}(U_n^{op})_!$ denotes homotopy left Kan extension along the functor $U_n^{op}$.
\end{thm}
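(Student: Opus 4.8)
The plan is to compute the homotopy left Kan extension pointwise and then recognize the resulting homotopy colimit as the homotopy colimit of the open cover furnished by \S\ref{sec:cover}. Here $\Conf(-,X) \times \Conf(-,Y)$ is read as the external-product bifunctor on $\mathcal{G}(n)^{op} \times \mathcal{G}(n)^{op}$, and the extension is taken along $U_n^{op} \colon \mathcal{G}(n)^{op} \times \mathcal{G}(n)^{op} \to \mathcal{G}(n)^{op}$. Fix a graph $\Gamma$. By the Bousfield--Kan bar-construction formula for homotopy left Kan extension, the value at $\Gamma$ is the homotopy colimit of $(\Gamma', \Gamma'') \mapsto \Conf(\Gamma', X) \times \Conf(\Gamma'', Y)$ over the comma category $(U_n^{op} \downarrow \Gamma)$. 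Since $\mathcal{G}(n)$ is a poset and $U_n$ computes unions, this comma category is simply the poset $\mathcal{A}_\Gamma$ of pairs $(\Gamma', \Gamma'')$ with $\Gamma \subseteq \Gamma' \cup \Gamma''$, a morphism $(\Gamma_1', \Gamma_1'') \to (\Gamma_2', \Gamma_2'')$ existing exactly when $\Gamma_2' \subseteq \Gamma_1'$ and $\Gamma_2'' \subseteq \Gamma_1''$.

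Next I would assemble the comparison map. By the universal property of the homotopy left Kan extension, a transformation out of it into $\Conf(-, X \times Y)$ is the same datum as a transformation $\Conf(-,X) \times \Conf(-,Y) \Rightarrow \Conf(-, X \times Y) \circ U_n^{op}$. At $(\Gamma', \Gamma'')$ this is the composite of the homeomorphism of Lemma~\ref{lem:split}, $\Conf(\Gamma', X) \times \Conf(\Gamma'', Y) \cong \mathcal{U}_{\Gamma', \Gamma''}$, with the open inclusion $\mathcal{U}_{\Gamma', \Gamma''} \hookrightarrow \Conf(\Gamma' \cup \Gamma'', X \times Y)$, valid because $i \sim_{\Gamma' \cup \Gamma''} j$ forces $x_i \neq x_j$ or $y_i \neq y_j$. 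Naturality of this family is immediate, and in the bar-construction model it assembles into a map of $\mathcal{G}(n)^{op}$-diagrams. Since weak equivalences of diagrams are detected objectwise, it remains only to check, for each fixed $\Gamma$, that the induced map
$$
\hocolim_{\mathcal{A}_\Gamma} \mathcal{U}_{\Gamma', \Gamma''} \longrightarrow \Conf(\Gamma, X \times Y)
$$
is a weak equivalence, where I have transported the diagram across Lemma~\ref{lem:split} so that $\mathcal{A}_\Gamma$ now indexes open subsets $\mathcal{U}_{\Gamma', \Gamma''} \subseteq \Conf(\Gamma, X \times Y)$ with inclusions governed by Lemma~\ref{lem:intersections}.

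The heart of the matter is therefore that the homotopy colimit of this diagram of opens recovers their union. By Lemma~\ref{lem:cover} the sets $\mathcal{U}_{\Gamma', \Gamma''}$ with $(\Gamma', \Gamma'') \in \mathcal{A}_\Gamma$ cover $\Conf(\Gamma, X \times Y)$, and by Lemma~\ref{lem:intersections} the diagram is closed under intersection. I would invoke the standard projection lemma for homotopy colimits of open covers: the canonical map is a weak equivalence provided that, for every point $w$, the full subposet of indices whose open set contains $w$ has contractible nerve. To verify this, fix $w = ((x_1, y_1), \ldots, (x_n, y_n))$ and set $\Delta_X(w) = \{ \{i,j\} : x_i \neq x_j \}$ and $\Delta_Y(w) = \{ \{i,j\} : y_i \neq y_j \}$. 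Then $w \in \mathcal{U}_{\Gamma', \Gamma''}$ exactly when $\Gamma' \subseteq \Delta_X(w)$ and $\Gamma'' \subseteq \Delta_Y(w)$, so the relevant subposet is
$$
\mathcal{A}_\Gamma(w) = \{ (\Gamma', \Gamma'') : \Gamma' \subseteq \Delta_X(w), \; \Gamma'' \subseteq \Delta_Y(w), \; \Gamma \subseteq \Gamma' \cup \Gamma'' \}.
$$
Because $w \in \Conf(\Gamma, X \times Y)$, every edge of $\Gamma$ is separated in $X$ or in $Y$, so $\Gamma \subseteq \Delta_X(w) \cup \Delta_Y(w)$; hence $(\Delta_X(w), \Delta_Y(w))$ lies in $\mathcal{A}_\Gamma(w)$ and, being largest in each coordinate, is its initial object. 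A poset with an initial object has contractible nerve, which is the hypothesis needed.

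The main obstacle is not the combinatorics—made transparent by the three lemmas of \S\ref{sec:cover}—but the homotopical input: pinning down a version of the open-cover projection lemma valid for the possibly non-compact spaces at hand and matching its hypotheses to the contractibility of the stalk posets $\mathcal{A}_\Gamma(w)$ just checked. I expect this to go through for arbitrary Hausdorff $X$ and $Y$, since $\mathcal{G}(n) \times \mathcal{G}(n)$ is finite, so the indexing poset, the cover, and the homotopy colimit are all finite; compactness of singular simplices then reduces the weak-equivalence claim to the finite nerve computation above. Finally, the $G \times H$-equivariance recorded in \S\ref{sec:cover} is preserved throughout, as every map in sight is induced by the group-stable inclusions $\mathcal{U}_{\Gamma', \Gamma''} \hookrightarrow \Conf(\Gamma, X \times Y)$.
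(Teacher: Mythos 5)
Your proposal is correct, and its skeleton is the same as the paper's---evaluate the homotopy Kan extension at each $\Gamma$ as a homotopy colimit over the comma category, identify the resulting diagram with the open cover of \S\ref{sec:cover}, and conclude with a Dugger--Isaksen open-cover theorem---but the two proofs discharge the key step differently, and you prove a different statement first. The paper never proves Theorem~\ref{thm:pointwise} directly: it establishes the $\GI$-equivariant Theorem~\ref{thm:main} by factoring the comparison as $\mathbb{L}(U^{op})_!\Pi \to (U^{op})_!\Pi \to \Conf(-,X\times Y)$, showing the second arrow is a natural homeomorphism, and then deduces the pointwise statement in \S\ref{sec:pointwise_proof} by cofinality. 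More substantively, where you keep the whole comma category $\mathcal{A}_\Gamma$ and invoke the stalk-wise projection lemma (contractibility of each $\mathcal{A}_\Gamma(w)$, witnessed by your initial object $(\Delta_X(w),\Delta_Y(w))$), the paper first shrinks the index: Proposition~\ref{prop:initiala} exhibits a terminal object $(\Gamma\cap Z',\Gamma\cap Z'')$ in every slice to show that the subposet $\mathcal{P}(\Gamma)$ of pairs with union exactly $\Gamma$ is homotopy initial, and only then applies \cite[Corollary 3.3]{DI04} to the resulting intersection-closed cover. These are parallel extremal-object arguments---yours indexed by points of the space, the paper's by objects of the comma category---and either one works. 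One caution: your closing suggestion that finiteness of the poset plus compactness of singular simplices would justify the projection lemma is not a proof (that style of argument yields excision-type statements in homology, not weak equivalences of homotopy colimits); the correct move is to appeal to the Dugger--Isaksen machinery the paper itself cites, which applies to arbitrary open covers of arbitrary spaces with no compactness or numerability hypotheses, the intersection-closed case being the instance in which stalk posets are codirected and hence automatically contractible. What the paper's longer route buys is the naturality in $\GI$ and the identification of the strict Kan extension needed for Theorem~\ref{thm:main}; what yours buys is a shorter, self-contained proof of exactly the pointwise statement.
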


\begin{cor}
The homotopy type of $\Conf(n, X \times Y)$ depends only on the homotopy types of the $\mathcal{G}(n)^{op}$-shaped diagrams $\Gamma' \mapsto \Conf(\Gamma', X)$ and $\Gamma'' \mapsto \Conf(\Gamma'', Y)$.
\end{cor}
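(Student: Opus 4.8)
The plan is to deduce the corollary directly from Theorem~\ref{thm:pointwise}, using only the homotopy invariance of the two constructions involved: the external product of diagrams and the homotopy left Kan extension. Note first that the full configuration space appears as a single value of the diagram, namely $\Conf(n, X \times Y) = \Conf(K_n, X \times Y)$, where $K_n$ denotes the complete graph on $\{1, \ldots, n\}$. So it suffices to show that the $\mathcal{G}(n)^{op}$-shaped diagram $\Conf(-, X \times Y)$ depends, up to objectwise weak equivalence, only on the objectwise homotopy types of the diagrams $\Conf(-, X)$ and $\Conf(-, Y)$.

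By Theorem~\ref{thm:pointwise} this diagram is weakly equivalent to $\mathbb{L}(U_n^{op})_! [\Conf(-, X) \times \Conf(-, Y)]$, so I would prove the sharper statement that the assignment
$$
(D_X, D_Y) \longmapsto \mathbb{L}(U_n^{op})_! [D_X \times D_Y]
$$
carries an objectwise weak equivalence of input diagrams to an objectwise weak equivalence of output diagrams. Here $D_X \times D_Y$ is the external product, the $(\mathcal{G}(n) \times \mathcal{G}(n))^{op}$-shaped diagram $(\Gamma', \Gamma'') \mapsto D_X(\Gamma') \times D_Y(\Gamma'')$, and $U_n^{op}$ is the functor along which we Kan extend. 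The argument has two steps. First, a pair of objectwise weak equivalences $D_X \xrightarrow{\sim} D_X'$ and $D_Y \xrightarrow{\sim} D_Y'$ induces a map $D_X \times D_Y \to D_X' \times D_Y'$ whose value at $(\Gamma', \Gamma'')$ is a product of two weak equivalences of spaces; since products of weak equivalences are weak equivalences, this is again an objectwise weak equivalence. Second, the homotopy left Kan extension is a homotopical functor: by construction it preserves objectwise weak equivalences. Composing the two steps gives the claim, and evaluating the resulting weak equivalence of $\mathcal{G}(n)^{op}$-diagrams at $K_n$ recovers the statement for $\Conf(n, X \times Y)$.

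The one point that requires care, and the main obstacle, is the homotopy invariance of $\mathbb{L}(U_n^{op})_!$ itself. This is where model-categorical bookkeeping enters: the homotopy Kan extension is computed by taking a projective cofibrant replacement of the input over $(\mathcal{G}(n) \times \mathcal{G}(n))^{op}$ and then applying the ordinary strict left Kan extension, and one must verify that this descends to a well-defined functor on homotopy categories that preserves weak equivalences. For diagrams of spaces indexed by the finite poset $\mathcal{G}(n) \times \mathcal{G}(n)$ this is standard: the projective model structure exists, the ordinary left Kan extension is left Quillen, and its total left derived functor is therefore homotopical. I would also remark that no cofibrancy hypothesis is needed in the first step, since products of weak equivalences of spaces are weak equivalences regardless of cofibrancy, so the external product already preserves objectwise weak equivalences before any replacement is taken. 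With these facts assembled, the corollary follows.
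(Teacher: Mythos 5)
Your proposal is correct and follows the same route as the paper, whose entire proof reads ``Homotopy invariance of products and homotopy left Kan extensions''; your two steps (products of weak equivalences are weak equivalences, and $\mathbb{L}(U_n^{op})_!$ preserves objectwise weak equivalences) are exactly these two facts, spelled out with the model-categorical justification the paper leaves implicit.
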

\begin{proof}
Homotopy invariance of products and homotopy left Kan extensions.
\end{proof}

We now state a result that applies to all $n$ at once.  Write $\GI$ for the category of finite graphs with injections, and $U \colon \GI_2 \to \GI$ for the functor that unions a pair of graphs with the same underlying vertex set.  The details of this notation are given in \S\ref{sec:notation} and \S\ref{sec:action}.

\begin{thm} \label{thm:main}
There is a weak equivalence
$$
\mathbb{L}(U^{op})_! \left[ \Conf(-, X) \times \Conf(-, Y) \right] \overset{\sim}{\longrightarrow} \Conf(-, X \times Y)
$$
in the category of functors $\GI^{op} \to \Top_{G \times H}$.
\end{thm}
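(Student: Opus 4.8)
The plan is to deduce Theorem~\ref{thm:main} from Theorem~\ref{thm:pointwise} by the principle that a map of $\GI^{op}$-shaped diagrams is a weak equivalence precisely when it is one objectwise, so that the global statement over $\GI$ follows once the comma categories computing the homotopy left Kan extension over $\GI$ are shown to reduce, at each individual graph, to those over $\mathcal{G}(n)$ already treated in Theorem~\ref{thm:pointwise}. First I would produce the comparison map; then I would check it objectwise.

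To construct the natural transformation, I would invoke the universal property of the derived left Kan extension: a map $\mathbb{L}(U^{op})_![\Conf(-,X)\times\Conf(-,Y)] \to \Conf(-,X\times Y)$ is adjoint to a map of $\GI_2^{op}$-shaped diagrams
$$
\Conf(\Gamma',X)\times\Conf(\Gamma'',Y) \longrightarrow \Conf(\Gamma'\cup\Gamma'', X\times Y).
$$
Such a map is supplied, naturally and $G\times H$-equivariantly, by Lemma~\ref{lem:split} together with Lemma~\ref{lem:cover}: the homeomorphism $\Conf(\Gamma',X)\times\Conf(\Gamma'',Y)\cong\mathcal{U}_{\Gamma',\Gamma''}$ composed with the open inclusion $\mathcal{U}_{\Gamma',\Gamma''}\hookrightarrow\Conf(\Gamma'\cup\Gamma'',X\times Y)$. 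Naturality in $\GI_2^{op}$ is the routine observation that these inclusions are compatible with restriction along vertex injections and with edge inclusions, while equivariance is immediate since every map in sight commutes with the $G\times H$-action. Passing back through the derived adjunction yields the comparison map $\alpha$ in the category of functors $\GI^{op}\to\Top_{G\times H}$.

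It then remains to see that $\alpha$ is an objectwise weak equivalence. Fix a graph $\Gamma$ with $n$ vertices. The pointwise formula for the homotopy left Kan extension expresses the source at $\Gamma$ as $\hocolim_{(U^{op}\downarrow\Gamma)}[\Conf(-,X)\times\Conf(-,Y)]$, a homotopy colimit over the comma category of pairs $(\Gamma',\Gamma'')$ equipped with a $\GI$-morphism $\Gamma\to\Gamma'\cup\Gamma''$. I would show that the full subcategory spanned by those objects for which this structure morphism is a bijection on vertices---equivalently, the pairs $(\Gamma',\Gamma'')$ on the vertex set of $\Gamma$ with $\Gamma\subseteq\Gamma'\cup\Gamma''$---is homotopy final. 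Granting this, the homotopy colimit collapses to one over a category equivalent, after a choice of labeling $\mathcal{V}(\Gamma)\cong\{1,\dots,n\}$, to $(U_n^{op}\downarrow\Gamma)$, which is exactly the pointwise value of $\mathbb{L}(U_n^{op})_!$ appearing in Theorem~\ref{thm:pointwise}. Since that theorem already identifies this value with $\Conf(\Gamma,X\times Y)$ compatibly with $\alpha$, the map $\alpha$ is a weak equivalence at $\Gamma$; as $\Gamma$ was arbitrary, $\alpha$ is a weak equivalence of $\GI^{op}$-diagrams.

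The hard part will be the cofinality step. The intended mechanism is \emph{trimming}: given $(\Gamma',\Gamma'',f)$ with $f\colon\Gamma\to\Gamma'\cup\Gamma''$ an arbitrary vertex injection, one restricts $\Gamma'$ and $\Gamma''$ to the image $f(\mathcal{V}(\Gamma))$ to obtain a pair on $\mathcal{V}(\Gamma)$ through which $f$ factors. I expect this operation to exhibit the subcategory as homotopy final by furnishing an initial object in each relevant comma category, so that Quillen's Theorem~A applies; the delicate point is to track the variance correctly through the opposite categories defining $U^{op}\downarrow\Gamma$ and to confirm that trimming is functorial and (co)reflective in the direction needed for finality rather than initiality. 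It is worth noting that $\GI$, $\GI_2$, and $U$ are all graded over the category $\Fin$ of finite sets and injections via the vertex-set functor, and this grading is what makes trimming well-defined; organizing the argument fiberwise over $\Fin$ should streamline the finality verification. Finally, because every construction above is $G\times H$-equivariant and the homotopy colimit is formed in $\Top_{G\times H}$, no separate equivariant argument is required: the objectwise weak equivalence just established is already a weak equivalence in the functor category $\GI^{op}\to\Top_{G\times H}$.
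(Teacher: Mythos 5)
Your overall plan is the paper's argument run in reverse, and the reversal introduces a circularity. In the paper, Theorem~\ref{thm:pointwise} is not an independent input: its proof (\S\ref{sec:pointwise_proof}) begins by invoking Theorem~\ref{thm:main}, and then restricts indexing categories using exactly the comparisons you describe---Propositions~\ref{prop:initiala} and~\ref{prop:initial}, whose content is your ``trimming'' step (replace $(Z',Z'')$ by the pullback $(\Gamma \cap Z', \Gamma \cap Z'')$ along $\psi$, which is a terminal object of the relevant slice category, these categories being posets). So citing Theorem~\ref{thm:pointwise} to obtain Theorem~\ref{thm:main} is circular as the paper is organized. Your reduction itself---the pointwise statement plus homotopy finality of $(U_n^{op}/\Gamma) \hookrightarrow (U^{op}/\Gamma)$ implies the $\GI$ statement---is mathematically sound, and your trimming mechanism is precisely how the paper proves its two initiality propositions; but the reduction leaves the geometric heart of the theorem unproved.

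That heart is the following: for fixed $\Gamma$, the homotopy colimit over pairs $(\Gamma', \Gamma'')$ of the spaces $\Conf(\Gamma', X) \times \Conf(\Gamma'', Y)$ must be identified with $\Conf(\Gamma, X \times Y)$. Your proposal constructs the comparison map (via the homeomorphisms $\mathcal{U}_{\Gamma',\Gamma''} \cong \Conf(\Gamma',X) \times \Conf(\Gamma'',Y)$ and the open inclusions, essentially as the paper does), but nowhere argues that it is a weak equivalence except by appeal to Theorem~\ref{thm:pointwise}. In the paper this is where the work lies: (i) the comparison $\alpha$ from the homotopy left Kan extension to the strict one is a weak equivalence because, after restriction along $\mathcal{P}(\Gamma)^{op}$, the diagram is the poset of overlaps of an open cover of $\Conf(\Gamma, X \times Y)$ (Lemmas~\ref{lem:intersections} and~\ref{lem:cover}), so the Dugger--Isaksen theorem \cite[Corollary 3.3]{DI04} applies; and (ii) the strict Kan extension is identified with $\Conf(-, X \times Y)$ by an explicit natural homeomorphism $\beta$, whose inverse is produced from the discontinuous section $\kappa_\Gamma$. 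To repair your proof you would either have to prove Theorem~\ref{thm:pointwise} independently first---which requires reproducing this open-cover descent argument over $\mathcal{G}(n)$, including the naturality of the identification---or prove Theorem~\ref{thm:main} directly as the paper does. As written, the proposal defers the essential content to a result that the paper derives from the very theorem you are asked to prove.
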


\begin{rem}
Theorems \ref{thm:pointwise} and \ref{thm:main} may be iterated, providing a homotopy decomposition for configuration space in a product of several factors.  The proof of Theorem \ref{thm:torus-mod-rotation} relies heavily on this idea.
\end{rem}

\subsection{Relation to prior work}
We tabulate several results describing various groups $H_\bullet( \Conf(n, Z)\, ; \, R)$, and take note of their assumptions and the explicitness of their answers.
\\ \\
\begin{tabular}{cccc}
\hline
 $Z$ & $R$ & style of description & citation \\ \hline
 $\mathbb{C}$ & $\mathbb{C}$ & full answer & \cite{Arnold69}  \\
 $\mathbb{R}^k$ & $\mathbb{Z}$ & full answer & \cite{CohenThesis} \\
 $S^k$ & $\mathbb{Z}$ & full answer &  \cite{FZ00} \\
 $\mathbb{R}P^k$& $\mathbb{Z}[1/2]$ & full answer & \cite{GGX15} \\
 $M$ a manifold & field & spectral sequence & \cite{CT78} \\
 $M \times \mathbb{R}$ & field & explicit spectral sequence & \cite{CT78} \\
 smooth projective variety/$\mathbb{C}$ & $\mathbb{Q}$ & explicit spectral sequence &  \cite{Totaro96} \cite{Kriz94} \\
 simplicial complex & $ \mathbb{Z} $ & explicit chain complex & \cite{WG17}.\\ \hline
\end{tabular}
\\\vspace{.1in} \\  \noindent
Although the last three entries of the table are listed ``explicit,'' they are often computationally expensive, leaving lots of room between ``explicit'' and ``full answer''.  For example, combinatorial study of the spectral sequence from \cite{CT78} and \cite{Totaro96} can be quite involved; see \cite{Maguire16} for the case of $\mathbb{C}P^3$.  The complex found in \cite{WG17} is worse still, usually much too large to be useful in computation if $\dim Z >2$.  

This paper does not fit snugly in the table, since it is of a different style.  We produce information about more-complicated configuration spaces by looking at simpler ones.  For example, our proof of Theorem \ref{thm:torus-mod-rotation} relies on a simple---but \textit{ad hoc}---model of
$$
C_\bullet^{sing} \Conf(-,\mathbb{R}/\mathbb{Z}) \colon \mathcal{G}(3)^{op} \to \Ch_\bullet(\mathbb{Z})
$$
that we construct by hand.  If the more-powerful results in the table could be adapted to give systematic chain models for graphical configuration spaces, then Theorem \ref{thm:pointwise} could be employed to compute many new groups.

\begin{rem}
The techniques of \cite{WG17} do adapt to this setting, as we demonstrate in \S\ref{sec:method}, where we use Theorem \ref{thm:pointwise} to compute the homology of a few small configuration spaces.  See Remark \ref{rem:wg17} for more details.
\end{rem}

\begin{rem}
The recent paper ``Configuration spaces of products'' \cite{DHK17}  has some philosophical similarity to this one.  They also obtain a homotopical description of configuration space in terms of a derived tensor product, in their case, a tensor product of modules over an operad.  As has been typical in the study of configuration space, their work applies only to manifolds\footnote{We must mention, however, the study of configuration space in topological graphs, including connections to representation stability; see \cite{Ramos2018} and \cite{An:2018aa}, for example, and their bibliographies.}.  We also mention the paper ``The configuration category of a product'' \cite{dBW17} which provides a similar description of a related configuration space, still with the manifold assumption.
\end{rem}

\begin{rem}[Representation stability] \label{rem:repstability}
According to \cite{ChurchEllenbergFarb15}, the $\FI^{op}$ structure on configuration space gives a finitely generated $\FI$-action on cohomology if $Z$ is an oriented, finite-type manifold of dimension at least two.  Tosteson proves a similar result that also applies to certain singular spaces \cite{Tosteson16}.

In principle, if enough values for small $n$ can be computed, then representation stability provides the rest.  Unfortunately, this approach is not currently feasible because the ``small $n$'' computations are still too difficult. 
\end{rem}
\begin{rem}
In \S\ref{sec:method}, we use Theorem \ref{thm:pointwise} in conjunction with the derived category of $\mathcal{G}(n)^{op}$-modules to make homology calculations one $n$ at a time.  However, in light of Remark \ref{rem:repstability}, it would be more natural and forceful to use Theorem \ref{thm:main} in conjunction with the derived category of $\GI$-modules to compute with all $n$ at once.  There are two difficulties that arise.  

First, cofibrant chain models for graphical configuration space will necessarily be quite complicated, even for configurations in simple spaces.  Second, there is no technology in place for the derived category of $\GI$-modules.  Work in this direction could make use of a growing understanding of the derived category of $\FI$-modules; see \cite[Theorem 2.8]{NS17} for example.
\end{rem}

\subsection{Notation for graphs and injections} \label{sec:notation}
By a finite graph $\Gamma = (V, \sim)$, we mean a finite set $V$ equipped with a relation $\sim$ satisfying symmetry and total non-reflexivity:
$$
v \sim u \; \Longleftrightarrow u \sim v \;\; \mbox{ and } \;\; v \not \sim v \;\;\;\;\;\;\;\;\;\mbox{for all $u, v \in V$.}
$$
  We shall call a relation satisfying these two conditions a \textbf{graphical relation}.

Given two graphs $\Gamma_1 = (V_1, \sim_1)$ and $\Gamma_2 = (V_2, \sim_2)$, a \textbf{graph injection} $\Gamma_1 \to \Gamma_2$ is a set injection $\varphi \colon V_1 \hookrightarrow V_2$ preserving the graphical relation:
$$
u \sim_1 v \; \implies \varphi(u) \sim_2 \varphi(v).
$$
The category of finite graphs with graph injections is denoted $\GI$.  We also need a category called $\GI_2$ whose objects are triples $(V;\; \sim',\, \sim'')$ of a finite set and two graphical relations, and where a morphism is required to preserve each graphical relation separately.  

Write $U$ for the \textbf{union-the-edges functor}, 
$$
\begin{array}{rcccl}
U & \colon & \GI_2 &\to& \GI \\
&& (V;\; \sim',\, \sim'') &\mapsto& (V, \; \sim' \cup \sim'' \;),
\end{array}
$$
 where $\sim' \cup \sim''$ denotes the union of relations considered as subsets of $V \times V$.

If $\Gamma = (V, \sim)$ is a graph, an \textbf{edge-subgraph} $\Gamma' \subseteq_e \Gamma$ is any graph $\Gamma' = (V, \sim')$ on the same vertices so that $u \sim' v \implies u \sim v$ for all $u, v \in V$.  Let $\mathcal{G}(\Gamma)$ be the poset of all edge-subgraphs of $\Gamma$, and write $\mathcal{P}(\Gamma) \subseteq \mathcal{G}(\Gamma) \times \mathcal{G}(\Gamma)$ for the subposet whose objects are pairs of edge-subgraphs $(\Gamma', \Gamma'')$ with $U(\Gamma', \Gamma'') = \Gamma$.

\subsection{Action of $\GI^{op}$ on graphical configuration space} \label{sec:action}
For any space $X$, a graph injection $\varphi \colon \Gamma_1 \to \Gamma_2$ induces a continuous map pointing the other way
$$
\Conf(\varphi, X) \colon \Conf(\Gamma_2, X) \to \Conf(\Gamma_1, X)
$$
given by forgetting points and relabeling.  Explicitly, if 
$$
\Gamma_1 = (\{1, \ldots\hspace{1.5pt}, \hspace{1.5pt} n\}, \sim_1) \mbox{      and      } \Gamma_2 = (\{1, \ldots , m\}, \sim_2),
$$
then the induced map on configuration space is given by the formula
$$
(x_1, \ldots, x_m) \mapsto (x_{\varphi(1)}, \ldots, x_{\varphi(n)}).
$$

In applications, the space $X$ may carry the action of a topological group $G$, and we accommodate this with the general definition
$$
\begin{array}{rcccl}
\Conf(-, X) & \colon & \GI^{op} &\to& \Top_G \\
&& \Gamma &\mapsto& \Conf( \Gamma , X),
\end{array}
$$
where $G$ acts on $\Conf(\Gamma, X)$ diagonally.  We now clarify some notation appearing in the statement of Theorem \ref{thm:main}.  If $X \in \Top_G$ and $Y \in \Top_H$ define the functor
$$
\begin{array}{rcccl}
\Conf(-, X) \times \Conf(-, Y) &\colon & \GI_2 &\to& \Top_{G \times H}\\
&&(V;\; \sim',\, \sim'')& \mapsto &\Conf(\Gamma', X) \times \Conf(\Gamma'', Y),
\end{array}
$$
where $\Gamma' = (V, \sim')$ and $\Gamma'' = (V, \sim'')$.

\subsection{Acknowledgements}
Thanks to Daniel Cohen, Jordan Ellenberg, Nir Gadish, Ben Knudsen, Aaron Mazel-Gee, Peter Patzt, Sam Payne, Eric Ramos, Daniel Ramras, and Jesse Wolfson for helpful conversations, and to Graham Denham, Giovanni Gaiffi, Rita Jim\'enez Rolland, and Alexander Suciu for organizing the Oberwolfach meeting at which the main results of this paper were announced \cite{MFO18}.  This work was inspired in part by the excellent paper \cite{MPY17}.  I learned how to prune a chain complex from the source code of Macaulay2 \cite{M2}.  This work was supported by the Algebra RTG at the University of Wisconsin, DMS-1502553.

\section{Computing homology using Theorem \ref{thm:pointwise}} \label{sec:method}
\noindent
After taking singular chains, the homotopy left Kan extension used in Theorem \ref{thm:pointwise} becomes a (purely algebraic) left derived functor.  We explain how to compute this functor by finding cofibrant chain models.

\subsection{Singular chains}
Given a space $X \in \Top_G$, we may compute its equivariant homology using the complex of singular chains on the Borel construction $C_\bullet^{sing}((X \times EG)/G)$.  This construction may be broken into three steps:
$$
\Top_G \xrightarrow{\hocolim_G} \Top \xrightarrow{\Sigma^\infty_+} \Sp \xrightarrow{- \wedge H\mathbb{Z}} \Mod_{H\mathbb{Z}}.
$$
The first map takes homotopy $G$-orbits; it is an $(\infty,1)$-analog of the usual quotient by $G$.  We write $X_{hG} = (X \times EG)/G$ for the image of $X$ under this map.

The second map sends $X_{hG}$ to its suspension spectrum $\Sigma^{\infty} (X_{hG})_+$.  This operation is stabilization in the sense of stable homotopy theory.  By the Freudenthal suspension theorem, we retain here all information about $X$ that is relevant to its homology.

The third map is smashing with $H\mathbb{Z}$, the Eilenberg-Mac Lane spectrum associated to the integers.  This operation is the stable homotopy version of ``taking homology,''  since the homotopy groups of the resulting space give the usual $G$-equivariant homology of $X$.

Since spaces have no integral homology in negative degree, the composite of these three maps actually lands in the subcategory of connective $H\mathbb{Z}$-modules, a category that is modeled by nonnegatively graded chain complexes of abelian groups via the usual Moore complex.

The following consequence is well-known to experts.

\begin{lem} \label{lem:commutes}
If $f \colon \mathcal{C} \to \mathcal{D}$ is a functor and $Z \colon \mathcal{C} \to \Top_G$ is a diagram, then there is a quasi-isomorphism of chain complexes
$$
C_{\bullet}^{sing} \mathbb{L}f_! Z \underset{qi}{\simeq} \mathbb{L}f_! C_{\bullet}^{sing} Z 
$$
where the first homotopy left Kan extension is taken in the category of $G$-spaces, and the second is taken in the category of chain complexes.
\end{lem}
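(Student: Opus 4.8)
The plan is to reduce the entire statement to a single fact: the equivariant chains functor $C_\bullet^{sing}$ preserves homotopy colimits. Recall that homotopy left Kan extension along $f \colon \mathcal{C} \to \mathcal{D}$ admits a pointwise description as a homotopy colimit over a comma category: for each $d \in \mathcal{D}$,
$$
(\mathbb{L}f_! Z)(d) \;\simeq\; \hocolim_{(c,\, f(c) \to d) \,\in\, f/d} Z(c).
$$
This formula is valid in any target admitting homotopy colimits, so it holds verbatim both for $Z \colon \mathcal{C} \to \Top_G$ and for $C_\bullet^{sing} Z \colon \mathcal{C} \to \Ch_\bullet(\mathbb{Z})$. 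Consequently, any functor that carries homotopy colimits to homotopy colimits commutes with $\mathbb{L}f_!$ up to natural equivalence, and it suffices to establish this preservation property for $C_\bullet^{sing}$, together with the naturality of the resulting comparison in $d$ (so that the equivalences assemble into a map of $\mathcal{D}$-shaped diagrams which is an objectwise quasi-isomorphism).

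To verify that $C_\bullet^{sing}$ preserves homotopy colimits, I would exploit the factorization recorded in the preceding subsection,
$$
\Top_G \xrightarrow{(-)_{hG}} \Top \xrightarrow{\Sigma^\infty_+} \Sp \xrightarrow{-\wedge H\mathbb{Z}} \Mod_{H\mathbb{Z}},
$$
and observe that each arrow is the underlying functor of a left adjoint: homotopy orbits $(-)_{hG}$ is left adjoint to the trivial-action functor, $\Sigma^\infty_+$ is left adjoint to $\Omega^\infty$, and $- \wedge H\mathbb{Z}$ is left adjoint to the forgetful functor from $H\mathbb{Z}$-modules. Left adjoints of $(\infty,1)$-categories preserve all colimits, hence homotopy colimits, so the composite does too; passing through the Dold--Kan equivalence between connective $H\mathbb{Z}$-modules and nonnegatively graded chain complexes identifies this composite with $C_\bullet^{sing}$ and places the target in $\Ch_\bullet(\mathbb{Z})$, as desired.

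If one prefers to avoid $(\infty,1)$-categorical language, the same conclusion follows from an explicit bar-construction model. The homotopy colimit over $f/d$ is the geometric realization of a simplicial $G$-space whose $k$-simplices form a coproduct of the spaces $Z(c_0)$ indexed by length-$k$ chains in $f/d$. Since $C_\bullet^{sing}$ sends disjoint unions to direct sums and, by the Eilenberg--Zilber theorem, sends the realization of a simplicial space to a complex naturally quasi-isomorphic to the totalization of the associated simplicial chain complex, it converts this bar construction into the analogous one computing the homotopy colimit of $C_\bullet^{sing} Z$; the Borel construction is dispatched identically, being itself a realization preserved by singular chains. The main obstacle is precisely this naturality-with-quasi-isomorphism bookkeeping: one must check that the Eilenberg--Zilber comparison, the passage through the Borel construction, and the identification of the two bar constructions cohere into a single natural transformation of $\mathcal{D}$-shaped diagrams of chain complexes that is a quasi-isomorphism at every object of $\mathcal{D}$. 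This is what upgrades the pointwise equivalences to the asserted quasi-isomorphism; everything else is formal.
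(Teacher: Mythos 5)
Your proposal is correct and follows essentially the same route as the paper: the paper's proof simply observes that $\hocolim_G$, $\Sigma^\infty_+$, and $-\wedge H\mathbb{Z}$ are each left $(\infty,1)$-adjoints and hence commute with homotopy left Kan extension, which is precisely your main argument (the pointwise hocolim formula plus preservation of homotopy colimits by left adjoints, with Dold--Kan identifying the target). Your additional bar-construction sketch is a fallback the paper does not include, but the core of your argument coincides with the paper's.
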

\begin{proof}
Each of the three operations $\hocolim_G$, $\Sigma^{\infty}_+$, and $- \wedge H \mathbb{Z}$ are left $(\infty,1)$-adjoints, so they commute with homotopy left Kan extension.  
\end{proof}

\subsection{The derived category of $\mathcal{G}(n)^{op}$-modules}
In order to compute the derived functor from Lemma \ref{lem:commutes}, we will use the projective model structure on diagrams of chain complexes.

Let $\mathcal{A} = [\mathcal{G}(n)^{op}, \Ab]$ be the abelian category of presheaves of abelian groups on the poset $\mathcal{G}(n)$.  The computation we wish to perform occurs in the derived category $D_+(\mathcal{A})$ of chain complexes in nonnegative degree, and relies heavily on projective resolutions, so we begin with some basic facts about projective objects in $\mathcal{A}$.

There is a basic projective for every $\Gamma \in \mathcal{G}(n)$, given by the linearized representable functor
$$
\begin{array}{rcccl}
(-,\Gamma) & \colon & \mathcal{G}(n)^{op} & \to & \Ab \\
& & \gamma & \mapsto & \mathbb{Z} \cdot \Hom_{\mathcal{G}(n)}(\gamma, \Gamma).
\end{array}
$$
We use a similar notation for the linearized representable presheaves on any category.
\begin{lem}[Yoneda] \label{lem:yoneda}
If $\mathcal{C}$ is any category, and $(-,c)$ stands for the linearized presheaf represented by $c \in \mathcal{C}$, then for any presheaf $A \colon \mathcal{C}^{op} \to \Ab$,
$$
\Hom((-,c), A) \simeq Ac.
$$
As a consequence, $(-,c)$ is projective, since evaluation at $c$ is an exact functor.  Moreover, since any nonzero presheaf $A$ must have $Ac \neq 0$ for some $c \in \mathcal{C}$, the collection $\{\, (-,c) \,\}_{c \in \mathcal{C}}$ is a family of enough projectives.
\end{lem}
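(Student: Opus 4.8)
The plan is to establish the displayed isomorphism as a linearized instance of the Yoneda lemma, and then to read off the two consequences. Write $\mathrm{ev}_c \colon \mathcal{A} \to \Ab$ for evaluation at $c$, where $\mathcal{A} = [\mathcal{C}^{op}, \Ab]$; I want $\Hom_{\mathcal{A}}((-,c), -) \cong \mathrm{ev}_c$ as functors $\mathcal{A} \to \Ab$. In one direction, send a morphism of presheaves $\eta \colon (-,c) \to A$ to $\eta_c(\mathrm{id}_c) \in Ac$; this is $\mathbb{Z}$-linear in $\eta$ and natural in $A$. For the inverse, given $a \in Ac$, define $\eta^a_\gamma \colon \mathbb{Z} \cdot \Hom(\gamma,c) \to A\gamma$ on generators by $f \mapsto (Af)(a)$ and extend $\mathbb{Z}$-linearly.

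The substance is two naturality checks, and the only real care is to keep the contravariance straight, since $(-,c)$ is a presheaf and a map $f \colon \gamma \to c$ acts on it by precomposition. That each $\eta^a$ is itself a morphism of presheaves follows from functoriality of $A$ and the composition law: for $g \colon \gamma' \to \gamma$ one computes $A(g)\,\eta^a_\gamma(f) = A(g)A(f)(a) = A(f \circ g)(a) = \eta^a_{\gamma'}(f \circ g)$. That the two assignments are mutually inverse reduces to the observation that every $f \in \Hom(\gamma,c)$ is the image of $\mathrm{id}_c$ under the structure map of $(-,c)$ indexed by $f$, so naturality of an arbitrary $\eta$ forces $\eta_\gamma(f) = (Af)(\eta_c(\mathrm{id}_c))$. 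This is bookkeeping rather than genuine difficulty; indeed, there is no serious obstacle anywhere in the lemma, only the need to track variance correctly.

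With the isomorphism in hand, projectivity is immediate: $\Hom_{\mathcal{A}}((-,c), -)$ is identified with $\mathrm{ev}_c$, which is exact because kernels and cokernels in $\mathcal{A}$ are formed pointwise, and an exact $\Hom$-functor is precisely the definition of a projective object. For the final clause I would exhibit, for any presheaf $A$, the canonical epimorphism $\bigoplus_{c} \bigoplus_{a \in Ac} (-,c) \twoheadrightarrow A$ whose $(c,a)$-component is $\eta^a$; it is surjective pointwise, since the summand indexed by $(\gamma, b)$ sends $\mathrm{id}_\gamma \mapsto b$ for any $b \in A\gamma$, hence it is an epimorphism in $\mathcal{A}$. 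Because coproducts in $\mathcal{A}$ are likewise computed and exact pointwise, this coproduct of projectives is again projective, so $\mathcal{A}$ has enough projectives. The one place worth a second glance is this appeal to pointwise-exact coproducts, but it holds for every presheaf category valued in $\Ab$.
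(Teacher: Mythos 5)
Your proposal is correct, and it follows the same route the paper takes (the paper gives no separate proof; its justification is embedded in the statement): the linearized Yoneda isomorphism $\Hom((-,c),A)\simeq Ac$, projectivity of $(-,c)$ from exactness of pointwise evaluation, and the representables supplying enough projectives. Your only addition is to spell out the details the paper leaves implicit --- the naturality checks and the canonical epimorphism $\bigoplus_{c}\bigoplus_{a\in Ac}(-,c)\twoheadrightarrow A$, which makes explicit the ``enough projectives'' claim that the paper phrases as detection of nonzero presheaves.
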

\noindent
From Lemma \ref{lem:yoneda}, we compute
$$
\Hom((-,\Gamma), (-,\Gamma')) = \begin{cases}
 \mathbb{Z} & \mbox{ if $\Gamma \subseteq \Gamma'$} \\
 0 & \mbox{ otherwise. }
\end{cases}
$$
It follows that a map from one direct sum of representable presheaves to another may be given by an integer matrix with certain forced zeros.  For example, and to solidify conventions, set $n=3$ and consider the map of presheaves
$$
(-, \tinytriangleab) \oplus (-, \tinytriangleabacbc) \xrightarrow{\left[\begin{array}{ccc}
 2 & 0 & 3 \\
 0 & 0 & 5 \\
\end{array}\right]} (-, \tinytriangleabac) \oplus (-, \tinytriangleac) \oplus (-, \tinytriangleabacbc).
$$
The three zeros appearing in this matrix are forced.  In abbreviated form, this map reads
$$
\begin{blockarray}{cccc} & \tinytriangleabac & \tinytriangleac & \tinytriangleabacbc \\
 \begin{block}{c[ccc]}
 \tinytriangleab & 2 & 0 & 3 \\
 \tinytriangleabacbc & 0 & 0 & 5 \\
 \end{block}
 \end{blockarray}.
$$
Our chain complexes are concentrated in nonnegative degree, and use homological grading conventions so that the differential decreases the degree by one.  A typical complex $C_\bullet$ looks like
$$
\cdots \longrightarrow \bullet_3 \overset{\partial_2}{\longrightarrow} \bullet_2 \overset{\partial_1}{\longrightarrow} \bullet_1 \overset{\partial_0}{\longrightarrow} \bullet_0,
$$
where the differentials $\partial_i$ are usually written out with explicit matrices.  We display only the nonzero degrees and differentials.
We write $C_\bullet[1]$ for the shifted complex
$$
\cdots \overset{\partial_2}{\longrightarrow} \bullet_3 \overset{\partial_1}{\longrightarrow} \bullet_2 \overset{\partial_0}{\longrightarrow} \bullet_1,
$$
and similarly $C_{\bullet}[k]$ for higher shifts.  If $A \in \mathcal{A}$ is a presheaf, we write $A[k]$ for $A$ considered as a complex concentrated in degree $k$.  Similarly, if $\Gamma \in \mathcal{G}(n)$ is a graph, we write $\Gamma[k]$ for $(-, \Gamma)[k]$.

In the projective model structure on $D_+(\mathcal{A})$, a chain complex is cofibrant if it is projective in every degree.  For example, if $V \in \mathcal{A}$ is a presheaf, then a cofibrant model for $V[0]$ is the same as a projective resolution of $V$.

Our conventions are chosen to make chain complexes typographically compact.  For example, if $V$ is the skyscraper presheaf that takes the value $\mathbb{Z}$ on $\tinytriangleabacbc$ and $0$ elsewhere, a cofibrant model for $V[0]$ is given by the projective resolution 
$$
\bullet_3 \xrightarrow{\begin{blockarray}{cccc} & \tinytriangleab & \tinytriangleac & \tinytrianglebc\\
\begin{block}{c[ccc]}
\tinytriangle & 1 &  - 1 & 1\\
\end{block}
\end{blockarray}
} \bullet_2 \xrightarrow{\begin{blockarray}{cccc} & \tinytriangleabac & \tinytriangleabbc & \tinytriangleacbc\\
\begin{block}{c[ccc]}
\tinytriangleab & 1 &  - 1 & 0\\
\tinytriangleac & 1 & 0 &  - 1\\
\tinytrianglebc & 0 & 1 &  - 1\\
\end{block}
\end{blockarray}} \bullet_1 \xrightarrow{\begin{blockarray}{cc} & \tinytriangleabacbc\\
\begin{block}{c[c]}
\tinytriangleabac & 1\\
\tinytriangleabbc & 1\\
\tinytriangleacbc & 1\\
\end{block}
\end{blockarray}} \bullet_0.
$$
\subsection{Building cofibrant models}
We begin with a general lemma.
\begin{lem} \label{lem:cofibrant}
Suppose that $\{\;\mathcal{U}_i\;\}_{i \in I}$ is a good cover of $X^n$ indexed by a totally-ordered set $I$.  Assume that every graphical configuration space $\Conf(\Gamma, X) \subseteq X^n$ may be obtained as a union of opens drawn from this cover.  Equivalently, for every graph $\Gamma \in \mathcal{G}(n)$, 
$$
\Conf(\Gamma, X) = \bigcup_{i \in S_{\Gamma}} \mathcal{U}_i
$$
where 
$$
S_{\Gamma} = \{ \mbox{ $i \in I$ so that $\mathcal{U}_i \subseteq \Conf(\Gamma, X)$ } \}.
$$
Then, a cofibrant model for the functor $C_\bullet^{sing} \Conf(-, X)$ is given in degree $p$ by the projective presheaf
$$
\bigoplus_{\substack{i_0 < \cdots < i_p\\ \mathcal{U}_{i_0} \cap \cdots \cap\, \mathcal{U}_{i_p} \neq \emptyset }} (-,g(i_0, \ldots, i_p)),
$$
where $g(i_0, i_1, \ldots, i_p)$ is the largest graph $\Gamma$ for which $\mathcal{U}_{i_r} \subseteq \Conf(\Gamma, X)$ for all $r \in \{0, \ldots, p\}$.  The differentials are given in the usual way, where the entry in row $j_0 < \cdots < j_{p+1}$ and column $i_0 < \cdots < i_{p}$ is $(-1)^l$ if
$$
(i_0 < \cdots < i_{p}) = (j_0 < \cdots < \widehat{j_l} < \cdots < j_{p+1})
$$
for some $l \in \{0, \ldots, p+1\}$, and zero otherwise.
\end{lem}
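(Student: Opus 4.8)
The plan is to evaluate the proposed complex of presheaves at each graph $\Gamma \in \mathcal{G}(n)$, recognize the result as the simplicial chain complex of the nerve of the induced cover of $\Conf(\Gamma, X)$, and then upgrade the nerve lemma to a statement that is natural in $\Gamma$. First I would unwind the combinatorics of $g(i_0, \ldots, i_p)$. Since $\Conf(\Gamma_1, X) \cap \Conf(\Gamma_2, X) = \Conf(\Gamma_1 \cup \Gamma_2, X)$, the collection of graphs $\Gamma$ with $\mathcal{U}_i \subseteq \Conf(\Gamma, X)$ is closed under union, so each $\mathcal{U}_i$ admits a largest such graph $\Gamma_i$, and one checks that $g(i_0, \ldots, i_p) = \Gamma_{i_0} \cap \cdots \cap \Gamma_{i_p}$ (edge-intersection). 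By the $\Hom$-computation following Lemma \ref{lem:yoneda}, the representable $(-, g(i_0, \ldots, i_p))$ takes the value $\mathbb{Z}$ at $\Gamma$ exactly when $\Gamma \subseteq g(i_0, \ldots, i_p)$, i.e.\ when $\mathcal{U}_{i_r} \subseteq \Conf(\Gamma, X)$ for every $r$, i.e.\ when $i_0, \ldots, i_p \in S_\Gamma$.

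Consequently the degree-$p$ term of the proposed complex, evaluated at $\Gamma$, is the free abelian group on the $p$-simplices $i_0 < \cdots < i_p$ of the nerve $N_\Gamma$ of the good cover $\{\mathcal{U}_i\}_{i \in S_\Gamma}$ of $\Conf(\Gamma, X)$, and the prescribed alternating-sign differential is precisely the simplicial boundary of $N_\Gamma$. Along the way this confirms that the differential is a legal presheaf map: deleting an index only enlarges $g$, so each nonzero matrix entry connects a representable to one represented by a \emph{larger} graph, where the relevant $\Hom$-group is indeed $\mathbb{Z}$. Cofibrancy is then immediate, since every summand $(-, g(i_0, \ldots, i_p))$ is a linearized representable, hence projective by Lemma \ref{lem:yoneda}; the complex is degreewise projective and bounded below, hence cofibrant in the projective model structure on $D_+(\mathcal{A})$.

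It remains to produce a natural quasi-isomorphism from the proposed complex to $C_\bullet^{sing}\Conf(-, X)$, which is the crux of the argument. Here I would interpose the Mayer--Vietoris (Čech--singular) double complex of presheaves
\[
\check{C}_{p,q}(\Gamma) \;=\; \bigoplus_{\substack{i_0 < \cdots < i_p \\ i_0, \ldots, i_p \in S_\Gamma}} C_q^{sing}\!\left(\mathcal{U}_{i_0} \cap \cdots \cap \mathcal{U}_{i_p}\right).
\]
Because a morphism $\Gamma \subseteq \Gamma'$ shrinks $S_{\Gamma'} \subseteq S_\Gamma$, this is a functor on $\mathcal{G}(n)^{op}$. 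Filtering in the singular direction and invoking the small-simplices theorem (the generalized Mayer--Vietoris comparison) yields a natural quasi-isomorphism $\Tot \check{C}_{\bullet,\bullet}(\Gamma) \simeq C_\bullet^{sing}\Conf(\Gamma, X)$; filtering in the Čech direction and using that the cover is \emph{good}---so that each nonempty intersection $\mathcal{U}_{i_0} \cap \cdots \cap \mathcal{U}_{i_p}$ is contractible and its column collapses to $\mathbb{Z}$ in degree $0$---identifies the other spectral sequence with the nerve complex $C_\bullet(N_\Gamma)$, which by the previous paragraph is exactly the proposed complex evaluated at $\Gamma$. The resulting zigzag of objectwise quasi-isomorphisms is natural in $\Gamma$, and combined with cofibrancy this exhibits the proposed complex as a cofibrant model for $C_\bullet^{sing}\Conf(-, X)$.

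The main obstacle is this last step: arranging the Čech--nerve comparison to be natural in the diagram variable $\Gamma$ all at once, rather than space by space. The good-cover hypothesis is precisely what forces the Čech-direction collapse onto constant $\mathbb{Z}$-coefficients, and the small-simplices theorem is what ties the Čech total complex back to honest singular chains; the care required is in checking that the augmentation and both filtrations respect the restriction maps $S_{\Gamma'} \subseteq S_\Gamma$ coherently, so that a single zigzag serves every $\Gamma$ simultaneously.
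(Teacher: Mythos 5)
Your proposal is correct, and its first half---the identification $g(i_0,\ldots,i_p)=\Gamma_{i_0}\cap\cdots\cap\Gamma_{i_p}$, the observation that evaluating the proposed complex at $\Gamma$ yields exactly the simplicial chain complex of the nerve of the subcover $\{\mathcal{U}_i\}_{i\in S_\Gamma}$, and cofibrancy from degreewise projectivity via Lemma \ref{lem:yoneda}---is precisely the combinatorial content underlying the paper's argument. Where you diverge is at the crux, the natural quasi-isomorphism. The paper stays at the level of spaces: it applies the nerve lemma to $X^n$ and to each $\Conf(\Gamma,X)$ (whose good cover has nerve the full subcomplex on $S_\Gamma$), and then appeals to ``functoriality of the \v{C}ech nerve'' to promote these objectwise homotopy equivalences to an equivalence of $\mathcal{G}(n)^{op}$-diagrams, only afterwards taking chains. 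You instead prove the comparison directly at the chain level, interposing the \v{C}ech--singular double complex and running its two filtrations: goodness of the cover collapses the \v{C}ech direction onto nerve chains, while the generalized Mayer--Vietoris/small-simplices theorem collapses the singular direction onto $C_\bullet^{sing}$, with naturality read off from the inclusions $S_{\Gamma'}\subseteq S_\Gamma$. Your route is longer but more self-contained: it supplies a proof of exactly the point the paper compresses into ``functoriality of the \v{C}ech nerve''---that the nerve-lemma equivalences can be chosen compatibly across all $\Gamma$ simultaneously---and it only ever requires a zigzag of objectwise quasi-isomorphisms in the derived category, never a strictly natural space-level homotopy equivalence. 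The paper's proof buys brevity by citing the space-level nerve lemma; yours buys explicitness on the naturality, which is genuinely the delicate step of the lemma.
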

\begin{proof}
By the nerve lemma, $X^n$ is homotopy equivalent to the \v{C}ech nerve of its good cover $\{\;\mathcal{U}_i\;\}_{i \in I}$, which is an abstract simplicial complex on the vertex set $I$.  For each graph $\Gamma$, the full subcomplex on the vertices $S_\Gamma$ is homotopy equivalent to $\Conf(\Gamma, X)$, once again by the nerve lemma.  By functoriality of the \v{C}ech nerve, the $\mathcal{G}(n)^{op}$-filtration on the simplicial complex model for $X^n$ has the homotopy type of $\Conf(-, X)$, and so the $\mathcal{G}(n)^{op}$-filtered complex produced above is a representative for $C_\bullet^{sing} \Conf(-, X)$ considered as an object of the derived category.
\end{proof}

\begin{rem} \label{rem:wg17}
If $X$ is a simplicial complex, then a good cover of $X^n$ satisfying the hypotheses of Lemma \ref{lem:cofibrant} may be built from the open stars of faces in the usual triangulation of $X^n$.  We give a detailed account of this construction for ordinary, non-graphical configuration space in \cite{WG17}.
\end{rem}

\begin{ex} \label{ex:interval}
Let $X=[0,1]$ be the closed unit interval, and define a good cover of $X^2$
\begin{align*}
\mathcal{U}_1 &= \{(x_1, x_2) \in X^2 \mid x_1 < x_2\} \\
\mathcal{U}_2 &= X^2 \\
\mathcal{U}_3 &= \{(x_1, x_2) \in X^2 \mid x_1 > x_2\}.
\end{align*}
Note that
$$
\Conf(\,\tinyintervalab\;, X) = \mathcal{U}_1 \cup \mathcal{U}_3,
$$
and so the hypotheses of Lemma \ref{lem:cofibrant} are satisfied.
The resulting simplicial complex model for $X^2$ has vertices $\{1,2,3\}$ and edges $\{12,23\}$.  We obtain the following cofibrant model for the singular chains in $\Conf(-,X)$:
$$
\bullet_1 \xrightarrow{\begin{blockarray}{cccc} & \tinyintervalab & \tinyinterval & \tinyintervalab \\
\begin{block}{c[ccc]}
\tinyinterval & -1 &  1 & 0\\
\tinyinterval & 0 & -1 &  1\\
\end{block}
\end{blockarray}} \bullet_0.
$$
\end{ex}
Having found a cofibrant model, it makes sense to ask if there is a smaller cofibrant model.  We describe one common way this can happen.  If row $r$ and column $c$ of $\partial_i$ are both labeled by the same graph $\Gamma$, and if the entry in position $(r,c)$ is $\pm1$, then elementary row and column operations suffice to eliminate all other entries in that row and column.  After this change of basis, we obtain a summand of the form
$$
\bullet_{i+1} \xrightarrow{\begin{blockarray}{cc} & \Gamma \\
\begin{block}{c[c]}
\Gamma & \pm1 \\
\end{block}
\end{blockarray}} \bullet_i,
$$
which is evidently zero in the derived category.  This process of splitting off an acyclic summand from $\partial_i$ is called pruning, and $\partial_i$ is said to be \textbf{prunable}.
We introduce a more general pruning lemma for chain complexes of presheaves on a poset.

A matrix $M$ is said to be \textbf{label-homogenous} if at most one graph appears among its row and column labels.
\begin{lem} \label{lem:prune}
A differential $\partial$ is prunable if it has a label-homogenous submatrix with $1$ as a Smith invariant factor.
\end{lem}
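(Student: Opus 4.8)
The plan is to reduce the stated criterion to the elementary pruning move already described, in which a lone $\pm 1$ entry occupies a position whose row and column carry a common label $\Gamma$. The device that accomplishes the reduction is the Smith normal form, applied to the label-homogeneous submatrix $M$; the one thing that must be checked is that the integer row and column operations witnessing the Smith form can be realized as genuine automorphisms of the presheaf-level chain complex, so that they produce an isomorphic object of $D_+(\mathcal{A})$.

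First I would isolate the structural fact that makes everything work. By Lemma \ref{lem:yoneda} and the computation following it, $\Hom((-,\Gamma),(-,\Gamma)) = \mathbb{Z}$, so the endomorphism ring of a direct sum $(-,\Gamma)^{\oplus k}$ of copies of a single representable is the full matrix ring $\mathrm{Mat}_k(\mathbb{Z})$ and its automorphism group is $GL_k(\mathbb{Z})$. Since $\Gamma \subseteq \Gamma$, no entry of such a block is among the forced zeros, and this is exactly the content of $M$ being label-homogeneous: all of its rows and columns carry the same label, so $M$ ranges over an unconstrained integer matrix and any element of $GL(\mathbb{Z})$ may act on it.

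Now let $M$ be the label-homogeneous submatrix of $\partial$, say of size $a \times b$ with every row and column labeled $\Gamma$, and suppose $1$ occurs among its invariant factors. The Smith normal form supplies $U \in GL_a(\mathbb{Z})$ and $V \in GL_b(\mathbb{Z})$ so that $UMV$ is diagonal with a $1$ in its leading entry. I would read $U$ and $V$, extended by the identity on the complementary non-$\Gamma$ summands, as automorphisms of the $\Gamma$-isotypic part of the target and source of $\partial$; by the preceding paragraph these are bona fide automorphisms of the free presheaves in the two relevant degrees. Changing basis by them yields an isomorphic complex in $D_+(\mathcal{A})$ whose differential now has a $\pm 1$ sitting in a spot whose row and column are both labeled $\Gamma$. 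The elementary pruning move then applies directly: further row and column operations clear the rest of that row and column, while $\partial^2 = 0$ forces the corresponding row and column of the two neighboring differentials to vanish, so the contractible summand $\Gamma \to \Gamma$ splits off and $\partial$ is prunable.

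The step that deserves the most care is the legitimacy of this change of basis at the presheaf level. The subtlety is that multiplying the $\Gamma$-labeled rows by $U$ also recombines the entries of $\partial$ lying in $\Gamma$-rows but non-$\Gamma$ columns (and dually for $V$); one must be sure this introduces no forbidden nonzero entry. This is guaranteed precisely because the operation mixes only same-label generators: the forced-zero pattern of a row depends only on its label, which is the common $\Gamma$ throughout the block, so an integer combination of rows obeying a given constraint still obeys it. Equivalently, and more cleanly, $U$ and $V$ are honest presheaf automorphisms, so post- and pre-composing $\partial$ with them automatically lands back among presheaf morphisms; the only residual bookkeeping is that the adjacent differentials $\partial_{i\pm 1}$ get re-expressed in the new bases, which does not alter the derived isomorphism type.
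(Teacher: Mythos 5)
Your proof is correct and follows essentially the same route as the paper's: change basis via the Smith normal form to place an invariant factor $1$ in $\partial$, clear its row and column by further elementary operations, and split off the resulting acyclic summand $(-,\Gamma) \xrightarrow{\pm 1} (-,\Gamma)$. The extra care you take—checking that label-homogeneity makes the $GL(\mathbb{Z})$ basis changes genuine presheaf automorphisms, and that $\partial^2=0$ forces the neighboring differentials to vanish on the pivot summands—is exactly the justification the paper's terse proof leaves implicit.
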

\begin{proof}
Change the basis so that the invariant factor $1$ appears in $\partial$.  We may then make further elementary row and column operations so that this $1$ is the only nonzero entry in its row and column.  
After the invariant factor $1$ is alone in its row and column, we may now drop that row and column, since they form an acyclic summand.
\end{proof}
\begin{ex}[Pruning] \label{ex:pruning}
We apply Lemma \ref{lem:prune} to the cofibrant model produced in Example \ref{ex:interval}.  The center column is label-homogeneous, and has $1$ as a Smith factor, so we may prune.  First, change the row basis to get a zero in the lower middle:
$$
\bullet_1 \xrightarrow{\begin{blockarray}{cccc} & \tinyintervalab & \tinyinterval & \tinyintervalab \\
\begin{block}{c[ccc]}
\tinyinterval & -1 &  1 & 0\\
\tinyinterval & -1 & 0 &  1\\
\end{block}
\end{blockarray}} \bullet_0.
$$
Now, change the column basis to get a zero in the upper left:
$$
\bullet_1 \xrightarrow{\begin{blockarray}{cccc} & \tinyintervalab & \tinyinterval & \tinyintervalab \\
\begin{block}{c[ccc]}
\tinyinterval & 0 &  1 & 0\\
\tinyinterval & -1 & 0 &  1\\
\end{block}
\end{blockarray}} \bullet_0.
$$
We are left with the pruned chain model
$$
\bullet_1 \xrightarrow{\begin{blockarray}{ccc} & \tinyintervalab & \tinyintervalab \\
\begin{block}{c[cc]}
\tinyinterval & -1 &  1\\
\end{block}
\end{blockarray}} \bullet_0.
$$
\end{ex}

Finally, we explain how to build a cofibrant model for graphical configuration space in a product.  Note that a model for chains in the product
$$
\Conf(-,X) \times \Conf(-,Y)
$$
is given by the outer tensor product of chain models, and is cofibrant as a chain complex of presheaves on $\mathcal{G}(n) \times \mathcal{G}(n)$.  We may then easily apply $\mathbb{L}(U_n^{op})_!$ using the rule $(U_n^{op})_!(-, (\Gamma', \Gamma'')) = (-,U_n(\Gamma', \Gamma''))$.

\begin{ex} \label{ex:prodmodel}
Using the pruned complex from Example \ref{ex:pruning}, we compute a cofibrant model for singular chains in $\Conf(-,[0,1]^2)$.  Tensoring the pruned model with itself, we obtain
$$
\bullet_2 \xrightarrow{\begin{blockarray}{ccccc} & (\tinyinterval \hspace{-2pt}\times \hspace{-2pt}\tinyintervalab) & (\tinyinterval \hspace{-2pt}\times\hspace{-2pt} \tinyintervalab) & (\tinyintervalab \hspace{-2pt}\times\hspace{-2pt} \tinyinterval) &(\tinyintervalab \hspace{-2pt}\times\hspace{-2pt} \tinyinterval) \\
\begin{block}{c[cccc]}
(\tinyinterval \hspace{-2pt}\times\hspace{-2pt} \tinyinterval) & -1 &  1 & 1 & -1\\
\end{block}
\end{blockarray}} \bullet_1 \xrightarrow{\begin{blockarray}{ccccc} & (\tinyintervalab \hspace{-2pt}\times\hspace{-2pt} \tinyintervalab) & (\tinyintervalab\hspace{-2pt} \times \hspace{-2pt}\tinyintervalab) & (\tinyintervalab\hspace{-2pt} \times\hspace{-2pt} \tinyintervalab) &(\tinyintervalab\hspace{-2pt} \times\hspace{-2pt} \tinyintervalab) \\
\begin{block}{c[cccc]}
(\tinyinterval \hspace{-2pt}\times \hspace{-2pt}\tinyintervalab) & -1 &  0 & 1 & 0\\
(\tinyinterval \hspace{-2pt}\times \hspace{-2pt}\tinyintervalab) & 0 &  -1 & 0 & 1\\
(\tinyintervalab \hspace{-2pt}\times \hspace{-2pt}\tinyinterval) & -1 & 1 & 0 & 0 \\
(\tinyintervalab \hspace{-2pt}\times \hspace{-2pt}\tinyinterval) & 0 & 0 & -1 & 1 \\
\end{block}
\end{blockarray}} \bullet_0.
$$
Applying the union functor $U_2$ to each matrix, we obtain 
$$
\bullet_2 \xrightarrow{\begin{blockarray}{ccccc} & \tinyintervalab & \tinyintervalab & \tinyintervalab &\tinyintervalab  \\
\begin{block}{c[cccc]}
\tinyinterval & -1 &  1 & 1 & -1\\
\end{block}
\end{blockarray}} \bullet_1 \xrightarrow{\begin{blockarray}{ccccc} & \tinyintervalab & \tinyintervalab& \tinyintervalab& \tinyintervalab \\
\begin{block}{c[cccc]}
\tinyintervalab & -1 &  0 & 1 & 0\\
\tinyintervalab & 0 &  -1 & 0 & 1\\
\tinyintervalab & -1 & 1 & 0 & 0 \\
\tinyintervalab & 0 & 0 & -1 & 1 \\
\end{block}
\end{blockarray}} \bullet_0.
$$
This complex is then a chain model for $\Conf(-, [0,1]^2)$ by Theorem \ref{thm:pointwise}.  
\end{ex}
\begin{rem} 
It is good computational hygene to apply Lemma \ref{lem:prune} after every algebraic construction, since the subsequent savings are often considerable.  Moreover, the pruned complexes may be so small as to be recognizable, as is the case in our proof of Theorem~\ref{thm:torus-mod-rotation}.  In that proof, every encountered complex prunes to a sum of complexes drawn from a list of four previously-identified building blocks.
\end{rem}

The next result explains how to compute $H_\bullet(\Conf(n, X) \,;\;\mathbb{Z})$ from a chain model for the functor $\Conf(-,X)$.  Let $K \colon \mathcal{G}(n) \to \Ab$ be the skyscraper functor given by
$$
K\Gamma = \begin{cases}
 \mathbb{Z} & \mbox{ if $\Gamma$ is the complete graph $K_n$} \\
 0 & \mbox{ otherwise. }
\end{cases}
$$

\begin{prop}
If
$$
\cdots \overset{\partial}{\longrightarrow}  \bullet_3 \overset{\partial}{\longrightarrow}  \bullet_2 \overset{\partial}{\longrightarrow} \bullet_1 \overset{\partial}{\longrightarrow}  \bullet_0
$$
is a chain model for $\Conf(-,X) \colon \mathcal{G}^{op} \to \Top$, then
$$
\cdots \overset{K\partial}{\longrightarrow}  \bullet_3 \overset{K\partial}{\longrightarrow}  \bullet_2 \overset{K\partial}{\longrightarrow} \bullet_1 \overset{K\partial}{\longrightarrow}  \bullet_0
$$
is a chain complex of free abelian groups that computes $H_\bullet(\Conf(n, X) \,;\;\mathbb{Z})$.
\end{prop}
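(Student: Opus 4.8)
The plan is to recognize the operation ``apply $K$'' as evaluation of presheaves at the complete graph $K_n$, and then to exploit that this evaluation is exact. First I would record the two facts that make everything go. On the topological side, unwinding the definition of graphical configuration space shows $\Conf(K_n, X) = \Conf(n,X)$, since every pair of distinct vertices of the complete graph is adjacent; equivalently, the ordinary configuration space is the value of the functor $\Conf(-,X)$ at the top element $K_n \in \mathcal{G}(n)$. On the algebraic side, I would observe that $K$ is precisely the covariant representable $\mathbb{Z}\Hom_{\mathcal{G}(n)}(K_n, -)$: indeed $\Hom_{\mathcal{G}(n)}(K_n, \Gamma)$ is a single point when $\Gamma = K_n$ and empty otherwise, since $K_n$ is maximal in the inclusion poset.

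Next I would identify the functor that the Proposition applies to the chain model. Pairing a presheaf $A \in \mathcal{A}$ with the module $K$ via the tensor product $A \otimes_{\mathcal{G}(n)} K$ (the coend $\int^{\Gamma} A\Gamma \otimes_{\mathbb{Z}} K\Gamma$), the co-Yoneda lemma---the tensor-dual of Lemma \ref{lem:yoneda}---gives $A \otimes_{\mathcal{G}(n)} K \cong A(K_n)$; in particular $(-,\Gamma) \otimes_{\mathcal{G}(n)} K \cong K\Gamma$ on a representable, which is $\mathbb{Z}$ when $\Gamma = K_n$ and $0$ otherwise. This is exactly the recipe producing the second complex: each degree $\bigoplus_i (-, \Gamma_i)$ becomes $\bigoplus_i K\Gamma_i$, a direct sum of copies of $\mathbb{Z}$ (hence a free abelian group), and the differential becomes $K\partial$, namely the integer submatrix of $\partial$ on the rows and columns labelled by $K_n$. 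Since evaluation of a presheaf at an object is exact, this functor needs no deriving.

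With these identifications the computation follows formally. By hypothesis the chain model $P_\bullet$ is a complex of projective presheaves quasi-isomorphic to $C_\bullet^{sing}\Conf(-,X)$ in $D_+(\mathcal{A})$. Applying the exact functor $\mathrm{ev}_{K_n} = -\otimes_{\mathcal{G}(n)} K$ preserves quasi-isomorphisms (an exact functor commutes with passage to homology), so $KP_\bullet$ is quasi-isomorphic to $\mathrm{ev}_{K_n} C_\bullet^{sing}\Conf(-,X)$. The latter is $C_\bullet^{sing}$ of the space $\Conf(-,X)(K_n) = \Conf(n,X)$, whose homology is by definition $H_\bullet(\Conf(n, X)\,;\,\mathbb{Z})$. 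Therefore $H_\bullet(KP_\bullet) \cong H_\bullet(\Conf(n, X)\,;\,\mathbb{Z})$, as claimed.

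There is almost no genuine obstacle once the two identifications are in place; the one point deserving care is the justification that one may apply the underived functor $K$ directly to the chain model rather than its left derived version. This is legitimate precisely because evaluation at an object of a presheaf category is exact---so its left derived functor agrees with it on every complex---and it is what lets the freeness (values of $K$ lie in $\{0,\mathbb{Z}\}$) and the correctness of the homology hold simultaneously. A minor bookkeeping check is that the forced zeros of $\partial$ and the vanishing of $K$ on non-complete graphs interact correctly, so that $K\partial$ really is the $K_n$-labelled submatrix and still squares to zero; this is immediate from functoriality.
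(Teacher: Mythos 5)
Your proof is correct and follows essentially the same route as the paper: both identify $K$ with the representable functor $(K_n,-)$, invoke (co-)Yoneda to see that tensoring with $K$ is evaluation at $K_n$, and note that this amounts to taking the $K_n$-labelled submatrices of the differentials. The only difference is that you spell out explicitly the exactness of evaluation (hence no deriving is needed), a point the paper leaves implicit.
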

\begin{proof}
The functor $K$ coincides with the representable functor $(K_n, -)$.  By Yoneda's lemma, the functor tensor product satisfies $(K_n, -) \otimes_{\mathcal{G}(n)} V \simeq VK_n$, and so we may evaluate at $K_n$ by tensoring with the functor $K$.  Computationally, tensoring a matrix with $K$ amounts to applying $K$ to every entry.
\end{proof}

\begin{ex}
Applying $K$ to the model from Example \ref{ex:prodmodel}, we obtain the chain complex
$$
0 \longrightarrow 
 \mathbb{Z}^4 \xrightarrow{\left[
\begin{array}{cccc}
 -1 & 0 & 1 & 0 \\
 0 & -1 & 0 & 1 \\
 -1 & 1 & 0 & 0 \\
 0 & 0 & -1 & 1 \\
\end{array}
\right]} \mathbb{Z}^4.
$$
It follows that $H_1 \simeq H_0 \simeq \mathbb{Z}$.  This makes sense, since $\Conf(2, [0,1]^2)$ has the homotopy type of a circle.
\end{ex}

\begin{ex}[Triples in a product of capital letters] \label{ex:computation}
We apply the methods of this section to the computation of
$$
H_\bullet \Conf(3, X \times Y)
$$
where $X$ and $Y$ are capital letters considered as one-dimensional simplicial complexes.  We limit our table to the sans serif letters \textsf{X, Y, Z, O}.  In particular, \textsf{Z} $ \cong [0,1]$ and \textsf{O} $ \cong S^1$.

\begin{center}
\begin{tabular}{clllllll}
 & $H_0$ & $H_1$ & $H_2$ & $H_3$ & $H_4$ & $H_5$ & $H_6$ \\
\textsf{X} $\times$ \textsf{Y} & $\mathbb{Z}$ & 0 & 0 & $\mathbb{Z}^{15}$ & $\mathbb{Z}^{230}$ &  &  \\
\textsf{X} $\times$ \textsf{Z} & $\mathbb{Z}$ & 0 & $\mathbb{Z}^{15}$ & $\mathbb{Z}^{46}$ &  &  &  \\
\textsf{X} $\times$ \textsf{O} & $\mathbb{Z}$ &  $\mathbb{Z}^3$& $\mathbb{Z}^{18}$ & $\mathbb{Z}^{77}$ & $\mathbb{Z}^{61}$ &  &  \\
\textsf{Y} $\times$ \textsf{Y} & $\mathbb{Z}$ & 0 & 0 & $\mathbb{Z}^3$ & $\mathbb{Z}^{50}$ &  &  \\
\textsf{Y} $\times$ \textsf{Z} & $\mathbb{Z}$ & 0 & $\mathbb{Z}^3$ & $\mathbb{Z}^{10}$ &  &  &  \\
\textsf{Y} $\times$ \textsf{O} & $\mathbb{Z}$ & $\mathbb{Z}^3$ & $\mathbb{Z}^6$ & $\mathbb{Z}^{17}$ & $\mathbb{Z}^{13}$ &  &  \\
\textsf{Z} $\times$ \textsf{Z} & $\mathbb{Z}$ & $\mathbb{Z}^3$ & $\mathbb{Z}^2$ &  &  &  &  \\
\textsf{Z} $\times$ \textsf{O} & $\mathbb{Z}$ & $\mathbb{Z}^6$ & $\mathbb{Z}^{11}$ & $\mathbb{Z}^6$ &  &  &  \\
\textsf{O} $\times$ \textsf{O} & $\mathbb{Z}$ & $\mathbb{Z}^6$ & $\mathbb{Z}^{14}$ & $\mathbb{Z}^{14}$ & $\mathbb{Z}^{5}$ &  & 
\end{tabular}
\end{center}
In the cases where one of the factors is \textsf{X} or \textsf{Y}, this computation is new, since the only known models for configuration space in a singular two-dimensional space \cite[Theorems 1.3 and 1.12]{WG17} are impractically large.  This table was computed using Sage \cite{sage}.
\end{ex}

\section{Proof of Theorem \ref{thm:torus-mod-rotation}} \label{sec:torus-mod-circle-proof}
We apply systematically the method from \S\ref{sec:method}  to prove Theorem \ref{thm:torus-mod-rotation}.  Since the action of the torus is free on configuration space, the $T$-equivariant homology coincides with the homology of the quotient.  This allows us to work directly with the quotient.  

Let $T= \mathbb{R}/\mathbb{Z}$ be a torus of rank one.  Write $\Delta \mathbb{R} = (x,x,x) \subset \mathbb{R}^3$ for the main diagonal of Euclidean space, and similarly $\Delta \mathbb{Z}^3 \subset \mathbb{Z}^3$.  We start by building an explicit model for the functor
$$
\Conf(-,T)/T \colon \mathcal{G}(3)^{op} \to \Top.
$$
Since $T^3/T \cong \mathbb{R}^3/(\Delta \mathbb{R} + \mathbb{Z}^3)$, the universal cover of $T^3/T$ is $\mathbb{R}^3/\Delta\mathbb{R}$.  The rank two lattice $\mathbb{Z}^3/\Delta\mathbb{Z}$ acts on this cover, and we choose a regular hexagon as the fundamental domain.  The boundary points of the hexagon give configurations where one point is exactly halfway between the other two.  The vertices of the hexagon give configurations where all three points are equally spaced.

We depict thirteen open sets in $\mathbb{R}^3/\Delta\mathbb{R}$ that together give a good cover of $T^3/T$.  The black hexagonal lattice divides the plane into an infinite number of copies of the fundamental domain, and the dashed lines indicate the diagonals to be removed.
\begin{center}
\includegraphics[scale=2.85]{./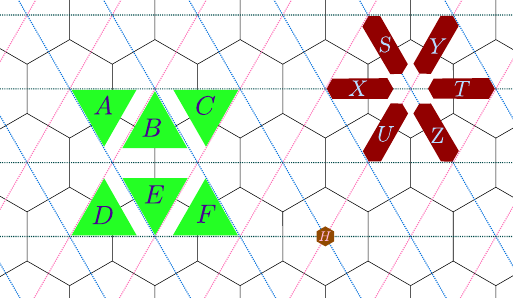}
\end{center}
The \v{C}ech nerve of this cover has facets
\begin{align*}
\{AEC, BDF, &AHX, AHS, BHS, BHY, CHY, CHT, DHX, DHU, EHU, EHZ, \\
&FHT, FHZ, BFSZ, AESZ, BDUY, CEUY, ACTX, DFTX\}.
\end{align*}
Since every graphical configuration space is available as a union of sets drawn from this cover,  Lemma \ref{lem:cofibrant} gives a cofibrant chain model.
Pruning this model using Lemma \ref{lem:prune}, we obtain the complex $M_\bullet$, given by
$$
\bullet_2 \xrightarrow{\begin{blockarray}{cccc} & \tinytriangleacbc & \tinytriangleabbc & \tinytriangleabac   \\
\begin{block}{c[ccc]}
\tinytriangle & 0 &  0 & 0 \\
\end{block}
\end{blockarray}} \bullet_1 \xrightarrow{\begin{blockarray}{cccc} & \tinytriangleabacbc & \tinytriangleabacbc \\
\begin{block}{c[ccc]}
\tinytriangleacbc & 0 &  1  \\
\tinytriangleabbc & 0 &  1\\
\tinytriangleabac & 0 & 1 \\
\end{block}
\end{blockarray}} \bullet_0.
$$
The complex splits as a direct sum of three complexes 
$$
M_\bullet = A_\bullet \oplus B_\bullet \oplus C_\bullet
$$
where $A_\bullet = \tinytriangleabacbc[0]$, $C_\bullet = \tinytriangle[2]$, and $B_\bullet$ is given by
$$
\bullet_1 \xrightarrow{\begin{blockarray}{ccc} & \tinytriangleabacbc \\
\begin{block}{c[cc]}
\tinytriangleacbc & 1  \\
\tinytriangleabbc & 1\\
\tinytriangleabac & 1 \\
\end{block}
\end{blockarray}} \bullet_0.
$$

Before entering the main calculation, we simplify notation.
\begin{defn}
If $E_\bullet, F_\bullet \in D_+(\mathcal{A})$ are complexes, write
$$
E_\bullet \odot F_\bullet = \mathbb{L}(U_3^{op})_! (E_\bullet \boxtimes F_\bullet).
$$
\end{defn}
\noindent
Observe that the functor $\odot$ distributes over direct sums, and is commutative and associative up to quasi-isomorphism.  Using the new notation, the aim of this section is to compute
$$
H_\bullet (M_\bullet^{\odot r}) = H_\bullet \; (A_\bullet \oplus B_\bullet \oplus C_\bullet)^{\odot r}.
$$
\begin{lem} \label{lem:multiplication}
We have the following multiplication table for $\odot$:
\begin{center}
\begin{tabular}{l|cccc}
$\odot$ & $A_\bullet$ & $B_\bullet$ & $C_\bullet$ & $D_\bullet$ \\
 \hline
$A_\bullet$ & $A_\bullet$ & $A_\bullet[1]^{\oplus 2}$ & $A_\bullet[2]$ & $A_\bullet[2]^{\oplus 3}$ \\
$B_\bullet$ & $A_\bullet[1]^{\oplus 2}$ & $D_\bullet \oplus A_\bullet[2]$ & $B_\bullet[2]$ & $D_\bullet[1] \oplus A_\bullet[3]^{\oplus 3}$ \\
 $C_\bullet$& $A_\bullet[2]$ &  $B_\bullet[2]$ & $C_\bullet[2]$ & $D_\bullet[2]$,
\end{tabular}
\end{center}
where $D_\bullet = \tinytriangleacbc[2] \oplus \tinytriangleabbc[2] \oplus \tinytriangleabac[2]$.
\end{lem}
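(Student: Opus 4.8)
The plan is to reduce every entry to a single rule for representables and then clean up by pruning. Using the observation that $\odot$ distributes over $\oplus$ together with the shift identity $E_\bullet[i]\odot F_\bullet[j]\simeq(E_\bullet\odot F_\bullet)[i+j]$ (which holds because $\boxtimes$ respects shifts and coproducts and $\mathbb{L}(U_3^{op})_!$ is a triangulated functor), it suffices to compute $\odot$ on the unshifted generators $K_3[0]$, $\emptyset[0]$, the three two-edge graphs $P_1,P_2,P_3$ (so that $D_\bullet=(P_1\oplus P_2\oplus P_3)[2]$), and the two-term complex $B_\bullet$. The fundamental input is that on representables the derived Kan extension is underived: since $(-,\Gamma')\boxtimes(-,\Gamma'')$ is the representable $(-,(\Gamma',\Gamma''))$ on $\mathcal{G}(3)\times\mathcal{G}(3)$, hence projective, $\mathbb{L}(U_3^{op})_!$ agrees with $(U_3^{op})_!$ and the rule $(U_3^{op})_!(-,(\Gamma',\Gamma''))=(-,\Gamma'\cup\Gamma'')$ gives
$$\Gamma'[0]\odot\Gamma''[0]\simeq(\Gamma'\cup\Gamma'')[0].$$
Everything then rests on the union table $\emptyset\cup\Gamma=\Gamma$, $K_3\cup\Gamma=K_3$, $P_i\cup P_i=P_i$, and $P_i\cup P_j=K_3$ for $i\neq j$.

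\textbf{The easy rows.} First I would dispatch the entries involving $A_\bullet=K_3[0]$ and $C_\bullet=\emptyset[2]$. Because $K_3\cup\Gamma=K_3$, the operation $A_\bullet\odot(-)$ replaces every label by $K_3$; this gives $A_\bullet\odot A_\bullet=A_\bullet$, $A_\bullet\odot C_\bullet=A_\bullet[2]$, and $A_\bullet\odot D_\bullet=A_\bullet[2]^{\oplus 3}$ immediately, while $A_\bullet\odot B_\bullet$ becomes the complex $K_3^{\oplus 3}\to K_3$ whose differential is label-homogeneous with a unit entry, so Lemma~\ref{lem:prune} prunes it to $A_\bullet[1]^{\oplus 2}$. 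Dually, because $\emptyset\cup\Gamma=\Gamma$, the operation $C_\bullet\odot(-)$ is merely a degree shift by $2$, producing the whole $C_\bullet$-row $C_\bullet\odot C_\bullet=C_\bullet[2]$, $C_\bullet\odot B_\bullet=B_\bullet[2]$, $C_\bullet\odot D_\bullet=D_\bullet[2]$.

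\textbf{The two genuine products.} The remaining entries are $B_\bullet\odot D_\bullet$ and $B_\bullet\odot B_\bullet$. For the first, distributivity reduces it to $B_\bullet\odot P_k[0]$; applying $(-\cup P_k)$ to $B_\bullet$ yields $(P_k\oplus K_3^{\oplus 2})\to K_3$, in which the two $K_3$-labeled rows and the $K_3$ target form a label-homogeneous block of Smith invariant-factor $1$, and pruning it leaves $(P_k\oplus K_3)[1]$. Summing over $k$ and shifting gives $B_\bullet\odot D_\bullet=D_\bullet[1]\oplus A_\bullet[3]^{\oplus 3}$, as claimed. The crux is $B_\bullet\odot B_\bullet$: here $B_\bullet\boxtimes B_\bullet$ has nine generators in degree $2$, six in degree $1$, one in degree $0$, and after $U_3$ the degree-$2$ generators split into three ``diagonal'' labels $P_1,P_2,P_3$ together with six copies of $K_3$, while everything in degrees $1$ and $0$ carries the label $K_3$. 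The difficulty, and the main obstacle, is that pruning must respect the poset labels---one may cancel only $K_3$ against $K_3$, so the three $P_i$-labeled generators are forced to survive. I would prune the degree-$0$ term and then, step by step, all six degree-$1$ generators against $K_3$-labeled degree-$2$ generators, tracking the induced basis changes and checking that no $P_i$-labeled generator is ever needed as a pivot.

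\textbf{Rigorous endpoint.} Rather than trust the hand pruning, I would confirm the answer by evaluating the explicit complex $(P_1\oplus P_2\oplus P_3\oplus K_3^{\oplus 6})\to K_3^{\oplus 6}\to K_3$ representing $B_\bullet\odot B_\bullet$ at each graph $\gamma\in\mathcal{G}(3)$. At $K_3$ the two differentials have image and kernel equal to the same codimension-one hyperplane, forcing $H_0=H_1=0$ and $H_2=\mathbb{Z}$; at each $P_m$ only the diagonal generator survives, giving $H_2=\mathbb{Z}$; at each single edge and at $\emptyset$ a short rank count (or K\"unneth for the free complex $B_\bullet(\emptyset)^{\otimes 2}$) gives the expected rank with $H_0=H_1=0$. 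Since $B_\bullet\odot B_\bullet$ is a bounded complex of projective presheaves with homology in degree $2$ alone, splitting the surjection onto the degree-$0$ term and then onto $\ker d_1$ shows $H_2$ is itself projective; matching its pointwise values against $(-,P_1)\oplus(-,P_2)\oplus(-,P_3)\oplus(-,K_3)$ by M\"obius inversion on $\mathcal{G}(3)$ identifies it on the nose, yielding $B_\bullet\odot B_\bullet\simeq D_\bullet\oplus A_\bullet[2]$ and completing the table.
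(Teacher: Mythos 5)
Your proposal is correct and follows essentially the same route as the paper: the paper's entire proof of Lemma~\ref{lem:multiplication} is the one line ``This is proved by pruning; see Lemma~\ref{lem:prune},'' which is precisely the argument you spell out --- the representable-union rule $(U_3^{op})_!(-,(\Gamma',\Gamma''))=(-,\Gamma'\cup\Gamma'')$ on the cofibrant outer tensor product, distributivity over sums and shifts, and label-respecting pruning of the two nontrivial products $B_\bullet\odot D_\bullet$ and $B_\bullet\odot B_\bullet$. Your supplementary pointwise-evaluation confirmation goes beyond the paper and is a nice sanity check (its final identification does quietly assume that a projective presheaf on $\mathcal{G}(3)^{op}$ over $\mathbb{Z}$ with free values must be a direct sum of representables, which deserves a word of justification), but since the pruning argument alone establishes every entry of the table, this does not affect correctness.
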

\begin{proof}
This is proved by pruning; see Lemma \ref{lem:prune}.
\end{proof}
\noindent
For notational convenience, we introduce a dummy variable $s$ that stands for ``shift.''  Given a polynomial in $s$ with nonnegative coefficients,
$$
e(s) = e_0 + e_1 s + e_2 s^2 + \cdots + e_k s^k \in \mathbb{N}[s],
$$
define
$$
E_\bullet^{\oplus e(s)} = E_\bullet^{\oplus e_0} \oplus E_\bullet[1]^{\oplus e_1} \oplus E_\bullet[2]^{\oplus e_2} \oplus \cdots \oplus E_\bullet[k]^{\oplus e_k}.
$$
In this way, we combine shifts and multiplicities into a single operation.
If $a, b, c, d \in \mathbb{N}[s]$, then define
$$
P_\bullet(a, b, c, d) =  (A_\bullet^{\oplus a}  \oplus B_\bullet^{\oplus b} \oplus C_\bullet^{\oplus c} \oplus D_\bullet^{\oplus d} ).
$$
We obtain the following rephrasing of Lemma \ref{lem:multiplication}.
\begin{cor}
If
$$
\left[
\begin{array}{cccc}
 s^2+2 s+1 & s^2+2 s & s^2 & 3 s^3+3 s^2 \\
 0 & s^2 & s^2 & 0 \\
 0 & 0 & s^2 & 0 \\
 0 & 1 & 0 & s + s^2\\
\end{array}
\right] \cdot
\left[
\begin{array}{c}
 a \\
 b \\
 c \\
 d \\
\end{array}
\right] = \left[
\begin{array}{c}
 a' \\
 b' \\
 c' \\
 d' \\
\end{array}
\right],
$$
then
$$
(A_\bullet \oplus B_\bullet \oplus C_\bullet) \odot P_\bullet(a,b,c,d) = P_\bullet(a',b',c',d').
$$
\end{cor}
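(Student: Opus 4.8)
The plan is to read the statement as a purely formal bookkeeping translation of the multiplication table of Lemma~\ref{lem:multiplication}, using only three properties of $\odot$: it distributes over $\oplus$, it is commutative and associative up to quasi-isomorphism, and it is compatible with homological shifts. The first thing I would pin down is this shift-compatibility, since it is what gives the dummy variable $s$ its meaning. From the definition $E_\bullet \odot F_\bullet = \mathbb{L}(U_3^{op})_!(E_\bullet \boxtimes F_\bullet)$, the outer tensor product satisfies $(E_\bullet[j]) \boxtimes (F_\bullet[k]) \simeq (E_\bullet \boxtimes F_\bullet)[j+k]$, and the derived functor $\mathbb{L}(U_3^{op})_!$ is exact and hence commutes with shifts; therefore $E_\bullet[j] \odot F_\bullet[k] \simeq (E_\bullet \odot F_\bullet)[j+k]$. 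Consequently, multiplication by $s$ on an $\mathbb{N}[s]$-valued multiplicity corresponds exactly to applying $[1]$, so the notation $E_\bullet^{\oplus e(s)}$ is consistent with $\odot$ and the claimed matrix can be read entry-by-entry as a shifted multiplicity.

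Next I would expand the left-hand side by distributivity. Writing out $P_\bullet(a,b,c,d) = A_\bullet^{\oplus a} \oplus B_\bullet^{\oplus b} \oplus C_\bullet^{\oplus c} \oplus D_\bullet^{\oplus d}$, the product $(A_\bullet \oplus B_\bullet \oplus C_\bullet) \odot P_\bullet(a,b,c,d)$ splits as a direct sum of twelve summands $X_\bullet \odot Y_\bullet$ with $X \in \{A,B,C\}$ and $Y \in \{A,B,C,D\}$, each weighted by the polynomial in $(a,b,c,d)$ attached to $Y$. I would then replace every $X_\bullet \odot Y_\bullet$ by its value from Lemma~\ref{lem:multiplication}.

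The final step is to collect the resulting summands according to their isomorphism type. The coefficient of $A_\bullet$ gathers contributions from $A \odot A$, $A \odot B$, $A \odot C$, $A \odot D$, $B \odot A$, $B \odot B$, $B \odot D$, and $C \odot A$, producing $a(1+2s+s^2) + b(2s+s^2) + cs^2 + d(3s^2+3s^3)$, which is precisely the first row of the matrix applied to $(a,b,c,d)^{\mathsf{T}}$. The coefficients of $B_\bullet$, $C_\bullet$, and $D_\bullet$ are assembled in the same way: $B_\bullet$ arises only from $B \odot C$ and $C \odot B$, giving $s^2 b + s^2 c$; $C_\bullet$ only from $C \odot C$, giving $s^2 c$; and $D_\bullet$ from the $D$-parts of $B \odot B$, $B \odot D$, and $C \odot D$, giving $b + (s+s^2)d$. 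These reproduce the remaining three, sparse rows.

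I do not expect a genuine obstacle here; the only point requiring care is organizational, namely keeping shifts and multiplicities straight while collecting like terms. The two products that split across rows of the table, namely $B \odot B \simeq D_\bullet \oplus A_\bullet[2]$ and $B \odot D \simeq D_\bullet[1] \oplus A_\bullet[3]^{\oplus 3}$, contribute simultaneously to the $A_\bullet$-row and the $D_\bullet$-row, so the verification really amounts to checking that each such piece lands in the correct entry with the correct power of $s$, and that no summand of the expansion is double-counted or dropped.
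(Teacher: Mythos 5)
Your proposal is correct and is essentially the paper's own argument: the paper offers no separate proof, presenting the corollary as an immediate ``rephrasing'' of Lemma~\ref{lem:multiplication}, which is exactly your expansion by distributivity and shift-compatibility of $\odot$ followed by collecting like terms. Your entry-by-entry bookkeeping (including the two products $B_\bullet \odot B_\bullet$ and $B_\bullet \odot D_\bullet$ that split across the $A_\bullet$- and $D_\bullet$-rows) matches the stated matrix.
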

\noindent
As a consequence, the product
$$
\left[
\begin{array}{cccc}
 s^2+2 s+1 & s^2+2 s & s^2 & 3 s^3+3 s^2 \\
 0 & s^2 & s^2 & 0 \\
 0 & 0 & s^2 & 0 \\
 0 & 1 & 0 & s + s^2 \\
\end{array}
\right]^{r-1} \cdot
\left[
\begin{array}{c}
 1 \\
 1 \\
 1 \\
 0 \\
\end{array}
\right]
$$
gives a complete description of the object $M_\bullet^{\odot r} \in D^b(\mathcal{A})$.  Each of $A_\bullet$ and $B_\bullet$ contribute $\mathbb{Z}[0]$ in homology after evaluation at the complete graph $\tinytriangleabacbc$, and so
$$
\left[
\begin{array}{cccc}
 1 & 1 & 0 & 0\\
\end{array}
\right] \cdot
\left[
\begin{array}{cccc}
 s^2+2 s+1 & s^2+2 s & s^2 & 3 s^3+3 s^2 \\
 0 & s^2 & s^2 & 0 \\
 0 & 0 & s^2 & 0 \\
 0 & 1 & 0 & s + s^2 \\
\end{array}
\right]^{r-1} \cdot
\left[
\begin{array}{c}
 1 \\
 1 \\
 1 \\
 0 \\
\end{array}
\right] = \sum_{p = 0}^{\infty} \beta_p s^p,
$$
where
$$
\beta_p = \mathrm{rk}_{\mathbb{Z}} H_p \Conf(3, T^r)/T^r.
$$
The formula for $\beta_p$ appearing in Theorem \ref{thm:torus-mod-rotation} may be deduced from this generating function by a routine induction on $r$.

\section{Proof of Theorem \ref{thm:stable}} \label{sec:stable-proof}
In this section, we prove Theorem \ref{thm:stableformula}, which is a strengthened version of Theorem \ref{thm:stable}.
\begin{lem} \label{lem:split}
Let $\mathcal{A}$ be an abelian category with enough projectives, and let $K_{\bullet} \in \Ch_\bullet(\mathcal{A})$ be a chain complex so that
\begin{itemize}
\item $H_i(K_\bullet) = 0$ unless $i \in \{0, k, 2k, \ldots \}$ and
\item for all $i$, the projective dimension of $H_i(K_\bullet)$ is at most $k-1$;
\end{itemize}
then $K_\bullet$ is quasi-isomorphic to its homology.
\end{lem}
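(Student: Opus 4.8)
The plan is to prove that $K_\bullet$ splits in the derived category $D_+(\mathcal{A})$ as the direct sum $\bigoplus_i H_i(K_\bullet)[i]$ of its homology objects, each placed in its own degree; this is exactly the assertion that $K_\bullet$ is quasi-isomorphic to its homology. I would peel off the homology one degree at a time using the canonical truncation triangles, and the entire content of the argument reduces to a single vanishing statement showing that the connecting maps of these triangles are zero.

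First I set up the mechanism. Write $d$ for the lowest degree in which $K_\bullet$ has homology. The good-truncation short exact sequence gives a distinguished triangle
$$\tau_{\geq d+1} K_\bullet \longrightarrow K_\bullet \longrightarrow \tau_{\leq d} K_\bullet \overset{\delta}{\longrightarrow} (\tau_{\geq d+1} K_\bullet)[1].$$
Since $d$ is the lowest homology degree, $\tau_{\leq d} K_\bullet \simeq H_d(K_\bullet)[d]$, and since the homology is supported on the multiples of $k$ the next nonzero homology occurs in degree $d+k$ or higher, so $\tau_{\geq d+1} K_\bullet \simeq \tau_{\geq d+k} K_\bullet$. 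A distinguished triangle whose connecting map vanishes splits, so once I show $\delta = 0$ I obtain $K_\bullet \simeq H_d(K_\bullet)[d] \oplus \tau_{\geq d+k} K_\bullet$. Iterating on the cofactor $\tau_{\geq d+k} K_\bullet$—which again has homology supported on multiples of $k$ and inherits the projective-dimension bound—peels off the homology degree by degree. The iteration is finite when $K_\bullet$ is bounded; in general the summand inclusions assemble into a single morphism $\bigoplus_i H_i(K_\bullet)[i] \to K_\bullet$ that is an isomorphism on homology in each fixed degree, since only finitely many summands contribute there, and hence a quasi-isomorphism.

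The crux is the following vanishing, which I would isolate as a sublemma: if $A \in \mathcal{A}$ has projective dimension at most $k-1$ and $M_\bullet$ is any complex with $H_i(M_\bullet) = 0$ for $i \leq k$, then $\Hom_{D(\mathcal{A})}(A[0], M_\bullet) = 0$. To see this, resolve $A$ by a complex $P_\bullet$ of projectives concentrated in degrees $0, \ldots, k-1$. A bounded-below complex of projectives is homotopy-projective, so it computes morphisms in the derived category, giving $\Hom_{D(\mathcal{A})}(A[0], M_\bullet) = \Hom_{K(\mathcal{A})}(P_\bullet, M_\bullet)$. Replacing $M_\bullet$ by its quasi-isomorphic good truncation $\tau_{\geq k+1} M_\bullet$, which is concentrated in degrees $\geq k+1$, I note that any chain map $P_\bullet \to \tau_{\geq k+1} M_\bullet$ vanishes in every degree, as the supports $\{0, \ldots, k-1\}$ and $\{k+1, k+2, \ldots\}$ are disjoint; hence the Hom group is zero.

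It remains to feed the connecting map into the sublemma. Shifting down by $d$, the relevant group is $\Hom_{D(\mathcal{A})}(H_d(K_\bullet)[0], M_\bullet)$ with $M_\bullet = (\tau_{\geq d+k} K_\bullet)[1-d]$, and a direct homology computation gives $H_i(M_\bullet) = H_{i-1+d}(K_\bullet)$, which vanishes unless $i \geq k+1$; in particular $H_i(M_\bullet) = 0$ for $i \leq k$. Since $H_d(K_\bullet)$ has projective dimension at most $k-1$ by hypothesis, the sublemma forces $\delta = 0$. I expect the only genuinely delicate point to be the bookkeeping of this shift: the suspension in the target of the connecting map supplies the crucial extra degree, and it is precisely the interaction between the homological gap of size $k$ and the length bound $k-1$ on projective resolutions that makes the supports disjoint. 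The assembly over all degrees in the unbounded case is then routine.
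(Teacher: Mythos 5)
Your proof is correct, but it takes a genuinely different route from the paper's. The paper works entirely at the chain level: for each $i$ it takes a projective resolution of $H_{ik}(K_\bullet)$ of length at most $k-1$, places it in degrees $ik, \ldots, ik+k-1$, and builds a chain map from it to $K_\bullet$ by the comparison-theorem lifting argument---exactness of $K_\bullet$ in the gap degrees supplies the lifts, and the length bound means there is nothing left to define in degrees $ik+k$ and above, where the next homology lives; the direct sum of these maps is then visibly a quasi-isomorphism from a complex that is itself quasi-isomorphic to the homology. You instead argue by obstruction theory in $D_+(\mathcal{A})$: good-truncation triangles, vanishing of the connecting map $\delta$, and the formal splitting of a distinguished triangle with zero connecting map. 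Your vanishing sublemma rests on the same underlying mechanism as the paper's construction (resolve by projectives concentrated in degrees $0,\ldots,k-1$, exploit disjointness of supports against a complex concentrated in degrees $\geq k+1$), but you deploy it to kill \emph{all} maps (an obstruction statement) where the paper uses projectivity to \emph{build} one map (an existence statement). What each buys: the paper's argument is elementary and simultaneous in all degrees, needing no derived-category formalism, no K-projectivity, and no iteration; yours needs that machinery, plus the assembly of the summand maps in the unbounded case---routine, but worth writing out via chain-level representatives out of the shifted resolutions, using that the degreewise sum is finite in each degree---but in exchange it isolates the conceptual point that the connecting maps of the truncation tower are precisely the obstructions to formality and die for degree reasons, which is the form of the argument that generalizes to the standard Ext-vanishing criteria for splitting a complex.
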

\begin{proof}
Choose a projective resolution $P_\bullet \to H_0(K_\bullet)$ that vanishes past degree $k-1$.  Since $K_\bullet$ is exact in degrees below $k$, projectivity lets us construct a map $P_\bullet \to K_\bullet$ for all degrees below $k$ in the same manner as if $K_\bullet$ were a resolution.  For all degrees $k$ and above, the complex $P_\bullet$ vanishes, and so the map must be zero anyway.  This construction produces a map $P_\bullet \to K_\bullet$ that induces an isomorphism on $H_0$.

A similar map may be constructed from a projective resolution of $H_k(K_\bullet)$ to $K_\bullet$, and so on, building a quasi-isomorphism from the direct sum of these resolutions to $K_\bullet$.  But this direct sum is evidently quasi-isomorphic to its homology, and the lemma is proved.
\end{proof}

According to foundational work of Brieskorn \cite{Brieskorn73}, the cohomology $H^\bullet(\Conf(\Gamma, \mathbb{C}), \mathbb{Z})$ is generated as a $\mathbb{Z}$-algebra by the differential forms
$$
\omega_{ij} = \frac{1}{2\pi \sqrt{-1}} \frac{dz_i - dz_j}{z_i - z_j},
$$
and every degree is a free abelian group.
Subsequent work of Orlik and Solomon \cite{OS80} determined the ideal of relations between these forms.  The relations happen on-the-nose at the cochain level, and do not require the introduction of nonzero coboundaries.  It follows that the functor
$$
\Gamma \mapsto \langle \omega_{ij} \rangle_{i\sim_{\Gamma} j} 
$$
sending a graph to its subalgebra of $\Omega^\bullet \Conf(K_n, \mathbb{C})$ provides a cochain model for the functor $C^\bullet \Conf(-,\mathbb{C})$.  The differential for this model is zero, so we might say that graphical configuration space in $\mathbb{C}$ is a ``formal'' $\mathcal{G}(n)^{op}$-space.  Write 
$$
F_i \colon \Gamma \mapsto H_{i} \Conf(-, \mathbb{C})
$$
for the homology presheaf in degree $i$, so that formality and torsion-freeness gives a quasi-isomorphism
$$
C_\bullet^{sing} \Conf(-,\mathbb{C}) \simeq  \bigoplus_{i =0}^{n-1} F_i[i].
$$

The cohomology $H^\bullet(\Conf(\Gamma, \mathbb{C}^p), \mathbb{Z})$ is simply a regrading of the case $p=1$ by work of de Longueville and Schultz \cite{dLS01}, who provide a combinatorial presentation for this ring that depends only on the intersection combinatorics of the linear arrangement associated to $\Gamma$, and not on the specific value of $p\geq 1$.  All of the cohomology is torsion-free, and concentrated in degrees that are multiples of $k=2p-1$ because the generating classes $\omega_{ij}$ sit in degree $2p-1$.  Integration of  these classes gives higher-dimensional analogs of winding numbers.

The following result remains true without its hypothesis, but our proof is structured around understanding the case of large $p$ first.  For the improved statement, see Corollary~\ref{cor:decomposition}.
\begin{prop} \label{prop:largep}
For $p$ large relative to $n$, we have a quasi-isomorphism
$$
C_\bullet^{sing} \Conf(-,\mathbb{C}^p) \simeq \bigoplus_{i =0}^{n-1} F_i[ki],
$$
where $k=2p-1$.
\end{prop}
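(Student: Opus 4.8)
The plan is to apply the splitting criterion of Lemma \ref{lem:split} to the chain complex $K_\bullet = C_\bullet^{sing} \Conf(-,\mathbb{C}^p)$, viewed as an object of $\Ch_\bullet(\mathcal{A})$ for the abelian category $\mathcal{A} = [\mathcal{G}(n)^{op}, \Ab]$. That lemma has exactly two hypotheses: that the homology of $K_\bullet$ be concentrated in degrees divisible by $k = 2p-1$, and that each homology presheaf have projective dimension at most $k-1$. Once both are verified, Lemma \ref{lem:split} produces a quasi-isomorphism from $K_\bullet$ to its homology, after which it remains only to identify that homology with $\bigoplus_{i=0}^{n-1} F_i[ki]$.

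First I would dispatch the grading hypothesis, which also pins down the homology. Kernels and cokernels in the functor category $\mathcal{A}$ are computed objectwise, so homology is too; thus $H_j(K_\bullet)$ evaluated at a graph $\Gamma$ is simply $H_j \Conf(\Gamma, \mathbb{C}^p)$. By the result of de Longueville and Schultz \cite{dLS01} quoted above, this group is torsion-free, vanishes unless $j$ is a multiple of $k$, and in degree $j = ki$ reproduces, under a regrading that is uniform in $\Gamma$, the group $H_i \Conf(\Gamma, \mathbb{C})$. Because this regrading is compatible with the maps induced by graph injections, it upgrades to an isomorphism of presheaves $H_{ki}(K_\bullet) \cong F_i$. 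This verifies the first hypothesis and simultaneously identifies the target of the eventual quasi-isomorphism.

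The substance of the argument is the projective-dimension bound, and this is where the phrase ``$p$ large relative to $n$'' does its work. The essential point is that the presheaf category $\mathcal{A} = [\mathcal{G}(n)^{op}, \Ab]$ on the \emph{finite} poset $\mathcal{G}(n)$ has finite global dimension, bounded by a quantity $N(n)$ depending only on $n$ — illustratively by the length $\binom{n}{2}$ of the longest chain in $\mathcal{G}(n)$ (up to the global dimension of $\Ab$), since a projective resolution of any presheaf on a finite poset may be built by a standard filtration that terminates after at most as many steps as the longest chain. In particular each $F_i$ has projective dimension at most $N(n)$, a bound that is \emph{entirely independent of $p$}. Consequently, as soon as $k - 1 = 2p - 2 \geq N(n)$, the second hypothesis of Lemma \ref{lem:split} is satisfied.

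With both hypotheses in hand, Lemma \ref{lem:split} yields $K_\bullet \simeq \bigoplus_{j} H_j(K_\bullet)[j] = \bigoplus_{i=0}^{n-1} F_i[ki]$, which is the claim. I expect the projective-dimension estimate to be the only real obstacle: one must make the finiteness of the global dimension of $[\mathcal{G}(n)^{op}, \Ab]$ quantitative and, crucially, $p$-independent, so that the growing gap $k = 2p-1$ between consecutive homological degrees eventually swallows it. The remaining ingredients are imported wholesale — the grading and torsion-freeness from \cite{dLS01}, and the passage from a degreewise splitting to a quasi-isomorphism with the homology from Lemma \ref{lem:split} — so the proof is essentially the assembly of these two facts once the dimension bound is established.
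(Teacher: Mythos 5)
Your proposal is correct and takes essentially the same route as the paper's proof: concentrate the homology in degrees $ki$ and identify $H_{ki}\cong F_i$ via de Longueville--Schultz, bound the projective dimension of $F_i$ by a $p$-independent quantity, and apply Lemma~\ref{lem:split} once $k-1$ exceeds that bound. The only cosmetic difference is the dimension bound itself: the paper uses that $F_i$ is valued in free abelian groups to bound its projective dimension by $\binom{n}{2}$, the dimension of the nerve of $\mathcal{G}(n)^{op}$, whereas you invoke the ($p$-independent) finite global dimension of presheaves on the finite poset $\mathcal{G}(n)$ --- either suffices for the statement as given.
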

\begin{proof}
By the result of de Longueville and Schultz \cite{dLS01}, for every $i$ we have an isomorphism of presheaves $H_{ki} \Conf(-,\mathbb{C}) \cong F_i$.  This presheaf takes values in free abelian groups, so its projective dimension does not exceed the dimension of the nerve of $\mathcal{G}(n)^{op}$, which is $\binom{n}{2}$.  The hypotheses of Lemma \ref{lem:split} are then met by any $p$ for which $\binom{n}{2} \leq k-1$, and this concludes the proof. 
\end{proof}

As in \S\ref{sec:method}, write $\odot$ for the commutative, associative operation $\mathbb{L}(U_n^{op})_!$, and note the compatibility with shifts:
$$
F[a] \odot G[b] \simeq (F \odot G)[a+b],
$$
for all presheaves $F, G$.  We state and prove an orthogonality property of the presheaves $F_i$.

\begin{thm} \label{thm:orthogonal}
For all $i, j \in \{0, \ldots, n-1\}$, we have quasi-isomorphisms
$$
F_i \odot F_j \simeq \begin{cases} F_i[i] &\mbox{if } i=j \\ 
0 & \mbox{if } i \neq j, \end{cases}
$$
where $F_i \colon \mathcal{G}(n)^{op} \to \Ab$ is the homology presheaf $H_i \Conf(-,\mathbb{C})$.
\end{thm}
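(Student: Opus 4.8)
The plan is to compare two descriptions of $C_\bullet^{sing}\Conf(-,\mathbb{C}^p)$ obtained by splitting the exponent as $p = p_1 + p_2$, and to separate the contributions of the various $F_i \odot F_j$ by forcing them into widely-spaced homological degrees, where they can be read off one at a time.

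First I would write $\mathbb{C}^p \cong \mathbb{C}^{p_1}\times\mathbb{C}^{p_2}$ and feed this through Theorem \ref{thm:pointwise}, using Lemma \ref{lem:commutes} to descend to chains, obtaining a quasi-isomorphism $C_\bullet^{sing}\Conf(-,\mathbb{C}^p) \simeq C_\bullet^{sing}\Conf(-,\mathbb{C}^{p_1}) \odot C_\bullet^{sing}\Conf(-,\mathbb{C}^{p_2})$. Taking $p_1, p_2$ both large relative to $n$, Proposition \ref{prop:largep} splits each factor; distributing $\odot$ over the direct sums and using the shift rule $F[a]\odot G[b]\simeq (F\odot G)[a+b]$ then yields
$$
C_\bullet^{sing}\Conf(-,\mathbb{C}^p) \simeq \bigoplus_{i,j=0}^{n-1}(F_i \odot F_j)[\,k_1 i + k_2 j\,], \qquad k_a = 2p_a - 1.
$$
On the other hand, applying Proposition \ref{prop:largep} directly to $\mathbb{C}^p$ gives $C_\bullet^{sing}\Conf(-,\mathbb{C}^p)\simeq \bigoplus_{\ell=0}^{n-1} F_\ell[(k_1 + k_2 + 1)\ell]$, since $2p-1 = k_1 + k_2 + 1$. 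The theorem will follow by matching these two expressions summand by summand.

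The crucial observation is that the complexes $F_i \odot F_j$ are defined purely in terms of $\mathcal{G}(n)$ and $U_n$, hence are independent of $p_1, p_2$; being bounded complexes of presheaves on a finite poset, their homology presheaves $\mathcal{H}_m(F_i \odot F_j)$ vanish outside a fixed range $0 \le m \le M$ with $M$ depending only on $n$. I would then choose $p_1, p_2$ large enough that $k_2 \gg k_1 \gg M, n$, so that the assignment $(i,j,m)\mapsto k_1 i + k_2 j + m$ is injective over the relevant range of indices. Comparing homology in each fixed total degree then isolates a single summand and yields $\mathcal{H}_m(F_i \odot F_j) \cong H_{k_1 i + k_2 j + m}\Conf(-,\mathbb{C}^p)$.

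Finally I would solve the resulting arithmetic constraint: the right-hand side is nonzero, and then equal to $F_\ell$, exactly when $k_1 i + k_2 j + m = (k_1 + k_2 + 1)\ell$ for some $\ell$. Under the bounds $|i-\ell|, |j-\ell| \le n-1$ and $m,\ell \le M$, reducing modulo the two scales $k_2$ and then $k_1$ forces first $j=\ell$, then $i=\ell$, then $m=\ell$. Hence $\mathcal{H}_m(F_i\odot F_j) = 0$ unless $i = j = m$, in which case it equals $F_i$; equivalently $F_i \odot F_i \simeq F_i[i]$ and $F_i \odot F_j \simeq 0$ for $i \neq j$, as claimed. I expect the main obstacle to be the bookkeeping in this step — establishing the uniform degree bound $M$ and arranging the scale separation so that the two decompositions can legitimately be compared one homological degree at a time; once the scales are separated the arithmetic is routine.
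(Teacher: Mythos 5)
Your proposal is correct, but it takes a genuinely different route from the paper's proof. Both arguments rest on Proposition \ref{prop:largep} and on comparing two chain-level decompositions of configuration space in a large affine space, and both must dispose of the ``cross terms'' $F_i \odot F_j$ with $i \neq j$; the mechanisms differ. The paper forms the $p$-fold power $\bigl(\bigoplus_i F_i[ik]\bigr)^{\odot p}$ with $k = 2p-1$, which models $\Conf(-,\mathbb{C}^{p^2})$, expands by the multinomial theorem, and kills cross terms by counting: their multinomial coefficients are at least $p$, chosen larger than the total rank of the groups appearing in the $F_i$, so a nonvanishing cross term would carry more homology than $\bigoplus_i F_i[ik']$, $k' = 2p^2-1$, can accommodate. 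It then extracts $F_i^{\odot p} \simeq F_i[i(p-1)]$ and obtains the binary products by juggling $\odot$-powers. You instead split $\mathbb{C}^p = \mathbb{C}^{p_1} \times \mathbb{C}^{p_2}$ once, but with separated scales $k_2 \gg k_1 \gg M$ (where $M$, depending only on $n$, bounds the homological support of every $F_i \odot F_j$), so each summand $(F_i \odot F_j)[k_1 i + k_2 j]$ sits in its own window of degrees and can be read off directly against $\bigoplus_\ell F_\ell[(k_1 + k_2 + 1)\ell]$; the vanishing for $i \neq j$ is then forced by arithmetic rather than by size of coefficients. Your route is more economical---no multinomial expansion, no rank counting, only binary products---and its one nonformal input, the bound $M$, comes from the finite global dimension of presheaves on the finite poset $\mathcal{G}(n) \times \mathcal{G}(n)$, a finiteness the paper also relies on (explicitly in Proposition \ref{prop:largep}, and tacitly in its own ``degree considerations'' step). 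The paper's route, in exchange, needs only one large parameter and yields the auxiliary formula $F_i^{\odot p} \simeq F_i[i(p-1)]$ along the way. Two points you should spell out in a final write-up: the degree bound $M$ itself (take a projective resolution of $F_i \boxtimes F_j$ of length at most the longest chain in $\mathcal{G}(n) \times \mathcal{G}(n)$ and apply $(U_n^{op})_!$ termwise), and the standard truncation argument showing that a complex whose homology is concentrated in a single degree (or vanishes entirely) is quasi-isomorphic to that homology---this is what converts your degree-by-degree identification of homology presheaves into the asserted quasi-isomorphisms.
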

\begin{proof}
Assume that is $p$ large enough so that the conclusion of Proposition \ref{prop:largep} holds.  Consider the big product
\begin{equation} \label{eq:prod}
\left(\bigoplus_{i =0}^{n-1} F_i[ik] \right)^{\odot p} .
\end{equation}
As explained in \S\ref{sec:method}, Theorem \ref{thm:pointwise} claims this product is a model for $C_\bullet \Conf(-, \mathbb{C}^{p^2})$, and so (\ref{eq:prod}) is quasi-isomorphic to
\begin{equation} \label{eq:little}
\bigoplus_{i =0}^{n-1} F_i[ik']
\end{equation}
again by Proposition \ref{prop:largep}, where $k'=2p^2-1$.  On the other hand, expanding (\ref{eq:prod}) by the multinomial theorem, we obtain
$$
\bigoplus_{\substack{m_0, \ldots, m_{n-1} \\ m_0 + \cdots + m_{n-1} = p}} \frac{p!}{m_0! \cdots m_{n-1}!} \cdot F_0[0]^{\odot m_0} \odot F_1[k]^{\odot m_1} \odot F_2[2k]^{\odot m_2} \odot \cdots \odot F_{n-1}[(n-1)k]^{\odot m_{n-1}}.
$$
Despite the explosion of terms and large multiplicities, the quasi-isomorphism with (\ref{eq:little}) shows that this complex has little homology: only $F_i$ in degree $ik'$ for each $i$.

We consider ``cross-terms,'' by which we mean terms where there are two or more non-vanishing $m_i$.  The multinomial coefficient for a cross-term has size at least $p$.  If we assume that $p$ is larger than the total rank of the abelian groups appearing in the presheaves $F_i$, then we claim that any cross-term must vanish in the derived category.  Indeed, a non-vanishing object in the derived category must contribute to some degree of homology, and any such contribution causes a contradiction after multiplication by the huge coefficient.

By degree considerations, we see that the remaining terms---the non-cross-terms---satisfy
$$
(F_i[ik])^{\odot p} \simeq F_i[ik'],
$$
and so
$$
F_i^{\odot p}[pi(2p-1)] \simeq F_i^{\odot p}[pik] \simeq (F_i[ik])^{\odot p} \simeq F_i[ik'] \simeq F_i[i(2p^2-1)].
$$
Since $i(2p^2-1) - pi(2p-1) = i(p-1)$, we deduce $F_i^{\odot p} \simeq F_i[i(p-1)]$.
Now compute:
\begin{align*}
(F_i \odot F_i)[2i(p-1)] &\simeq F_i[i(p-1)] \odot F_i[i(p-1)] \\
&\simeq F_i^{\odot p} \odot F_i^{\odot p} \\
&\simeq F_i^{\odot 2p} \\
&\simeq F_i[i(2p-1)],
\end{align*}
from which we conclude that $F_i \odot F_i \simeq F_i[i]$, as required.  Similarly, if $i \neq j$,
\begin{align*}
(F_i \odot F_j)[i(p-1) + j(p-1)] &\simeq F_i[i(p-1)] \odot F_j[j(p-1)] \\
&\simeq F_i^{\odot p} \odot F_j^{\odot p} \\
&\simeq (F_i^{\odot (p - 1)} \odot F_j) \odot (F_j^{\odot (p - 1)} \odot F_i) \\
&\simeq 0 \odot 0.
\end{align*}
since both of these factors are cross-terms.
\end{proof}

\begin{cor} \label{cor:decomposition}
Proposition \ref{prop:largep} holds for all $p \geq 1$:
$$
C_\bullet^{sing} \Conf(-,\mathbb{C}^p) \simeq \bigoplus_{i =0}^{n-1} F_i[ki].
$$
\end{cor}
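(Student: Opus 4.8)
The plan is to deduce the corollary directly from the orthogonality relations of Theorem~\ref{thm:orthogonal}, so that the large-$p$ hypothesis of Proposition~\ref{prop:largep}---already spent in establishing those relations---is no longer required. First I would write $\mathbb{C}^p$ as a $p$-fold product and iterate Theorem~\ref{thm:pointwise} together with Lemma~\ref{lem:commutes} to obtain a quasi-isomorphism
$$
C_\bullet^{sing} \Conf(-,\mathbb{C}^p) \simeq \left( C_\bullet^{sing} \Conf(-,\mathbb{C}) \right)^{\odot p}.
$$
Using the formality quasi-isomorphism $C_\bullet^{sing} \Conf(-,\mathbb{C}) \simeq \bigoplus_{i=0}^{n-1} F_i[i]$, and the fact that $\odot$---being a left derived functor---respects quasi-isomorphisms and distributes over direct sums, I would replace the right-hand side by $\left( \bigoplus_{i=0}^{n-1} F_i[i] \right)^{\odot p}$.

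Next I would expand this $\odot$-power by the multinomial theorem, exactly as in the proof of Theorem~\ref{thm:orthogonal}, obtaining a direct sum indexed by compositions $m_0 + \cdots + m_{n-1} = p$, the $i$-th factor contributing $(F_i[i])^{\odot m_i}$. The decisive simplification is that every cross-term---any term in which two distinct indices $i \neq j$ occur with positive multiplicity---contains a factor $F_i \odot F_j$, which is quasi-isomorphic to zero by Theorem~\ref{thm:orthogonal}; since $\odot$ is associative and commutative up to quasi-isomorphism and $0 \odot (-) \simeq 0$, the entire cross-term vanishes. Only the $n$ pure terms survive, each with multinomial coefficient $1$, leaving
$$
C_\bullet^{sing} \Conf(-,\mathbb{C}^p) \simeq \bigoplus_{i=0}^{n-1} (F_i[i])^{\odot p}.
$$

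Finally I would evaluate each surviving term using the shift-compatibility $F[a] \odot G[b] \simeq (F \odot G)[a+b]$ together with the diagonal relation $F_i \odot F_i \simeq F_i[i]$ of Theorem~\ref{thm:orthogonal}. A short induction on $m$ gives $F_i^{\odot m} \simeq F_i[(m-1)i]$, whence
$$
(F_i[i])^{\odot p} \simeq F_i^{\odot p}[pi] \simeq F_i[(p-1)i + pi] = F_i[(2p-1)i] = F_i[ki],
$$
since $k = 2p-1$. Assembling these identifications yields the claimed decomposition for every $p \geq 1$. I do not anticipate a genuine obstacle: all of the real difficulty has been front-loaded into Theorem~\ref{thm:orthogonal}, whose proof already performed the bootstrap from large $p$. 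The only points demanding care are checking that the multinomial expansion is legitimate in the derived category---guaranteed by the distributivity, associativity, and commutativity of $\odot$ up to quasi-isomorphism---and keeping the homological shifts bookkept correctly through the inductive step.
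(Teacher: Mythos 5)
Your proposal is correct and takes essentially the same route as the paper: the paper's own (very terse) proof likewise deduces the corollary from the $p=1$ case due to Brieskorn, iteration of Theorem~\ref{thm:pointwise} over $\mathbb{C}^p = \mathbb{C} \times \cdots \times \mathbb{C}$, and the orthogonality relations of Theorem~\ref{thm:orthogonal}. Your write-up simply makes explicit the multinomial expansion, the vanishing of cross-terms via $F_i \odot F_j \simeq 0$, and the shift bookkeeping $F_i^{\odot m} \simeq F_i[(m-1)i]$ that the paper leaves to the reader.
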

\begin{proof}
We already observed the fact for $p=1$ from work of Brieskorn \cite{Brieskorn73}.  The result then follows from Theorem \ref{thm:pointwise} and Theorem \ref{thm:orthogonal} using $\mathbb{C}^p = \mathbb{C} \times \cdots \times \mathbb{C}$.
\end{proof}

\begin{thm} \label{thm:stableformula}
If $X$ is a Hausdorff space, and 
$$
M_\bullet \colon \mathcal{G}(n)^{op} \to \Ch_\bullet(\mathbb{Z})
$$
is a chain model for $\Conf(-,X)$, then
$$
H_{mp+b} \Conf(-, X \times \mathbb{C}^p) = \bigoplus_{i=0}^{n-1} H_{(m-2i)p+(b+i)} \left( M_\bullet \odot F_i \right).
$$
If $X$ is homeomorphic to a complement $A \setminus B$ for $A$ a $d$-dimensional simplicial complex and $B$ a subcomplex, then
$$
H_{mp+b} \Conf(-, X \times \mathbb{C}^p) \cong \begin{cases} 0 &\mbox{if $m$ is odd} \\ 
H_{m/2+b} \left( M_\bullet \odot F_{m/2} \right) & \mbox{if $m$ is even} \end{cases}
$$
once $2p > \max \left\{ b+m/2+1, \binom{n}{2} + nd -b -m/2 -1 \right\}$.
\end{thm}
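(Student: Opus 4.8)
The plan is to read off the first displayed formula purely formally from the machinery already in place, and then to obtain the second by pinning down the homological support of each summand $M_\bullet \odot F_i$ and letting the hypothesis on $p$ push every index but one out of range. For the first formula I would start from the chain-level form of Theorem \ref{thm:pointwise} provided by Lemma \ref{lem:commutes}, which identifies $C_\bullet^{sing}\Conf(-, X \times \mathbb{C}^p)$ with $M_\bullet \odot C_\bullet^{sing}\Conf(-,\mathbb{C}^p)$. Feeding in Corollary \ref{cor:decomposition}, namely $C_\bullet^{sing}\Conf(-,\mathbb{C}^p) \simeq \bigoplus_{i=0}^{n-1} F_i[ki]$ with $k = 2p-1$, and using that $\odot$ distributes over direct sums and is compatible with shifts, I obtain
$$
C_\bullet^{sing}\Conf(-, X \times \mathbb{C}^p) \simeq \bigoplus_{i=0}^{n-1} (M_\bullet \odot F_i)[ki].
$$
Taking $H_{mp+b}$ and simplifying the degree $mp + b - ki = (m-2i)p + (b+i)$ then yields the first formula. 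This step is routine.

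For the second formula the idea is that each $M_\bullet \odot F_i$ has homology concentrated in a bounded window, while the evaluation degree $(m-2i)p + (b+i)$ marches to $\pm\infty$ with $p$ unless $m - 2i = 0$. I would establish two endpoints for the window. At the low end, $M_\bullet \odot F_i$ is connective: both $M_\bullet$ and $F_i$ are connective, and $\mathbb{L}(U_n^{op})_!$ is the left derived functor of a right-exact (colimit-type) functor, so it preserves connectivity; hence $H_q(M_\bullet \odot F_i) = 0$ for $q < 0$. At the high end, the inclusion $X \hookrightarrow A$ into a $d$-dimensional complex forces $H_q\Conf(-,X) = 0$ for $q > nd$, so $M_\bullet$ may be replaced by a complex with homology in $[0,nd]$; since $F_i$ is pointwise free and has projective dimension at most $\binom{n}{2}$ (the dimension of the nerve of $\mathcal{G}(n)$), the outer tensor product $M_\bullet \boxtimes F_i$ admits a cofibrant model in degrees $[0, nd + \binom{n}{2}]$, and $(U_n^{op})_!$ sends representables to representables, so is exact there. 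This bounds the top of the support of $M_\bullet \odot F_i$ by essentially $\binom{n}{2} + nd$.

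With both endpoints in hand I would run the bookkeeping against the two thresholds. For indices $i > m/2$ the degree $(m-2i)p + (b+i)$ goes negative once $2p$ exceeds the first quantity $b + m/2 + 1$ (the binding case being $i = m/2 + 1$), so connectivity annihilates those summands. For indices $i < m/2$ the degree exceeds the top of the support once $2p$ exceeds the second quantity $\binom{n}{2} + nd - b - m/2 - 1$ (the binding case being $i = m/2 - 1$), so those summands vanish too. When $m$ is even, only $i = m/2$ has $m - 2i = 0$; its degree is the constant $b + m/2$, leaving the single surviving term $H_{m/2 + b}(M_\bullet \odot F_{m/2})$. When $m$ is odd no index satisfies $m - 2i = 0$, so every summand is forced out of range and the homology vanishes.

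The main obstacle is the high-end estimate together with the attendant off-by-one accounting: one must make the support window sharp enough that the stated inequality on $p$, rather than a slightly stronger one, already suffices. This demands care with the exact amplitude contributed by $\mathbb{L}(U_n^{op})_!$, which is governed by the nerves of the fibers $\mathcal{P}(\Gamma)$ rather than by a crude global-dimension bound, and with the question of whether the extreme degrees $nd$ and $\binom{n}{2}$ can be realized simultaneously. The odd-$m$ case in particular should be checked against both thresholds at the neighboring indices $(m\pm 1)/2$, to confirm that the same pair of inequalities drives every summand to zero.
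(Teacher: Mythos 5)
Your strategy coincides with the paper's: the first formula is obtained exactly as you describe (Lemma \ref{lem:commutes} plus Theorem \ref{thm:pointwise}, then Corollary \ref{cor:decomposition} and distributivity of $\odot$ over sums and shifts), and the second is obtained by trapping the homology of each $M_\bullet \odot F_i$ in a window $\left[0, \binom{n}{2}+nd\right]$ and letting the degrees $(m-2i)p+b+i$ exit that window for $i \neq m/2$. The gap is in your justification of the top of the window. From ``$M_\bullet$ has homology concentrated in $[0,nd]$'' you cannot conclude that $M_\bullet \boxtimes F_i$ admits a cofibrant model in degrees $[0, nd+\binom{n}{2}]$: external tensor products add projective dimensions, so resolving a truncation of $M_\bullet$ and then $F_i$ only bounds the degrees by roughly $nd+2\binom{n}{2}$, and a homological bound on $M_\bullet$ alone bounds nothing at the chain level. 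The paper's proof instead constructs a genuine chain-level model of $\Conf(-,X)$ concentrated in degrees $\leq nd$, via Lemma \ref{lem:cofibrant} applied to the open stars of the $B$-avoiding faces of $A^n$, and only then tensors with a length-$\binom{n}{2}$ projective resolution of $F_i$. Alternatively, the fix you gesture at parenthetically is correct and self-contained: by Proposition \ref{prop:initiala}, $\mathbb{L}(U_n^{op})_!$ is computed by homotopy colimits over the posets $\mathcal{P}(\Gamma)^{op}$, whose nerves have dimension at most $\binom{n}{2}$, so $L_q(U_n^{op})_!$ vanishes for $q>\binom{n}{2}$; combined with the K\"unneth isomorphism $H_r(M_\bullet \boxtimes F_i) \cong H_r(M_\bullet)\boxtimes F_i$ (valid since $F_i$ is pointwise free), the hyperhomology spectral sequence yields the window. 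One of these two repairs must be made explicit; as written, the step fails.

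Your closing worry about odd $m$ is not mere prudence---it is where the bookkeeping (yours and the paper's, both of which take the binding cases to be $i = m/2\pm1$) actually breaks. For odd $m$ the neighboring indices are $i=(m\pm1)/2$, whose degrees $\mp p + b + (m\pm1)/2$ move at speed one in $p$, not two, so the inequality $2p > b+m/2+1$ does not push the $i=(m+1)/2$ summand below degree zero. Concretely, take $n=2$, $X=S^1$ (so $d=1$ and $\binom{n}{2}+nd=3$), $m=1$, $b=10$, $p=9$: the threshold $2p>11.5$ holds, yet $\Conf(2, S^1\times\mathbb{C}^9)\cong (S^1\times\mathbb{C}^9)\times\left(S^1\times\mathbb{C}^9\setminus\{\mathrm{pt}\}\right)$ has $H_{19}\cong\mathbb{Z}\neq 0$; in the decomposition this is exactly the surviving term $H_2(M_\bullet\odot F_1)$, sitting inside the window. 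So carrying out the check you propose does not confirm the stated threshold---it shows that for odd $m$ it must be replaced by a condition linear in $p$, such as $p > \max\left\{b+(m+1)/2,\ \binom{n}{2}+nd-b-(m-1)/2\right\}$ (and even for even $m$ the second bound should read $2p > \binom{n}{2}+nd-b-m/2+1$, since the degree at $i=m/2-1$ is $2p+b+m/2-1$). Any complete write-up should carry out this accounting rather than defer it.
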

\begin{proof}
By Theorem \ref{thm:pointwise} and Lemma \ref{lem:commutes}, we have a quasi-isomorphism
$$
C_\bullet \Conf(-,X \times \mathbb{C}^p) \simeq (C_\bullet \Conf(-,X)) \odot (C_\bullet \Conf(-,\mathbb{C}^p)).
$$
Substituting the model for $X$ and the model for $\mathbb{C}^p$ in Corollary \ref{cor:decomposition}, we obtain
$$
C_\bullet \Conf(-,X \times \mathbb{C}^p) \simeq M_\bullet \odot \left( \bigoplus_{i =0}^{n-1} F_i[(2p-1)i] \right).
$$
For any $p$,
\begin{align*}
H_{mp+b} \Conf(-,X \times \mathbb{C}^p) &\cong H_{mp+b} \left( M_\bullet \odot \left( \bigoplus_{i =0}^{n-1} F_i[(2p-1)i] \right) \right) \\
&\cong \bigoplus_{i =0}^{n-1} H_{mp+b} \left(  M_\bullet \odot F_i\right)[(2p-1)i]  \\
&\cong \bigoplus_{i =0}^{n-1} H_{(mp+b)-(2p-1)i} \left(  M_\bullet \odot F_i\right),
\end{align*}
from which the first claim follows, since $(mp+b)-(2p-1)i = p(m-2i) +(b+i)$.

We turn our attention to the second claim.  The open stars of the $B$-avoiding faces of the product complex $A^n$ form an open cover that satisfies the hypotheses of Lemma~\ref{lem:cofibrant}, proving that $\Conf(-,X)$ has a cofibrant chain model concentrated in degrees up to $\dim A^n = nd$.  Since the projective dimension of $F_i$ is at most $\binom{n}{2}$, we may assume that the complex $M_\bullet \odot F_i$ is concentrated in degrees up to $\binom{n}{2} + nd$.

All non-vanishing homology must appear in the intersection
$$
\{p(m-2i)+b+i \, | \, 0 \leq i \leq n-1 \} \; \cap \; \left\{0, \ldots, \binom{n}{2} + nd\right\}.
$$
As $p$ gets larger, this first set spreads out, until its only intersection with the second set happens when $m=2i$.  Specifically, this occurs when the adjacent cases, $i = m/2 \pm 1$, leave the interval $\left[0,  \binom{n}{2} + nd\right]$:
$$
\begin{array}{ccc}
-2p+b+m/2+1 < 0 & \mbox{ and } & \binom{n}{2} + nd < 2p + b + m/2 + 1,
\end{array}
$$
which happens once $2p > \max \left\{ b+m/2+1, \binom{n}{2} + nd -b -m/2 -1 \right\}$.
Under this assumption, the only summand that contributes homology has $i=m/2$, giving the result.
\end{proof}

\section{Proof of Theorems \ref{thm:pointwise} and \ref{thm:main}}
We focus on proving the harder result, Theorem \ref{thm:main}.  We deduce Theorem \ref{thm:pointwise} in \S\ref{sec:pointwise_proof}.
\label{sec:main_proof}
\subsection{Organization of the proof}
Write $\Pi \colon \GI_2^{op} \to \Top_{G \times H}$ for the functor called $\Conf(-, X) \times \Conf(-,Y)$ in the introduction.  We build the weak equivalence from Theorem~\ref{thm:main} in two steps
$$
\mathbb{L}(U^{op})_! \Pi \overset{\alpha}{\longrightarrow} (U^{op})_! \Pi \overset{\beta}{\longrightarrow} \Conf(-, X\times Y).
$$
The natural transformation $\alpha$ is the comparison map from the homotopy left Kan extension to the usual left Kan extension.  We show that every component $\alpha_\Gamma$ is a weak equivalence using a theorem of Dugger-Isaksen about the homotopy colimits of open covers.

We define the natural transformation $\beta$ in terms of its components $\beta_\Gamma$ for every $\Gamma \in \GI$.  We check that the required squares commute, and that every $\beta_\Gamma$ is a homeomorphism.


\subsection{Evaluation of $\mathbb{L} (U^{op})_! \Pi$ and $(U^{op})_! \Pi$ at $\Gamma$}
We employ a general formula for left Kan extensions in terms of colimits.  
In the case of $(U^{op})_! \Pi$, this formula reads
$$
[(U^{op})_! \Pi](\Gamma) = \underset{U^{op}/\Gamma}{\colim} \;\; \left[(U^{op}/\Gamma) \longrightarrow \GI_2^{op} \overset{\Pi}{\longrightarrow} \Top_{G \times H} \right].
$$
The case of $\mathbb{L} (U^{op})_! \Pi$ is identical, but with $\hocolim$ in place of $\colim$.

We have used standard notation for the comma category $(U^{op}/\Gamma)$, but we anyhow give the details of its construction.  For concreteness of exposition we describe the opposite category $(U^{op}/\Gamma)^{op} = (\Gamma/U)$.

An object of $\Gamma/U$ is a pair $((\Gamma', \Gamma'') \, ; \, \Gamma \overset{\psi}{\to} U(\Gamma', \Gamma''))$ of an object $(\Gamma', \Gamma'') \in \GI_2$ and a graph injection $\psi$ from the fixed graph $\Gamma$ to the union of $\Gamma' $ and $\Gamma''$.  A morphism in $\Gamma/U$ is a $\GI_2$ morphism making the evident triangle commute.  Recall that the poset $\mathcal{P}(\Gamma)$ is defined as the full subposet of $\mathcal{G}(\Gamma) \times \mathcal{G}(\Gamma)$ on the objects $(\Gamma', \Gamma'')$ with $U(\Gamma', \Gamma'') = \Gamma$.  The comma category $\Gamma/U$ admits a functor from the poset $\mathcal{P}(\Gamma)$
$$
\begin{array}{rcccl}
\Phi & \colon & \mathcal{P}(\Gamma) &\to& (\Gamma/U) \\
&& (\Gamma', \Gamma'') &\mapsto& ((\Gamma', \Gamma'') \, ; \, \Gamma \overset{\tiny\mbox{id}}{\longrightarrow} U(\Gamma', \Gamma'')),
\end{array}
$$
whose image includes all objects of $\Gamma/U$ for which the graph injection $\psi$ is the identity.
To prove the pointwise result, Theorem \ref{thm:pointwise}, we must factor this functor through the smaller category $(\Gamma/U_n)$
$$
\Phi \colon \mathcal{P}(\Gamma) \xrightarrow{\Phi_a} (\Gamma/U_n) \to (\Gamma/U).
$$
We prove a result about $\Phi_a$, and then prove the same result about $\Phi$.  The proofs are similar, so Proposition \ref{prop:initiala} can be thought of as a warm-up for Proposition \ref{prop:initial}.
\begin{prop} \label{prop:initiala}
The functor $\Phi_a$ is homotopy initial.
\end{prop}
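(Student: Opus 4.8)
The plan is to invoke the homotopy-theoretic form of Quillen's Theorem A: the functor $\Phi_a$ is homotopy initial precisely when, for every object $c \in (\Gamma/U_n)$, the comma category $\Phi_a \downarrow c$ has weakly contractible nerve. So the whole problem reduces to understanding these comma categories and exhibiting a contraction of each.

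First I would make the comma category completely explicit. Fix an object $c = ((\Gamma', \Gamma''); \psi) \in (\Gamma/U_n)$. In the pointwise setting every graph carries the vertex set $\{1, \ldots, n\}$ and each graph injection is literally an edge-inclusion, so $\psi$ merely records the containment $\Gamma \subseteq \Gamma' \cup \Gamma''$. Since $\mathcal{P}(\Gamma)$ and $(\Gamma/U_n)$ are posets, $\Phi_a \downarrow c$ is again a poset, and unwinding definitions identifies its objects with pairs $(\Delta', \Delta'')$ of edge-subgraphs of $\Gamma$ that satisfy $\Delta' \cup \Delta'' = \Gamma$ (membership in $\mathcal{P}(\Gamma)$) together with $\Delta' \subseteq \Gamma'$ and $\Delta'' \subseteq \Gamma''$ (existence of the structure morphism to $c$); combined with $\Delta', \Delta'' \subseteq \Gamma$ these read $\Delta' \subseteq \Gamma \cap \Gamma'$ and $\Delta'' \subseteq \Gamma \cap \Gamma''$, ordered by coordinatewise inclusion.

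The key step is then to observe that this poset has a maximum (hence terminal) object, namely $(\Gamma \cap \Gamma', \, \Gamma \cap \Gamma'')$. To check it is an object I would use $\psi$: on a fixed vertex set $\Gamma \subseteq \Gamma' \cup \Gamma''$ yields $(\Gamma \cap \Gamma') \cup (\Gamma \cap \Gamma'') = \Gamma \cap (\Gamma' \cup \Gamma'') = \Gamma$, so the union condition holds, and the two containment conditions are immediate. Any other object $(\Delta', \Delta'')$ satisfies $\Delta' \subseteq \Gamma \cap \Gamma'$ and $\Delta'' \subseteq \Gamma \cap \Gamma''$ by definition, hence admits a unique morphism to this pair. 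A poset with a maximum has contractible nerve, so $\Phi_a \downarrow c$ is contractible for every $c$, and the proposition follows.

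I do not expect a serious obstacle here; once the comma category is identified the argument is bookkeeping, and the fixed vertex set is exactly what permits the honest edge-set intersections $\Gamma \cap \Gamma'$. The single point requiring care is the direction of the comma category and the convention distinguishing ``homotopy initial'' from ``homotopy final,'' since the colimit defining the Kan extension lives over the opposite category $U_n^{op}/\Gamma$; I would confirm that contractibility of $\Phi_a \downarrow c$ (rather than $c \downarrow \Phi_a$) is the correct criterion, equivalently that $\Phi_a^{op}$ is the cofinal functor that preserves the relevant $\hocolim$. The genuine difficulty is deferred to Proposition \ref{prop:initial}, where graphs no longer share a common vertex set, honest injections appear, and the naive intersection $\Gamma \cap \Gamma'$ must be replaced by a suitable pullback.
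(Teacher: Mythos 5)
Your proof is correct and is essentially the paper's own argument: both exhibit the pair $(\Gamma \cap \Gamma',\, \Gamma \cap \Gamma'')$ as a terminal (maximum) object of each comma category $(\Phi_a/\zeta)$, using distributivity $(\Gamma \cap \Gamma') \cup (\Gamma \cap \Gamma'') = \Gamma \cap (\Gamma' \cup \Gamma'') = \Gamma$ and the fact that these comma categories are posets, so morphisms to the candidate terminal object are automatically unique. The convention worry you flag also resolves in your favor, since the paper likewise takes homotopy initiality of $\Phi_a$ to mean contractibility of the slices $(\Phi_a/\zeta)$ and then passes to opposites in Corollary~\ref{cor:hocolim_restrict}.
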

\begin{proof}
For every $\zeta \in (\Gamma/U_n)$, we must show that the category $(\Phi_a/\zeta)$ is contractible.  We do this by producing a terminal object.  Specifically, if
$$
\zeta = ((Z', Z'') \in \mathcal{G}(n) \times \mathcal{G}(n) \, ; \, \Gamma \subseteq U_n(Z', Z'')),
$$
then the terminal object of $(\Phi_a/\zeta)$ is given by
$$
t = ((\Gamma \cap Z', \Gamma \cap Z'') \in \mathcal{P}(\Gamma) \,;\; \Phi_a(\Gamma \cap Z', \Gamma \cap Z'') \overset{\tau}{\to} \zeta)
$$
where $\tau$ is the $(\Gamma/U_n)$ morphism induced by the pair of graph inclusions 
$$
(\tau', \tau'') \colon (\Gamma \cap Z', \Gamma \cap Z'') \subseteq (Z', Z'').
$$
Note that $U_n(\Gamma \cap Z', \Gamma \cap Z'') = \Gamma \cap U_n(Z', Z'') = \Gamma$ by distributivity, so we do have $(\Gamma \cap Z', \Gamma \cap Z'') \in \mathcal{P}(\Gamma)$, and the triangle of inclusions
$$
\begin{tikzcd}
& U_n(\Gamma \cap Z', \Gamma \cap Z'') \ar{dd}{U_n(\tau', \tau'')}\\
\Gamma \ar{ru}{\tiny\mbox{id}} \arrow[swap]{rd}{\subseteq} &  \\
 & U_n(Z', Z''),
\end{tikzcd}
$$
commutes vacuously in the poset $\mathcal{G}(n)$, so the pair $(\tau', \tau'')$ produces a valid morphism of $(\Gamma/U_n)$.

Observe that each hom-set of $(\Phi_a/\zeta)$ has size at most one, since $\mathcal{P}(\Gamma)$ is a poset.  Consequently, $t \in (\Phi_a/\zeta)$ is terminal if it admits a map from every other $s \in (\Phi_a/\zeta)$, since this map will be unique automatically.  Suppose
$$
s = ((\Gamma', \Gamma'') \in \mathcal{P}(\Gamma) \,;\; \Phi_a(\Gamma', \Gamma'') \overset{\sigma}{\to} \zeta)
$$
is some other object of $(\Phi_a/\zeta)$, where the morphism $\sigma$ is induced by two graph inclusions
$$
(\sigma', \sigma'') \colon (\Gamma', \Gamma'') \subseteq (Z', Z'').
$$
From this we see that $\Gamma' \subseteq Z'$ and $\Gamma'' \subseteq Z''$.  On the other hand, $U_n(\Gamma', \Gamma'') = \Gamma$, so $\Gamma' \subseteq \Gamma$ and $\Gamma'' \subseteq \Gamma$.  It follows that there are inclusions
$$
(\varepsilon', \varepsilon'') \colon (\Gamma', \Gamma'') \subseteq (\Gamma \cap Z', \Gamma \cap Z'').
$$
Once again, the commutation of the resulting triangle is vacuous, and so the pair $(\varepsilon', \varepsilon'')$ induces a $(\Phi_a/\zeta)$ morphism to $t$.
\end{proof}
\begin{prop} \label{prop:initial}
The functor $\Phi$ is homotopy initial.
\end{prop}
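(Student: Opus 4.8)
The plan is to mirror the proof of Proposition~\ref{prop:initiala}: for every object $\zeta \in (\Gamma/U)$ I will show that the comma category $(\Phi/\zeta)$ is contractible by exhibiting a terminal object. Write
$$
\zeta = \bigl((Z', Z'') \in \GI_2 \,;\, \Gamma \overset{\psi}{\to} U(Z', Z'')\bigr),
$$
where $Z' = (W, \sim')$ and $Z'' = (W, \sim'')$ share a common vertex set $W$ and $\psi$ is a graph injection. The essential new feature, compared with the $\Phi_a$ case, is that $\psi$ need no longer be the inclusion of a fixed vertex set $\{1,\ldots,n\}$; it is an honest graph injection between possibly different vertex sets. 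So in place of the intersections $\Gamma \cap Z'$ and $\Gamma \cap Z''$ I will use the \emph{pullback} of the edge relations along $\psi$.

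Concretely, writing $V$ for the vertex set of $\Gamma$, I define edge-subgraphs $\Gamma' = (V, \sim_{\Gamma'})$ and $\Gamma'' = (V, \sim_{\Gamma''})$ of $\Gamma$ by
$$
u \sim_{\Gamma'} v \iff \bigl(u \sim_\Gamma v \text{ and } \psi(u) \sim' \psi(v)\bigr), \qquad u \sim_{\Gamma''} v \iff \bigl(u \sim_\Gamma v \text{ and } \psi(u) \sim'' \psi(v)\bigr).
$$
The first step is to check $(\Gamma', \Gamma'') \in \mathcal{P}(\Gamma)$. That $\Gamma', \Gamma''$ are edge-subgraphs is immediate, and $U(\Gamma', \Gamma'') \subseteq \Gamma$ is clear; the reverse inclusion is the one place the hypotheses are used: if $u \sim_\Gamma v$ then $\psi(u) \sim_{U(Z',Z'')} \psi(v)$ because $\psi$ is a graph injection into the union, whence $\psi(u) \sim' \psi(v)$ or $\psi(u) \sim'' \psi(v)$, placing the edge in $\Gamma'$ or $\Gamma''$. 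Next I note that $\psi$ itself defines a $\GI_2$-morphism $(\psi, \psi) \colon (\Gamma', \Gamma'') \to (Z', Z'')$ --- it preserves each relation by construction --- and this supplies a structure map $\tau \colon \Phi(\Gamma', \Gamma'') \to \zeta$ whose triangle commutes since its underlying vertex map is $\psi$. Thus $t = ((\Gamma', \Gamma'') \,;\, \tau)$ is a genuine object of $(\Phi/\zeta)$.

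It remains to prove $t$ is terminal. As in Proposition~\ref{prop:initiala}, every hom-set of $(\Phi/\zeta)$ has at most one element, since an underlying morphism lives in the poset $\mathcal{P}(\Gamma)$; hence it suffices to produce a map to $t$ from an arbitrary $s = ((\Gamma_1', \Gamma_1'') \,;\, \sigma)$. The commuting triangle defining $\sigma$ forces the underlying vertex map of the $\GI_2$-morphism $(\sigma', \sigma'') \colon (\Gamma_1', \Gamma_1'') \to (Z', Z'')$ to be exactly $\psi$. Since $\sigma'$ preserves edges and $\Gamma_1'$ is an edge-subgraph of $\Gamma$, any edge $u \sim_{\Gamma_1'} v$ satisfies both $u \sim_\Gamma v$ and $\psi(u) \sim' \psi(v)$, i.e. $u \sim_{\Gamma'} v$; hence $\Gamma_1' \subseteq \Gamma'$, and likewise $\Gamma_1'' \subseteq \Gamma''$. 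This pair of inclusions is a $\mathcal{P}(\Gamma)$-morphism $s \to t$, and the accompanying triangle commutes automatically because a morphism of $\Gamma/U$ is determined by its underlying vertex map, here $\psi$ on both sides. Therefore $t$ is terminal, $(\Phi/\zeta)$ is contractible, and $\Phi$ is homotopy initial. The only real obstacle is the bookkeeping for general injections $\psi$: once one replaces ``intersect with $Z'$'' by ``pull back along $\psi$'' and checks that the graph-injection hypothesis forces $U(\Gamma', \Gamma'') = \Gamma$, the argument is structurally identical to the warm-up.
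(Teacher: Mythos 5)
Your proof is correct and takes essentially the same approach as the paper: the paper relabels the shared vertex set of $(Z', Z'')$ so that $\psi$ becomes an inclusion and then takes the terminal object to be $(\Gamma \cap Z', \Gamma \cap Z'')$, which is exactly your pullback of the edge relations along $\psi$ --- indeed the paper remarks that the relabeling is done only so that the symbol $\cap$ can stand for this fiber product. Your verifications (that the pair lies in $\mathcal{P}(\Gamma)$, that hom-sets of $(\Phi/\zeta)$ are singletons or empty because $\mathcal{P}(\Gamma)$ is a poset, and that every object maps to $t$) match the paper's proof step for step.
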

\begin{proof}
We imitate the proof of Proposition \ref{prop:initiala}, showing that for every $\zeta \in (\Gamma/U)$, the category $(\Phi/\zeta)$ has a terminal object.  Suppose
$$
\zeta = ((Z', Z'') \in \GI_2 \, ; \, \Gamma \xrightarrow{\psi} U(Z', Z'')),
$$
and, relabeling the (shared) vertex set of the graphs $Z', Z''$ if necessary, assume that the underlying injection of $\psi$
$$
\psi \colon \{1, \ldots, n\} \to U(Z',Z'')
$$
is an inclusion.  (Relabeling has the effect of replacing $\zeta$ with an isomorphic object, which evidently preserves the homotopy type of the slice category $(\Phi/\zeta)$.  We include this assumption only so that we can keep using the symbol $\cap$ for fiber products.)  We show that the terminal object of $(\Phi/\zeta)$ is given by
$$
t = ((\Gamma \cap Z', \Gamma \cap Z'') \in \mathcal{P}(\Gamma) \,;\; \Phi(\Gamma \cap Z', \Gamma \cap Z'') \overset{\tau}{\to} \zeta)
$$
where $\tau$ is the $(\Gamma/U)$ morphism induced by the pair of graph inclusions 
$$
(\tau', \tau'') \colon (\Gamma \cap Z', \Gamma \cap Z'') \subseteq (Z', Z'').
$$
Again, $U(\Gamma \cap Z', \Gamma \cap Z'') = \Gamma \cap U(Z', Z'') = \Gamma$, so $(\Gamma \cap Z', \Gamma \cap Z'') \in \mathcal{P}(\Gamma)$.  The triangle 
$$
\begin{tikzcd}
& U(\Gamma \cap Z', \Gamma \cap Z'') \ar{dd}{U(\tau', \tau'')}\\
\Gamma \ar{ru}{\tiny\mbox{id}} \arrow[swap]{rd}{\psi} &  \\
 & U(Z', Z''),
\end{tikzcd}
$$
commutes because all three maps are inclusions.

Observe that each hom-set of $(\Phi/\zeta)$ has size at most one, since $\mathcal{P}(\Gamma)$ is a poset.  Consequently, $t \in (\Phi/\zeta)$ is terminal if it admits a map from every other $s \in (\Phi/\zeta)$.  Suppose
$$
s = ((\Gamma', \Gamma'') \in \mathcal{P}(\Gamma) \,;\; \Phi(\Gamma', \Gamma'') \overset{\sigma}{\to} \zeta)
$$
is some other object of $(\Phi/\zeta)$, where the morphism $\sigma$ is induced by a pair of graph injections
$$
(\sigma', \sigma'') \colon (\Gamma', \Gamma'') \hookrightarrow (Z', Z'')
$$
with the same underlying function---also called $\sigma$---on vertices.  Since $\sigma$ is a morphism in $(\Gamma/U)$, we have a commuting triangle
$$
\begin{tikzcd}
& U(\Gamma', \Gamma'') \ar{dd}{U(\sigma', \sigma'')}\\
\Gamma \ar{ru}{\tiny\mbox{id}} \arrow[swap]{rd}{\psi} &  \\
 & U(Z', Z''),
\end{tikzcd}
$$
which forces $U(\sigma', \sigma'')$, whose underlying function is $\sigma$, to match the standard inclusion $\psi \colon \{1, \ldots, n\} \subseteq U(Z', Z'')$.  From this we see that $\Gamma' \subseteq Z'$ and $\Gamma'' \subseteq Z''$.  Moreover, since $\Gamma', \Gamma'' \subseteq \Gamma$, there are inclusions
$$
(\varepsilon', \varepsilon'') \colon (\Gamma', \Gamma'') \subseteq (\Gamma \cap Z', \Gamma \cap Z''),
$$
and they give a $(\Phi/\zeta)$ map because the required triangle consists of inclusions.
\end{proof}

\begin{cor} \label{cor:hocolim_restrict}
Homotopy colimits of shape $(U^{op}/\Gamma)$ may be evaluated by first restricting to the poset $\mathcal{P}(\Gamma)^{op}$.  Explicitly, the natural map
$$
\underset{\mathcal{P}(\Gamma)^{op}}{\hocolim} \;\; (\Phi^{op})\hspace{-1pt}^*F \overset{\sim}{\longrightarrow}
\underset{U^{op}/\Gamma}{\hocolim} \;\; F
$$
is a weak equivalence for any functor $F \colon (U^{op}/\Gamma) \to \Top_{G \times H}$.
\end{cor}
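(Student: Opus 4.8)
The plan is to deduce this corollary formally from Proposition \ref{prop:initial} by passing to opposite categories and invoking the standard cofinality theorem for homotopy colimits. Proposition \ref{prop:initial} asserts that $\Phi$ is homotopy \emph{initial}, a condition phrased through the slices $(\Phi/\zeta)$; the cofinality theorem for homotopy colimits instead requires a homotopy \emph{final} functor, whose defining slices point the other way. Since the corollary concerns the opposite functor $\Phi^{op}$, the first task is simply to check that homotopy initiality of $\Phi$ is the same as homotopy finality of $\Phi^{op}$.

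To do this I would unwind the variance carefully. By construction $(U^{op}/\Gamma) = (\Gamma/U)^{op}$, and taking opposites of $\Phi \colon \mathcal{P}(\Gamma) \to (\Gamma/U)$ produces exactly $\Phi^{op} \colon \mathcal{P}(\Gamma)^{op} \to (U^{op}/\Gamma)$, the functor appearing in the statement. For any functor $u \colon \mathcal{C} \to \mathcal{D}$ and any object $d \in \mathcal{D}$ there is a canonical isomorphism of comma categories $(d/u^{op}) \cong (u/d)^{op}$; applied to $u = \Phi$ this reads $(\zeta/\Phi^{op}) \cong (\Phi/\zeta)^{op}$ for every $\zeta \in (U^{op}/\Gamma)$. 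Proposition \ref{prop:initial} supplies a terminal object in each $(\Phi/\zeta)$, hence an initial object in the opposite $(\Phi/\zeta)^{op} \cong (\zeta/\Phi^{op})$, and a category with an initial object is contractible. Therefore every slice $(\zeta/\Phi^{op})$ is contractible, which is precisely the assertion that $\Phi^{op}$ is homotopy final.

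With homotopy finality of $\Phi^{op}$ established, I would invoke the cofinality theorem for homotopy colimits: restriction of a diagram along a homotopy final functor induces a weak equivalence on homotopy colimits. For the diagram $F \colon (U^{op}/\Gamma) \to \Top_{G \times H}$ this yields exactly the claimed map
$$
\underset{\mathcal{P}(\Gamma)^{op}}{\hocolim} \;\; (\Phi^{op})^*F \overset{\sim}{\longrightarrow} \underset{U^{op}/\Gamma}{\hocolim} \;\; F,
$$
since $(\Phi^{op})^* F = F \circ \Phi^{op}$ is the restriction along $\Phi^{op}$.

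I expect the only real obstacle to be bookkeeping rather than mathematics: one must be scrupulous about the initial/final and limit/colimit dictionary so that the opposite categories line up correctly, and one must confirm that the cofinality theorem is available for the equivariant target $\Top_{G \times H}$. The latter is harmless here, since weak equivalences of $(G \times H)$-spaces are detected after the Borel construction and homotopy colimits are computed entrywise; the cofinality theorem is a statement about the shape category and transports to any such model category unchanged. The one small fact licensing the op-translation is that contractibility is invariant under passage to the opposite category, as the nerves of $\mathcal{C}$ and $\mathcal{C}^{op}$ have homeomorphic geometric realizations.
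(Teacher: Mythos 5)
Your proof is correct and takes essentially the same route as the paper: the paper's entire proof is the one-liner ``Since $\Phi$ is homotopy initial, $\Phi^{op}$ is homotopy terminal,'' which is precisely the op-duality-plus-cofinality argument you spell out. Your careful unwinding of the identification $(\zeta/\Phi^{op}) \cong (\Phi/\zeta)^{op}$ and the passage from terminal objects to contractible slices is exactly the bookkeeping the paper leaves implicit.
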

\begin{proof}
Since $\Phi$ is homotopy initial, $\Phi^{op}$ is homotopy terminal.
\end{proof}
\begin{rem}
Corollary \ref{cor:hocolim_restrict} also holds for ordinary colimits because homotopy initial implies initial.
Indeed, the first condition requires contractible comma categories, while the second condition only requires connected comma categories.
\end{rem}

\begin{prop}
For all $\Gamma \in \GI$, the comparison map
$$
\alpha_\Gamma \colon [\mathbb{L}(U^{op})_! \Pi](\Gamma) \to [(U^{op})_! \Pi](\Gamma)
$$
is a weak equivalence.
\end{prop}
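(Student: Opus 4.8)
The plan is to reduce the comparison map $\alpha_\Gamma$ to a question about an honest open cover, and then to invoke the Dugger--Isaksen theorem. First I would apply Corollary~\ref{cor:hocolim_restrict}, which lets me compute the homotopy colimit defining $[\mathbb{L}(U^{op})_!\Pi](\Gamma)$ by restricting along $\Phi^{op}$ to the poset $\mathcal{P}(\Gamma)^{op}$; by the remark following that corollary, the ordinary colimit defining $[(U^{op})_!\Pi](\Gamma)$ restricts to $\mathcal{P}(\Gamma)^{op}$ in the same way. Thus $\alpha_\Gamma$ is identified with the natural comparison map
$$
\hocolim_{\mathcal{P}(\Gamma)^{op}} (\Phi^{op})^*F \longrightarrow \colim_{\mathcal{P}(\Gamma)^{op}} (\Phi^{op})^*F,
$$
where $(\Phi^{op})^*F$ is the diagram $(\Gamma', \Gamma'') \mapsto \Conf(\Gamma', X) \times \Conf(\Gamma'', Y)$ on $\mathcal{P}(\Gamma)^{op}$.

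Next I would identify this diagram with a diagram of open subspaces. By the homeomorphism $\mathcal{U}_{\Gamma', \Gamma''} \cong \Conf(\Gamma', X) \times \Conf(\Gamma'', Y)$ of Lemma~\ref{lem:split}, the value at $(\Gamma', \Gamma'')$ is the open set $\mathcal{U}_{\Gamma', \Gamma''} \subseteq (X \times Y)^n$, and the structure maps are the inclusions arising from $\Gamma_1' \subseteq \Gamma_2'$ and $\Gamma_1'' \subseteq \Gamma_2''$. Each $\mathcal{U}_{\Gamma', \Gamma''}$ with $U(\Gamma', \Gamma'') = \Gamma$ sits inside $W := \Conf(\Gamma, X\times Y)$, and by Lemma~\ref{lem:cover} these opens cover $W$. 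Moreover, Lemma~\ref{lem:intersections} shows that the family is closed under intersection, with the intersection of two members again indexed by a point of $\mathcal{P}(\Gamma)$, namely the join in the poset. Because the $\mathcal{U}_{\Gamma', \Gamma''}$ are open (the standing Hausdorff hypothesis) and cover $W$, the space $W$ carries the final topology with respect to their inclusions, so the colimit on the right-hand side is canonically $W$ itself.

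It then remains to show that the left-hand homotopy colimit maps by a weak equivalence onto $W$. This is exactly the content of the Dugger--Isaksen theorem on homotopy colimits of open covers: for an open cover indexed by a poset that records its intersections as joins, the natural map from the homotopy colimit of the diagram of opens to their union is a weak equivalence. Our poset $\mathcal{P}(\Gamma)^{op}$ and diagram $(\Phi^{op})^*F$ satisfy these hypotheses precisely because of Lemmas~\ref{lem:cover} and~\ref{lem:intersections}. Under the identification $\colim = W$, this Dugger--Isaksen equivalence is the map $\alpha_\Gamma$, finishing the argument on underlying spaces.

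Finally I would address equivariance: the target is $\Top_{G\times H}$, but weak equivalences there are detected on underlying spaces, and the forgetful functor $\Top_{G\times H} \to \Top$ preserves colimits and the bar constructions computing homotopy colimits (it is a left adjoint, its right adjoint being coinduction), carrying $\alpha_\Gamma$ to the underlying comparison map. Hence checking the statement on underlying spaces, as above, suffices. The main obstacle I anticipate is lining up the precise hypotheses of the Dugger--Isaksen theorem with our poset-indexed cover --- in particular confirming that $\mathcal{P}(\Gamma)^{op}$ records exactly the intersection structure of the cover, so that their statement applies verbatim rather than requiring an additional cofinality or thickening argument.
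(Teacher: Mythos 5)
Your proposal is correct and follows essentially the same route as the paper: restrict along $\Phi^{op}$ to $\mathcal{P}(\Gamma)^{op}$ via Corollary~\ref{cor:hocolim_restrict} (and its remark for ordinary colimits), identify the restricted diagram with the open cover of $\Conf(\Gamma, X \times Y)$ using Lemmas~\ref{lem:split}, \ref{lem:intersections}, and~\ref{lem:cover}, observe that the colimit is the union, and conclude by the Dugger--Isaksen theorem \cite[Corollary 3.3]{DI04}. Your additional remarks on equivariance and on matching the hypotheses of Dugger--Isaksen are sound but not needed beyond what the paper's argument already establishes.
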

\begin{proof}
By Corollary \ref{cor:hocolim_restrict}, it is the same to show that the comparison map
$$
\underset{\mathcal{P}(\Gamma)^{op}}{\hocolim} \;\; (\Phi^{op})\hspace{-1pt}^*F \longrightarrow \underset{\mathcal{P}(\Gamma)^{op}}{\colim} \;\; (\Phi^{op})\hspace{-1pt}^*F
$$
is a weak equivalence, where $F$ is the composite
$$
(U^{op}/\Gamma) \longrightarrow \GI_2^{op} \overset{\Pi}{\longrightarrow} \Top_{G \times H}
$$
which is given on objects of $(U^{op}/\Gamma) = (\Gamma/U)^{op}$ by the formula
$$
F(\Gamma \to U(\Gamma', \Gamma'') \, ; \; (\Gamma', \Gamma'')) = \Conf(\Gamma', X) \times \Conf(\Gamma'', Y).
$$
The restriction $(\Phi^{op})\hspace{-1pt}^*F$, therefore, is given by
$$
(\Gamma', \Gamma'') \mapsto \Conf(\Gamma', X) \times \Conf(\Gamma'', Y).
$$
By Lemmas \ref{lem:intersections} and \ref{lem:cover}, this functor describes the poset of overlaps of an open cover of $\Conf(\Gamma, X \times Y)$.  
Since the union of the cover is the whole space,
$$
\underset{\mathcal{P}(\Gamma)^{op}}{\colim} \;\; (\Phi^{op})\hspace{-1pt}^*F \;\; \cong \;\;\Conf(\Gamma, X \times Y).
$$
We conclude that the map $\alpha_\Gamma$ is a weak equivalence by \cite[Corollary 3.3]{DI04}, a result of Dugger-Isaksen saying that a topological space is weak equivalent to the homotopy colimit of any open cover.
\end{proof}

In the next section, we perform a similar analysis to accommodate morphisms from $\GI$ as well as objects.  

\subsection{Construction of the map $\beta$}
We give a formula for the components of $\beta$, and then verify that these components give a natural isomorphism.  Let $\psi \colon \Gamma \to U(\Gamma', \Gamma'')$ be a graph inclusion, and define a map
$$
B_{\psi} \colon \Conf(\Gamma', X) \times \Conf(\Gamma'', Y) \longrightarrow \Conf(\Gamma, X \times Y)
$$
by the formula
$$
((x_1, \ldots, x_m), (y_1, \ldots, y_m)) \mapsto ((x_{\psi(1)}, y_{\psi(1)}), \ldots, (x_{\psi(n)}, y_{\psi(n)}))
$$
where we have assumed for notational convenience that the vertices of $\Gamma$ are named $\{1, \ldots, n\}$ and the vertices of $\Gamma'$ and $\Gamma''$ are named $\{1, \ldots, m\}$.  This map manifestly lands in $(X \times Y)^n$, but its image actually lies in graphical configuration space.  Every edge $i \sim j$ of $\Gamma$ is sent to an edge $\psi(i) \sim \psi(j)$, and this edge must have been contributed by either $\Gamma'$ or $\Gamma''$, since it is present in the union.  Either way, the ordered pairs $(x_{\psi(i)}, y_{\psi(i)})$ and $(x_{\psi(j)}, y_{\psi(j)})$ will be distinct.  For example, if $\psi(i) \sim_{\Gamma'} \psi(j)$, then $x_{\psi(i)} \neq x_{\psi(j)}$, and so the ordered pairs differ in their first coordinate.  In the other case, the ordered pairs differ in their second coordinate.

For each graph $\Gamma$, write $B_{\Gamma}$ for the coproduct of the various maps $B_{\psi}$.
$$
B_{\Gamma} \colon \left[\coprod_{\substack{(\Gamma', \Gamma'') \in \GI_2 \\ \psi \colon \Gamma \to U(\Gamma', \Gamma'')}} \Conf(\Gamma', X) \times \Conf(\Gamma'', Y) \right] \; \xrightarrow{\coprod_{\psi} B_{\psi}} \Conf(\Gamma, X \times Y).
$$
Both the source and target of this map depend on $\Gamma$ contravariantly.  We introduce notation for the source considered as a functor of $\Gamma$:
$$
\begin{array}{rcccl}
P & \colon & \GI^{op} &\to& \Top_{G \times H} \\
&& \Gamma &\mapsto& \left[\coprod_{(\Gamma', \Gamma''), \psi} \Conf(\Gamma', X) \times \Conf(\Gamma'', Y) \right].
\end{array}
$$


We define a non-continuous function 
$$
\kappa_\Gamma \colon \Conf(\Gamma, X \times Y) \to P\Gamma
$$
and say in Proposition \ref{prop:section} that it is a section for the map $B_\Gamma$.  
To define the function, suppose that $q=((x_1, y_1), \ldots, (x_n, y_n)) \in \Conf(\Gamma, X \times Y)$.  Define two graphs $\Gamma'$ and $\Gamma''$ on the nodes $\{1,\ldots, n\}$ by the graphical relations
\begin{align*}
i \;\sim_{\Gamma'\phantom{'}} j \;\;\;\;\;&\Longleftrightarrow \;\;\;\;\; x_i \neq x_j \\
i \;\sim_{\Gamma''} j \;\;\;\;\;&\Longleftrightarrow \;\;\;\;\; y_i \neq y_j,
\end{align*}
and set
$$
\kappa_\Gamma(q) = ((x_1, \ldots, x_n), (y_1, \ldots, y_n))_{(\Gamma', \Gamma''),1_{\{1,\ldots,n\}}}
$$
where the subscript indicates the component of the coproduct.
\begin{prop} \label{prop:section}
For every $\Gamma$, we have $1_{\Conf(\Gamma, X \times Y)} = B_\Gamma \circ \kappa_\Gamma$.
\end{prop}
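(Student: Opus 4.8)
The plan is to show that $\kappa_\Gamma$ is a genuine right inverse of $B_\Gamma$ by unwinding the definitions of both maps and evaluating the composite on an arbitrary configuration $q = ((x_1, y_1), \ldots, (x_n, y_n)) \in \Conf(\Gamma, X \times Y)$. Since $\kappa_\Gamma$ lands in one specific component of the coproduct $P\Gamma$ --- the one indexed by the pair $(\Gamma', \Gamma'')$ built from the coincidence patterns of the coordinates of $q$, together with the identity injection $\psi = 1_{\{1,\ldots,n\}}$ --- the verification reduces to checking that the summand $B_\psi$ attached to that component returns $q$.

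First I would confirm that $\kappa_\Gamma(q)$ really is a well-defined element of $P\Gamma$, which has two parts. The identity $1_{\{1,\ldots,n\}}$ must be a legitimate graph injection $\Gamma \to U(\Gamma', \Gamma'')$; equivalently, every edge $i \sim_\Gamma j$ must persist in the union $\Gamma' \cup \Gamma''$. This is exactly where the hypothesis $q \in \Conf(\Gamma, X \times Y)$ enters: adjacency in $\Gamma$ forces $(x_i, y_i) \neq (x_j, y_j)$, hence $x_i \neq x_j$ or $y_i \neq y_j$, i.e. $i \sim_{\Gamma'} j$ or $i \sim_{\Gamma''} j$ by the defining equivalences for $\Gamma'$ and $\Gamma''$. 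Second, the tuple $((x_1, \ldots, x_n), (y_1, \ldots, y_n))$ must lie in $\Conf(\Gamma', X) \times \Conf(\Gamma'', Y)$; this is immediate (and uses only one direction of the biconditionals), since $i \sim_{\Gamma'} j$ means $x_i \neq x_j$ by construction, and likewise for $\Gamma''$.

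Having placed $\kappa_\Gamma(q)$ in the correct summand, I would apply $B_\Gamma$, which on this summand is the single map $B_\psi$ with $\psi = 1_{\{1,\ldots,n\}}$. Because $\Gamma'$ and $\Gamma''$ share the vertex set $\{1,\ldots,n\}$ of $\Gamma$, the index $m$ equals $n$ and $\psi$ is the identity, so the formula for $B_\psi$ reads $((x_k)_k, (y_k)_k) \mapsto ((x_{\psi(k)}, y_{\psi(k)}))_k = ((x_k, y_k))_k = q$. This yields $B_\Gamma(\kappa_\Gamma(q)) = q$, as required.

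I expect no real obstacle here: the only point demanding care is the first well-definedness check, where the configuration hypothesis must be invoked to license the identity as the index $\psi$; everything else is formal bookkeeping with the coproduct index and the defining relations of $\Gamma'$ and $\Gamma''$. I would also remark in passing that $\kappa_\Gamma$ is manifestly discontinuous --- the graphs $\Gamma', \Gamma''$ jump as coordinates collide --- which is why the statement is phrased at the level of functions only; this causes no difficulty for the identity $B_\Gamma \circ \kappa_\Gamma = 1_{\Conf(\Gamma, X \times Y)}$.
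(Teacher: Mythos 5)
Your proof is correct and is exactly the verification the paper has in mind: the paper's own proof reads ``Immediate from the formulas for $B_\Gamma$ and $\kappa_\Gamma$,'' and you have simply written out that immediacy, including the one genuinely non-vacuous point (that the configuration hypothesis on $q$ is what makes $1_{\{1,\ldots,n\}}$ a valid graph injection $\Gamma \to U(\Gamma', \Gamma'')$, so $\kappa_\Gamma(q)$ lies in a legitimate component of the coproduct $P\Gamma$). Nothing is missing and nothing differs in substance from the paper's argument.
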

\begin{proof}
Immediate from the formulas for $B_\Gamma$ and $\kappa_\Gamma$.
\end{proof}

\begin{prop} \label{prop:commutes}
For every graph injection $\varphi \colon \Gamma_1 \to \Gamma_2$, we have 
$$
B_{\Gamma_1} \circ P\varphi = \Conf(\varphi, X \times Y) \circ B_{\Gamma_2}.
$$
\end{prop}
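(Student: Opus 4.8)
The plan is to recognize Proposition~\ref{prop:commutes} as the assertion that the maps $B_\Gamma$ are the components of a natural transformation $P \Rightarrow \Conf(-, X\times Y)$ of functors $\GI^{op} \to \Top_{G\times H}$. Since $P\Gamma$ is a coproduct, it suffices to check the stated equality on an arbitrary point lying in a single summand, and since both composites are built by reindexing coordinates, the check reduces to comparing the underlying functions. First I would fix notation for the three vertex sets in play: name the vertices of $\Gamma_1$ by $\{1,\ldots,n_1\}$, those of $\Gamma_2$ by $\{1,\ldots,n_2\}$, and the shared vertex set of a pair $(\Gamma', \Gamma'') \in \GI_2$ by $\{1,\ldots,m\}$, so that $\varphi$ has underlying function $\{1,\ldots,n_1\}\to\{1,\ldots,n_2\}$ and a relevant graph injection $\psi\colon \Gamma_2 \to U(\Gamma', \Gamma'')$ has underlying function $\{1,\ldots,n_2\}\to\{1,\ldots,m\}$.

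Next I would spell out the action of $P\varphi$. The functor $P$ is contravariant by precomposition on the comma categories $\Gamma/U$: an object of $\Gamma_2/U$ is a pair $((\Gamma', \Gamma''); \psi\colon \Gamma_2 \to U(\Gamma', \Gamma''))$, and $\varphi$ induces the reindexing $\psi \mapsto \psi\circ\varphi$ carrying $\Gamma_2/U$ to $\Gamma_1/U$. Because $\Pi(\Gamma', \Gamma'') = \Conf(\Gamma', X)\times \Conf(\Gamma'', Y)$ depends only on $(\Gamma', \Gamma'')$ and not on $\psi$, the map $P\varphi$ acts as the identity on the content $\Conf(\Gamma', X)\times \Conf(\Gamma'', Y)$ and merely relabels the summand indexed by $((\Gamma', \Gamma''), \psi)$ as the summand indexed by $((\Gamma', \Gamma''), \psi\circ\varphi)$.

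With this in hand the verification is a direct computation on a point $p = ((x_1,\ldots,x_m),(y_1,\ldots,y_m))$ in the $((\Gamma', \Gamma''), \psi)$-summand of $P\Gamma_2$. Along the left-hand composite, $P\varphi$ sends $p$ unchanged into the $((\Gamma', \Gamma''), \psi\circ\varphi)$-summand of $P\Gamma_1$, and then $B_{\Gamma_1}$ applies $B_{\psi\circ\varphi}$, producing the tuple whose $k$-th coordinate is $(x_{\psi(\varphi(k))}, y_{\psi(\varphi(k))})$. Along the right-hand composite, $B_{\Gamma_2}$ applies $B_\psi$ to obtain $(w_1,\ldots,w_{n_2})$ with $w_j = (x_{\psi(j)}, y_{\psi(j)})$, and then $\Conf(\varphi, X\times Y)$ relabels this to the tuple whose $k$-th coordinate is $w_{\varphi(k)} = (x_{\psi(\varphi(k))}, y_{\psi(\varphi(k))})$. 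The two tuples agree by associativity of composition, namely $(\psi\circ\varphi)(k) = \psi(\varphi(k))$, which is exactly the crux of the argument.

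The hard part, such as it is, is purely bookkeeping: keeping the three vertex sets and the two functoriality conventions straight, and in particular matching the component-reindexing performed by $P\varphi$ against the relabeling built into $\Conf(\varphi, -)$. There is no genuine topological or homotopical difficulty, since the claim is an on-the-nose equality of continuous maps with no boundary or coherence terms; the $G\times H$-equivariance needed to place the equality in $\Top_{G\times H}$ is inherited automatically from the defining maps $B_\psi$ and $\Conf(\varphi, X\times Y)$, each of which is built from coordinate reindexing and is therefore equivariant for the diagonal actions.
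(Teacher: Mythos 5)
Your proposal is correct and matches the paper's own proof essentially step for step: both verify the equality on an arbitrary point of a single summand of the coproduct $P\Gamma_2$, using that $P\varphi$ merely relabels the summand index $\psi \mapsto \psi\circ\varphi$ while leaving the point unchanged, so that both composites reindex coordinates by $\psi\circ\varphi$ and agree by associativity of composition. The additional remarks you make on equivariance and on the bookkeeping of vertex sets are harmless elaborations of what the paper leaves implicit.
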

\begin{proof}
Suppose $(\Gamma', \Gamma'') \in \GI_2$, and $\psi \colon \Gamma_2 \to U(\Gamma', \Gamma'')$ is a $\GI$-morphism.  Let 
$$
p = ((x_1, \ldots, x_m), (y_1, \ldots, y_m))_{(\Gamma', \Gamma''), \psi} \in P\Gamma
$$
be a point, where the subscript indicates which component of the coproduct.  Now check
\begin{align*}
(B_{\Gamma_1} \circ P\varphi)(p) &= B_{\Gamma_1}((x_1, \ldots, x_m), (y_1, \ldots, y_m))_{(\Gamma', \Gamma''), \psi \circ \varphi}\\
&= B_{\psi \circ \varphi}((x_1, \ldots, x_m), (y_1, \ldots, y_m))\\
&= \left(\left(x_{(\psi \circ \varphi)(1)}, y_{(\psi \circ \varphi)(1)}\right), \ldots, \left(x_{(\psi \circ \varphi)(n)}, y_{(\psi \circ \varphi)(n)}\right)\right) \\
&= \left(\left(x_{\psi( \varphi(1))}, y_{\psi( \varphi(1))}\right), \ldots, \left(x_{\psi( \varphi(n))}, y_{\psi( \varphi(n))}\right)\right)\\
&= \Conf(\varphi, X \times Y)\left(\left(x_{\psi( 1)}, y_{\psi( 1)}\right), \ldots, \left(x_{\psi( m)}, y_{\psi( m)}\right)\right)\\
&= \left(\Conf(\varphi, X \times Y) \circ B_{\psi}\right)((x_1, \ldots, x_m), (y_1, \ldots, y_m))\\
&= \left(\Conf(\varphi, X \times Y) \circ B_{\Gamma_2}\right)(p).
\end{align*}
\end{proof}
\noindent
Recall that every $\GI_2$ morphism $\varphi \colon (\Gamma'_1, \Gamma''_1) \to (\Gamma'_2, \Gamma''_2)$ induces a continuous map
$$
\Pi \varphi \colon \Conf(\Gamma'_2, X) \times \Conf(\Gamma''_2, Y) \to \Conf(\Gamma'_1, X) \times \Conf(\Gamma''_1, Y).
$$

\begin{prop} \label{prop:descends}
For every graph $\Gamma \in \GI$ and graph injection $\psi \colon \Gamma \to U(\Gamma'_1, \Gamma''_1) $, we have
$$
(B_\psi) \circ (\Pi \varphi) = B_{(U\varphi) \circ \psi}.
$$
\end{prop}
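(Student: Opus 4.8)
The plan is to prove the identity by direct evaluation on points, exactly as in the proof of Proposition~\ref{prop:commutes}. Both sides are continuous maps $\Conf(\Gamma_2', X) \times \Conf(\Gamma_2'', Y) \to \Conf(\Gamma, X \times Y)$, so once I confirm the domains and codomains agree it suffices to check equality on an arbitrary point. First I would fix notation for the three vertex sets: say $\Gamma$ has nodes $\{1, \ldots, n\}$, the shared vertex set of the pair $(\Gamma_1', \Gamma_1'')$ is $\{1, \ldots, m\}$, and the shared vertex set of $(\Gamma_2', \Gamma_2'')$ is $\{1, \ldots, \ell\}$, writing $\varphi$ also for the underlying injection $\{1, \ldots, m\} \hookrightarrow \{1, \ldots, \ell\}$ of the $\GI_2$-morphism, and $\psi$ for the underlying injection $\{1, \ldots, n\} \to \{1, \ldots, m\}$.

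The one structural fact to record before computing is that the union-the-edges functor $U$ is the identity on underlying vertex sets by construction, so the underlying injection of $U\varphi \colon U(\Gamma_1', \Gamma_1'') \to U(\Gamma_2', \Gamma_2'')$ is again $\varphi$. Consequently the composite $(U\varphi) \circ \psi$ is a legitimate $\GI$-morphism $\Gamma \to U(\Gamma_2', \Gamma_2'')$, so that $B_{(U\varphi)\circ\psi}$ is even defined, and its underlying injection is the set-level composite $\varphi \circ \psi \colon \{1, \ldots, n\} \to \{1, \ldots, \ell\}$.

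With this in hand the verification is a one-line chase on a point $p = ((x_1, \ldots, x_\ell), (y_1, \ldots, y_\ell))$. The map $\Pi\varphi$ relabels $p$ to $((x_{\varphi(1)}, \ldots, x_{\varphi(m)}), (y_{\varphi(1)}, \ldots, y_{\varphi(m)}))$, after which $B_\psi$ produces the tuple whose $i$-th entry is $(x_{\varphi(\psi(i))}, y_{\varphi(\psi(i))})$; on the other hand $B_{(U\varphi)\circ\psi}(p)$ has $i$-th entry $(x_{(\varphi\circ\psi)(i)}, y_{(\varphi\circ\psi)(i)})$. These coincide termwise by associativity of composition. I expect the only obstacle to be the bookkeeping with the three vertex sets together with the observation that the underlying function of $U\varphi$ is genuinely $\varphi$ and not some relabeled variant; once that point is pinned down, nothing substantive remains, and the proof parallels Proposition~\ref{prop:commutes} almost verbatim.
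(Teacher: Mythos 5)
Your proposal is correct and follows essentially the same route as the paper: fix the three vertex sets, evaluate both composites on a general point $((x_1,\ldots,x_\ell),(y_1,\ldots,y_\ell))$, and observe that both produce the tuple with $i$-th entry $\left(x_{(\varphi\circ\psi)(i)}, y_{(\varphi\circ\psi)(i)}\right)$. Your explicit remark that $U$ acts as the identity on underlying vertex sets, so that $U\varphi$ has underlying injection $\varphi$, is exactly the point the paper handles implicitly by conflating each $\GI$-morphism with its underlying vertex injection.
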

\begin{proof}
For notational simplicity, suppose the vertex set of $\Gamma$ is $\{1, \ldots, n\}$, the vertex set of $\Gamma'_1$ and $\Gamma''_1$ is $\{1, \ldots, m\}$, and the vertex set of $\Gamma'_2$ and $\Gamma''_2$ is $\{1, \ldots, l\}$.

For a general point $((x_1, \ldots, x_l), (y_1, \ldots, y_l))$ of $\Conf(\Gamma'_2, X) \times \Conf(\Gamma''_2, Y)$,
\begin{align*}
((B_\psi) \circ (\Pi \varphi)) ((x_1, \ldots, x_l), (y_1, \ldots, y_l)) &= \left(B_\psi\right) \left(\left(x_{\varphi(1)}, \ldots, x_{\varphi(m)}\right), \left(y_{\varphi(1)}, \ldots, y_{\varphi(m)}\right)\right) \\
&= \left(\left(x_{\varphi(\psi(1))}, y_{\varphi(\psi(1))}\right), \ldots, \left(x_{\varphi(\psi(n))}, y_{\varphi(\psi(n))}\right)\right) \\
&= \left(\left(x_{(\varphi\circ\psi)(1)}, y_{(\varphi\circ\psi)(1)}\right), \ldots, \left(x_{(\varphi\circ\psi)(n)}, y_{(\varphi\circ\psi)(n)}\right)\right) \\
&= \left(B_{(U\varphi) \circ \psi}\right) ((x_{1}, \ldots, x_{l}), (y_{1}, \ldots, y_{l})),
\end{align*}
where we have written $\psi$ for the $\GI$ morphism as well as its underlying vertex injection, and similarly for $\varphi$.
\end{proof}

\begin{prop} \label{prop:standardize}
For every $p \in P\Gamma$, there is a $\GI_2$ morphism $\varphi_p \colon (\Gamma_1', \Gamma_1'') \to (\Gamma_2', \Gamma_2'')$ so
$$
(\Pi \varphi_p)(p) = (\kappa_\Gamma \circ B_\Gamma)(p).
$$
\end{prop}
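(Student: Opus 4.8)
The plan is to read $\varphi_p$ directly off the combinatorial data attached to $p$, and then to verify the asserted identity by the same sort of coordinate bookkeeping already carried out in Propositions \ref{prop:commutes} and \ref{prop:descends}. First I would unwind both sides. Writing $p = ((x_1, \ldots, x_m), (y_1, \ldots, y_m))$ in the component of $P\Gamma$ indexed by a pair $(\Gamma_2', \Gamma_2'')$ together with an injection $\psi \colon \Gamma \to U(\Gamma_2', \Gamma_2'')$, the definition of $B_\Gamma$ gives $B_\Gamma(p) = B_\psi(p) = ((x_{\psi(1)}, y_{\psi(1)}), \ldots, (x_{\psi(n)}, y_{\psi(n)}))$. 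Feeding this into the formula for $\kappa_\Gamma$, the point $(\kappa_\Gamma \circ B_\Gamma)(p)$ lands in the component indexed by $((\Gamma_1', \Gamma_1''), 1)$, where $\Gamma_1', \Gamma_1''$ are the distinctness graphs on $\{1, \ldots, n\}$ determined by $i \sim_{\Gamma_1'} j \iff x_{\psi(i)} \neq x_{\psi(j)}$ and $i \sim_{\Gamma_1''} j \iff y_{\psi(i)} \neq y_{\psi(j)}$, and whose underlying pair of tuples is $((x_{\psi(1)}, \ldots, x_{\psi(n)}), (y_{\psi(1)}, \ldots, y_{\psi(n)}))$.

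Next I would take $\varphi_p$ to be the $\GI_2$ morphism whose underlying vertex injection is $\psi$. The real content is checking that $\psi$ genuinely defines a morphism between the relevant pairs, i.e.\ that it carries each graphical relation into the other separately. This is exactly where membership $p \in \Conf(\Gamma_2', X) \times \Conf(\Gamma_2'', Y)$ is used: an edge of a component graph forces a coordinate inequality, and hence the corresponding edge of a distinctness graph. Phrased in terms of inclusions, the component graphs pull back along $\psi$ into the distinctness graphs $\Gamma_1', \Gamma_1''$, and this is precisely the relation-preservation that makes $\varphi_p$ legitimate.

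Granting that $\varphi_p$ is well defined, the identity $(\Pi \varphi_p)(p) = (\kappa_\Gamma \circ B_\Gamma)(p)$ is then immediate: by the description of $\Pi$ on morphisms, $\Pi \varphi_p$ acts on coordinates by precomposition with $\psi$, sending $((x_1, \ldots, x_m), (y_1, \ldots, y_m))$ to $((x_{\psi(1)}, \ldots, x_{\psi(n)}), (y_{\psi(1)}, \ldots, y_{\psi(n)}))$, which is visibly the underlying pair of tuples computed for $(\kappa_\Gamma \circ B_\Gamma)(p)$ in the first step. This is the same coordinate chase as in Proposition \ref{prop:descends}, now bootstrapped off the explicit formula for $\kappa_\Gamma$ recorded before Proposition \ref{prop:section}.

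The main obstacle is the middle step: making $\varphi_p$ into a bona fide $\GI_2$ morphism and getting the orientation of the graph inclusions right. Two points are delicate. First, the direction must be handled with care, since the component graphs $\Gamma_2', \Gamma_2''$ carry \emph{at most} the edges forced by the coordinates of $p$, whereas the distinctness graphs $\Gamma_1', \Gamma_1''$ record \emph{all} such inequalities; one must be sure $\varphi_p$ is set up so that relations are preserved rather than reflected. Second, when $\psi$ is not surjective the graphs $\Gamma_2', \Gamma_2''$ have vertices outside its image, so I expect to first restrict $p$ along $\psi$ to the pullback pair on $\{1, \ldots, n\}$ --- an identification that becomes invisible after passing to the homotopy left Kan extension --- and only then compare with the distinctness graphs. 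Once the morphism is correctly oriented and the extra vertices are absorbed in this way, the verifying computation is routine.
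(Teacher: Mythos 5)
Your unwinding of both sides and your choice $\varphi_p = \psi$ coincide exactly with the paper's proof, but the verification you supply for the key step proves the wrong implication, and the difficulty you flag as ``delicate'' is a genuine obstruction rather than a bookkeeping issue. What you check is that edges of the component graphs pull back into the distinctness graphs: $\psi(i) \sim_{\Gamma_2'} \psi(j)$ forces $x_{\psi(i)} \neq x_{\psi(j)}$, i.e.\ $i \sim_{\Gamma_1'} j$, so $\psi^*\Gamma_2' \subseteq \Gamma_1'$ and $\psi^*\Gamma_2'' \subseteq \Gamma_1''$. But for $\psi$ to be a $\GI_2$ morphism $(\Gamma_1', \Gamma_1'') \to (\Gamma_2', \Gamma_2'')$ it must \emph{preserve} each relation, which is the opposite inclusion $\Gamma_1' \subseteq \psi^*\Gamma_2'$, $\Gamma_1'' \subseteq \psi^*\Gamma_2''$; this fails whenever a coordinate inequality holds without being forced by an edge. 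Concretely, let $\Gamma$ be the one-edge graph on $\{1,2\}$, and let $p = ((x_1,x_2),(y_1,y_2))$ with $x_1 \neq x_2$ and $y_1 \neq y_2$ sit in the component indexed by $\Gamma_2' = \Gamma$, $\Gamma_2''$ edgeless, $\psi = \mathrm{id}$; this is a legitimate point of $P\Gamma$ because $\Conf(\Gamma_2'', Y) = Y^2$. Then $\Gamma_1''$ is the one-edge graph, and \emph{no} $\GI_2$ morphism $(\Gamma_1', \Gamma_1'') \to (\Gamma_2', \Gamma_2'')$ exists at all, since the edge of $\Gamma_1''$ has no edge of $\Gamma_2''$ to land on. Nor is reversing the arrow a general fix: when $\psi$ is not surjective, $\GI$ morphisms being injections rules out any map from the $m$-vertex pair back to the $n$-vertex pair. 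So no ``correct orientation'' exists, and Proposition \ref{prop:standardize} as literally stated fails for this $p$. (You are in good company: the paper's own proof just declares ``we may set $\varphi_p = \psi$,'' and its description of $\Gamma_1', \Gamma_1''$ as containing as few edges as possible even contradicts the definition of $\kappa_\Gamma$, which puts in an edge for every realized inequality.)

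What is true---and is all that the subsequent proof that $\beta$ is a natural isomorphism actually uses---is the weaker claim that $p$ and $(\kappa_\Gamma \circ B_\Gamma)(p)$ have the same image under $Q_\Gamma$, i.e.\ are identified by the equivalence relation \emph{generated} by $q \sim (\Pi\varphi)(q)$. Your second ``delicate point'' is the germ of the correct argument, provided you accept a zigzag of two generating relations in place of one morphism. First, $\psi$ \emph{is} a legitimate $\GI_2$ morphism out of the pullback pair, $(\psi^*\Gamma_2', \psi^*\Gamma_2'') \to (\Gamma_2', \Gamma_2'')$, and it lies over $\Gamma$ (every edge of $\Gamma$ maps into $\Gamma_2'$ or $\Gamma_2''$, hence is pulled back into $\psi^*\Gamma_2'$ or $\psi^*\Gamma_2''$); this identifies $p$ with $q := (\Pi\psi)(p)$, the restricted tuple in the component of $(\psi^*\Gamma_2', \psi^*\Gamma_2'')$ with identity labeling. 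Second, the inclusions you did verify, $\psi^*\Gamma_2' \subseteq \Gamma_1'$ and $\psi^*\Gamma_2'' \subseteq \Gamma_1''$, make the identity on vertices a $\GI_2$ morphism $(\psi^*\Gamma_2', \psi^*\Gamma_2'') \to (\Gamma_1', \Gamma_1'')$ over $\Gamma$, and applying $\Pi$ to it carries $(\kappa_\Gamma \circ B_\Gamma)(p)$ to the same point $q$. The zigzag $p \sim q \sim (\kappa_\Gamma \circ B_\Gamma)(p)$ then gives $Q_\Gamma(p) = Q_\Gamma((\kappa_\Gamma \circ B_\Gamma)(p))$, which is exactly what both invocations of Proposition \ref{prop:standardize} in the proof of naturality require.
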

\begin{proof}
Suppose
$$
p = ((x_1, \ldots, x_m), (y_1, \ldots, y_m))_{(\Gamma_2', \Gamma_2''), \psi},
$$
where the subscript indicates the component of the coproduct.  By the definition of $\kappa_\Gamma$, 
$$
(\kappa_\Gamma \circ B_\Gamma)(p) = ((x_{\psi(1)}, \ldots, x_{\psi(n)}), (y_{\psi(1)}, \ldots, y_{\psi(n)}))_{(\Gamma_1', \Gamma_1''), 1_{\{1, \ldots n\}}}
$$
where $\Gamma_1'$, and $\Gamma_1''$ contain as few edges as possible while still accommodating these two configurations.  It follows that we may set $\varphi_p = \psi$ to obtain the required equality.
\end{proof}
\noindent
By the colimit description of $[(U^{op})_!\Pi](\Gamma)$, this space is homeomorphic to the quotient
$$
P\Gamma / \hspace{-3pt} \sim \;\;\;\;\; \overset{\sim}{\longleftrightarrow} \;\;\;\;\; [(U^{op})_!\Pi](\Gamma)
$$
the equivalence relation $\sim$ is generated by identifications of the form
$$
p \sim (\Pi \varphi)(p)
$$
where $p \in P\Gamma$ is any point and $\varphi$ is any morphism of $U^{op} / \Gamma$.  By Proposition \ref{prop:descends}, the map $B_\Gamma$ factors uniquely through the quotient map
$$
P\Gamma \overset{Q_\Gamma}{\longrightarrow} [(U^{op})_!\Pi](\Gamma) \overset{\beta_\Gamma}{\longrightarrow} \Conf(\Gamma, X \times Y)
$$
defining a continuous map $\beta_\Gamma$ so that $B_\Gamma = \beta_\Gamma \circ Q_\Gamma$.
\begin{prop}
The morphisms $\beta_\Gamma$ form the components of a natural isomorphism.
\end{prop}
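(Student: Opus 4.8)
The plan is to prove two things for each object $\Gamma$: that the squares expressing naturality of $\beta$ commute, and that each component $\beta_\Gamma$ is a homeomorphism; equivariance for $G \times H$ will then be automatic, since every map in sight is assembled from equivariant pieces. Throughout I will use two structural facts established by the construction: the maps $Q_\Gamma \colon P\Gamma \to [(U^{op})_!\Pi](\Gamma)$ are the canonical quotient maps onto the colimit, hence surjective and natural in $\Gamma$, and $B_\Gamma = \beta_\Gamma \circ Q_\Gamma$.

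For naturality, fix a graph injection $\varphi \colon \Gamma_1 \to \Gamma_2$. Since $Q_{\Gamma_2}$ is surjective, it suffices to verify the square after precomposing with $Q_{\Gamma_2}$. On one side, $\Conf(\varphi, X \times Y) \circ \beta_{\Gamma_2} \circ Q_{\Gamma_2} = \Conf(\varphi, X \times Y) \circ B_{\Gamma_2}$; on the other, using naturality of $Q$ in the form $[(U^{op})_!\Pi]\varphi \circ Q_{\Gamma_2} = Q_{\Gamma_1} \circ P\varphi$, I get $\beta_{\Gamma_1} \circ Q_{\Gamma_1} \circ P\varphi = B_{\Gamma_1} \circ P\varphi$. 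These agree by Proposition~\ref{prop:commutes}, so the square commutes after precomposition with the epimorphism $Q_{\Gamma_2}$, and therefore commutes.

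For bijectivity I exploit the section $\kappa_\Gamma$. Surjectivity of $\beta_\Gamma$ is immediate, since $\beta_\Gamma \circ Q_\Gamma = B_\Gamma$ is surjective by the section identity $B_\Gamma \circ \kappa_\Gamma = 1$ of Proposition~\ref{prop:section}. For injectivity, Proposition~\ref{prop:standardize} produces, for each $p \in P\Gamma$, a morphism $\varphi_p$ of $U^{op}/\Gamma$ with $(\Pi\varphi_p)(p) = (\kappa_\Gamma \circ B_\Gamma)(p)$; because the defining relation of the quotient identifies $p$ with $(\Pi\varphi_p)(p)$, this yields $Q_\Gamma \circ \kappa_\Gamma \circ B_\Gamma = Q_\Gamma$. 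Combining this with $\beta_\Gamma \circ Q_\Gamma = B_\Gamma$ and $B_\Gamma \circ \kappa_\Gamma = 1$ shows that $Q_\Gamma \circ \kappa_\Gamma$ is a two-sided set-theoretic inverse to $\beta_\Gamma$, so $\beta_\Gamma$ is a bijection.

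The genuinely delicate point is the topology: the candidate inverse $Q_\Gamma \circ \kappa_\Gamma$ is built from the deliberately non-continuous section $\kappa_\Gamma$, so continuity of the inverse cannot be read off directly. Instead I will show $\beta_\Gamma$ is a quotient map, which for a continuous bijection gives a homeomorphism. The idea is to restrict $B_\Gamma$ to the clopen subspace $P^\circ\Gamma \subseteq P\Gamma$ spanned by the components indexed by pairs $(\Gamma', \Gamma'') \in \mathcal{P}(\Gamma)$ with structure map $\psi = \mathrm{id}$. On such a component, $B_\psi$ is — via the identification of Lemma~\ref{lem:split} — the inclusion of the open set $\mathcal{U}_{\Gamma', \Gamma''}$ into $\Conf(\Gamma, X \times Y)$, open precisely because $X$ and $Y$ are Hausdorff; hence $B_\Gamma|_{P^\circ\Gamma}$ is a disjoint union of open embeddings, and it is surjective by Lemma~\ref{lem:cover}. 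A surjective local homeomorphism is open, hence a quotient map. Applying the standard fact that a quotient map factoring through a continuous map forces that map to be a quotient map, first to $B_\Gamma|_{P^\circ\Gamma} = B_\Gamma \circ (\text{inclusion})$ shows $B_\Gamma$ is a quotient map, and then to $B_\Gamma = \beta_\Gamma \circ Q_\Gamma$ shows $\beta_\Gamma$ is a quotient map. Being a continuous bijection, $\beta_\Gamma$ is the desired homeomorphism, and it is $G \times H$-equivariant because $B_\Gamma$ and $Q_\Gamma$ are. The main obstacle throughout is exactly this last topological step; the algebraic bijection is formal, but the homeomorphism requires the Hausdorff hypothesis and the open-cover structure rather than any continuous inverse.
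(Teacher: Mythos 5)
Your proposal is correct, and its first two parts coincide with the paper's own argument: naturality is checked exactly as in the paper by precomposing with the epimorphism $Q_{\Gamma_2}$, invoking naturality of $Q$ and Proposition~\ref{prop:commutes}, and the set-theoretic inverse $Q_\Gamma \circ \kappa_\Gamma$ is produced from Propositions~\ref{prop:section} and~\ref{prop:standardize} just as in the text. Where you genuinely diverge is the final, topological step. The paper finishes by asserting that the composite $Q_\Gamma \circ \kappa_\Gamma$ is \emph{continuous} (because $Q_\Gamma$ collapses the fibers of $B_\Gamma$ and $\kappa_\Gamma$ is a section), so that $\beta_\Gamma$ has a continuous two-sided inverse; making that continuity claim fully precise requires checking that $Q_\Gamma \circ \kappa_\Gamma$ agrees locally, on each open set $\mathcal{U}_{\Gamma',\Gamma''}$ with $(\Gamma',\Gamma'') \in \mathcal{P}(\Gamma)$, with $Q_\Gamma$ composed with the inclusion of the corresponding coproduct component. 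You never address continuity of the inverse at all: instead you note that the restriction of $B_\Gamma$ to the components indexed by pairs in $\mathcal{P}(\Gamma)$ with $\psi = \mathrm{id}$ is a surjective local homeomorphism (surjectivity is Lemma~\ref{lem:cover}; the open-embedding property is where the Hausdorff hypothesis enters), hence open, hence a quotient map, and then apply twice the standard fact that if a composite $g \circ f$ is a quotient map then so is $g$ --- first to conclude $B_\Gamma$ is a quotient map, then to conclude $\beta_\Gamma$ is. A continuous bijective quotient map is a homeomorphism, so you are done. Both routes rest on the same geometric input, the open cover of \S\ref{sec:cover}, but yours trades the construction of a continuous inverse for general point-set facts about quotient maps; this makes the delicate step more self-contained and arguably more robust than the paper's rather terse continuity assertion, at the (small) cost of invoking the open-cover structure a second time rather than reusing only the formal identities already established.
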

\begin{proof}
First, we argue that the morphisms $\beta_\Gamma$ form the components of a natural transformation.  Let $\varphi \colon \Gamma_1 \to \Gamma_2$ be a graph injection.  Since $Q_{\Gamma_1} \circ P\varphi = [(U^{op})_!\Pi](\varphi) \circ Q_{\Gamma_2}$, we have by Proposition \ref{prop:commutes}, 
\begin{align*}
B_{\Gamma_1} \circ P\varphi &= \Conf(\varphi, X \times Y) \circ B_{\Gamma_2} \\
\beta_{\Gamma_1} \circ Q_{\Gamma_1}  \circ P\varphi &= \Conf(\varphi, X \times Y) \circ \beta_{\Gamma_2} \circ Q_{\Gamma_2} \\
\beta_{\Gamma_1} \circ [(U^{op})_!\Pi](\varphi) \circ Q_{\Gamma_2} &= \Conf(\varphi, X \times Y) \circ \beta_{\Gamma_2} \circ Q_{\Gamma_2} \\
\beta_{\Gamma_1} \circ [(U^{op})_!\Pi](\varphi) &= \Conf(\varphi, X \times Y) \circ \beta_{\Gamma_2}
\end{align*}
where the last cancellation comes from the universal property of the quotient map $Q_{\Gamma_2}$.

We show that $\beta$ is an isomorphism by producing inverses for its components.  Recall from Proposition \ref{prop:section} that
$$
\kappa \colon \Conf(\Gamma, X \times Y) \to P\Gamma
$$ 
provides a discontinuous section to the continuous function $B_\Gamma$.  However, by two applications of Proposition~\ref{prop:standardize}, if any two points $p, p' \in P\Gamma$ have $B_\Gamma(p) = B_\Gamma(p')$, then
$$
p \sim (\kappa_\Gamma \circ B_\Gamma)(p) = (\kappa_\Gamma \circ B_\Gamma)(p') \sim p',
$$
and so these points are identified in the quotient.  In other words, any two points in the same fiber of $B_\Gamma$ are identified in $[(U^{op})_!\Pi](\Gamma)$, and so the composite function
$$
\Conf(\Gamma, X \times Y) \overset{\kappa_\Gamma}{\longrightarrow} P\Gamma \overset{Q_\Gamma}{\longrightarrow} [(U^{op})_!\Pi](\Gamma)
$$ 
is continuous since $\kappa$ is a section.  Moreover,
\begin{align*}
Q_\Gamma &=  Q_\Gamma \circ \kappa_\Gamma \circ B_\Gamma \\
Q_\Gamma &= Q_\Gamma \circ \kappa_\Gamma \circ \beta_\Gamma \circ Q_\Gamma \\
1 &= (Q_\Gamma \circ \kappa_\Gamma) \circ \beta_\Gamma
\end{align*}
since $Q_\Gamma$ is epic, and directly
\begin{align*}
\beta_\Gamma \circ (Q_\Gamma \circ \kappa_\Gamma) &= (\beta_\Gamma \circ Q_\Gamma) \circ \kappa_\Gamma \\
&= B_\Gamma \circ \kappa_\Gamma \\
&= 1,
\end{align*}
and so the composite $(Q_\Gamma \circ \kappa_\Gamma)$ is a continuous two-sided inverse for $\beta_\Gamma$.
\end{proof}
\subsection{Proof of Theorem \ref{thm:pointwise}} \label{sec:pointwise_proof}
By Theorem \ref{thm:main}, we have a homotopy equivalence
$$
 \underset{U^{op}/\Gamma}{\hocolim} \;\; \left[(U^{op}/\Gamma) \longrightarrow \GI_2^{op} \overset{\Pi}{\longrightarrow} \Top_{G \times H} \right] \simeq \Conf(\Gamma, X \times Y),
$$
and moreover, this equivalence is functorial in the variable $\Gamma \in \GI$.  By first applying Proposition \ref{prop:initial}, and then Proposition \ref{prop:initiala}, we may replace the indexing category for the homotopy colimit with the poset $\mathcal{P}(\Gamma)$, and then the slice category $(U^{op}_n/\Gamma)$, obtaining
$$
 \underset{U^{op}_n/\Gamma}{\hocolim} \;\; \left[(U^{op}_n/\Gamma) \longrightarrow \mathcal{G}(n) \times \mathcal{G}(n) \xrightarrow{\Pi|_{\mathcal{G}(n) \times \mathcal{G}(n)}}\Top_{G \times H} \right] \simeq \Conf(\Gamma, X \times Y).
$$
The left-hand-side of this expression gives a formula for $\mathbb{L}(U^{op}_n)_! \Pi$, so we are done.
\bibliographystyle{amsalpha}
\bibliography{math}

\end{document}